\newcommand{\tP}{\tilde{P}}
\newtheorem{theorem}{Theorem}
\newtheorem{lemma}{Lemma}
\newtheorem{proposition}[lemma]{Proposition}
\newtheorem{corollary}[lemma]{Corollary}
\newtheorem{remark}[lemma]{Remark}
\newtheorem{conjecture}[theorem]{Conjecture}
\numberwithin{lemma}{section}
\numberwithin{equation}{section}
\newcommand{\R}{{\mathbb R}}
\newcommand{\sgn}{\mathop{\mathrm{sgn}}}
\newcommand{\tv}{\tilde{v}}
\newcommand{\tphi}{\tilde{\phi}}
\renewcommand{\L}{{L^{NL}}} 
\begin{document}

\title{Well-posedness and dispersive decay of small data solutions for the Benjamin-Ono equation}

\author{Mihaela Ifrim}
\address{Department of Mathematics, University of California at Berkeley}
\thanks{The first author was supported by the Simons Foundation}
\email{ifrim@math.berkeley.edu}

\author{ Daniel Tataru}
\address{Department of Mathematics, University of California at Berkeley}
 \thanks{The second author was partially supported by the NSF grant DMS-1266182
as well as by the Simons Foundation}
\email{tataru@math.berkeley.edu}

\begin{abstract}
  This article represents a first step toward understanding the long
  time dynamics of solutions for the Benjamin-Ono equation.  While
  this problem is known to be both completely integrable and globally
  well-posed in $L^2$, much less seems to be known concerning its long
  time dynamics. Here, we prove that for small localized data the
  solutions have (nearly) dispersive dynamics almost globally in time.
  An additional objective  is to revisit the $L^2$ theory for the
  Benjamin-Ono equation and provide a simpler, self-contained
  approach.
\end{abstract}

\maketitle

\tableofcontents

\section{Introduction}

In this article we consider  the  Benjamin-Ono equation
\begin{equation}\label{bo}
(\partial_t + H \partial_x^2) \phi  =   \frac12 \partial_x (\phi^2), \qquad \phi(0) = \phi_0,
\end{equation}
where $\phi$ is a real valued function $\phi :\mathbf{R}\times\mathbf{R}\rightarrow \mathbf{R}$. 
$H$ denotes the Hilbert transform on the real line; we use the convention that its symbol is
\[
H(\xi) = - i \sgn \xi
\]
as in Tao \cite{tao} and opposite to Kenig-Martel \cite{km}. Thus, dispersive waves travel to the right 
and solitons to the left.

The Benjamin-Ono equation is a model for the propagation of one
dimensional internal waves (see \cite{benjamin}).  Among others, it describes the
physical phenomena of wave propagation at the interface of layers of
fluids with different densities (see Benjamin~\cite{benjamin} and
Ono~\cite{ono}). It also  belongs to a larger class of equation modeling
this type of phenomena, some of which are certainly more physically
relevant than others.

Equation \eqref{bo} is known to be completely integrable. In
particular it has an associated Lax pair, an inverse scattering 
transform and an infinite hierarchy of conservation laws.  For further information in this direction 
we refer the reader to \cite{klm} and references therein.
We list only some of these conserved energies, which hold for smooth solutions
(for example $H_x^3(\mathbb{R})$).  Integrating by parts, one sees
that this problem has conserved mass,
\[
E_0 = \int \phi^2 \, dx,
\]
momentum
\[
E_1 = \int \phi H\phi_x - \frac13 \phi^3\, dx ,
\]
as well as energy
\[
E_2 = \int \phi_x^2 - \frac34 \phi^2 H \phi_x + \frac18 \phi^4 \, dx.
\]
More generally, at each nonnegative integer $k$ we similarly have a
conserved energy $E_k$ corresponding at leading order to the $\dot
H^{\frac{k}2}$ norm of $\phi$.

This is closely related to the  Hamiltonian structure of the equation, which uses the symplectic form
\[
\omega (\psi_1,\psi_2) = \int \psi_1 \partial_x \psi_2\, dx 
\]
with associated map $J = \partial_x$. Then the Benjamin-Ono equation is generated 
by the Hamiltonian $E_1$ and symplectic form $\omega$. $E_0$ generates the group of translations.
All higher order conserved energies can be viewed in turn as Hamiltonians
for a sequence of commuting flows, which are known as the Benjamin-Ono 
hierarchy of equations. 

The Benjamin-Ono equation is a dispersive equation, i.e. the group velocity of waves depends 
on the frequency. Precisely, the dispersion relation for the linear part is given by
\[
\omega(\xi) = - \xi |\xi|, 
\]
and the group velocity for waves of frequency $\xi$ is $v = 2|\xi|$.
Here we are considering real solutions, so the positive and negative
frequencies are matched.  However, if one were to restrict the linear
Benjamin-Ono waves to either positive or negative frequencies
then we obtain a linear Schr\"odinger equation with a choice of signs.
Thus one expects that many features arising in the study of nonlinear Schr\"{o}dinger equations
will also appear in the study of Benjamin-Ono.

Last but not least, when working with Benjamin-Ono one has to take into account its quasilinear
character.  A cursory examination of the equation might lead one to the conclusion that 
it is in effect semilinear. It is only a deeper analysis see \cite{MST}\cite{KT} which reveals the fact that the 
derivative in the nonlinearity is strong enough to insure that the nonlinearity is non-perturbative,
and that only countinuous dependence on the initial data may hold, even at high regularity.

Considering local and global well-posedness results in Sobolev spaces $H^s$,
 a natural threshold is given by the fact that the Benjamin-Ono equation has a scale invariance, 
\begin{equation}
\label{si}
\phi(t,x)  \to \lambda \phi(\lambda^2 t,\lambda x),
\end{equation}
and the scale invariant Sobolev space associated to this scaling is $\dot H^{-\frac12}$.

There have been many developments in the well-posedness theory for the
Benjamin-Ono equations, see: \cite{bp, ik, KT, KK, tao, MST, ponce,
  iorio, saut}.  Well-posedness in weighted Sobolev spaces was considered in \cite{fp} and \cite{fpl},
while soliton stability was studied in \cite{km, gusta}.
These is also  closely related  work on an extended class
of equations, called generalized Benjamin-Ono equations, for which we refer
the reader to \cite{herr}, \cite{hikk} and references therein. More extensive discussion of Benjamin-Ono and related fluid 
 models can be found in the survey papers \cite{abfs} and \cite{klein-saut}.  

Presently, for the Cauchy problem at low regularity,
the existence and uniqueness result at the level of $H^s(\mathbf{R})$
data is now known for the Sobolev index $s\geq 0$.  Well-posedness
in the  range $-\frac12 \leq s < 0$ appears to be an open question.
We now review some of the key thresholds in this analysis.

The $H^3$ well posedness result was obtained by Saut in
\cite{saut}, using energy estimates. For convenience we use his result as a starting point for
our work, which is why we recall it here:

\begin{theorem} 
The Benjamin-Ono equation is globally  well-posed in $H^3$.
\end{theorem}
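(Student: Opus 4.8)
The plan is to establish the theorem in two stages: a local well-posedness statement in $H^3$ obtained purely by energy methods together with a regularization scheme, and then a globalization step powered by the conserved energies of the hierarchy.

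\textbf{Step 1 (a priori energy estimate).} Differentiate \eqref{bo} three times and set $\psi=\partial_x^3\phi$, so that
\[
(\partial_t+H\partial_x^2)\psi=\phi\,\partial_x\psi+4\phi_x\psi+3\phi_{xx}^2 .
\]
Pairing with $\psi$ in $L^2$, the dispersive term drops out because $H\partial_x^2$ is skew-adjoint; the term $\phi\,\partial_x\psi$ contributes $-\tfrac12\int\phi_x\psi^2$ after one integration by parts; and $3\int\phi_{xx}^2\psi=\int\partial_x(\phi_{xx}^3)\,dx=0$. What survives is
\[
\frac{d}{dt}\|\phi\|_{\dot H^3}^2=7\int\phi_x\,\psi^2\,dx\ \lesssim\ \|\phi_x\|_{L^\infty}\|\phi\|_{\dot H^3}^2\ \lesssim\ \|\phi\|_{H^3}^3 ,
\]
using $H^2(\R)\hookrightarrow W^{1,\infty}(\R)$; the same computation at orders $0,1,2$ yields $\tfrac{d}{dt}\|\phi\|_{H^3}^2\lesssim\|\phi\|_{H^3}^3$, hence an a priori bound on $[0,T]$ with $T\sim\|\phi_0\|_{H^3}^{-1}$. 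This is exactly where the quasilinear character of \eqref{bo} is felt: the derivative in the nonlinearity is tamed only by the integration by parts above, and there is no smallness to fall back on.

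\textbf{Step 2 (existence, uniqueness, continuous dependence).} For existence I would regularize — e.g.\ add $-\varepsilon\partial_x^4\phi$ to the equation, or truncate to frequencies $\lesssim\varepsilon^{-1}$ — solve the resulting semilinear equation globally, observe that the bound of Step 1 holds uniformly in $\varepsilon$ on $[0,T]$, and pass to the limit $\varepsilon\to 0$ to get $\phi\in C([0,T];H^3)$. Uniqueness comes from an $L^2$ estimate for the difference $v=\phi-\tilde\phi$ of two solutions, which solves $(\partial_t+H\partial_x^2)v=\tfrac12\partial_x((\phi+\tilde\phi)v)$ and hence satisfies $\tfrac{d}{dt}\|v\|_{L^2}^2\lesssim(\|\phi_x\|_{L^\infty}+\|\tilde\phi_x\|_{L^\infty})\|v\|_{L^2}^2$; note this closes only at the $L^2$ level, once more reflecting the one-derivative loss in the linearization. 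Continuous dependence in the $H^3$ topology cannot be extracted from a fixed-point argument (the solution map is not uniformly continuous on bounded subsets of $H^3$), so I would run the Bona--Smith argument: compare the solution with data $\phi_0$ to the one with regularized data $\phi_0^\varepsilon$, combining the $L^2$ difference bound with the uniform $H^3$ bound and persistence of regularity.

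\textbf{Step 3 (globalization).} Since $\phi_0\in H^3$, all conserved energies of the Benjamin--Ono hierarchy whose leading part is $\|\phi\|_{\dot H^{k/2}}^2$ are finite, in particular $E_0=\|\phi\|_{L^2}^2$ and the one, call it $E_4$, with leading term $\|\phi\|_{\dot H^2}^2$. In $E_4$ every term beyond the leading one is cubic or of higher degree and, after integration by parts, can be arranged to carry at most two derivatives on any single factor; by Gagliardo--Nirenberg these terms are bounded by $\|\phi\|_{L^2}$ (conserved) times a strictly sub-quadratic power of $\|\phi\|_{\dot H^2}$, which Young's inequality absorbs. Therefore $\sup_t\|\phi(t)\|_{H^2}\le C(\|\phi_0\|_{H^2})$, and inserting this into the energy identity of Step 1 and applying Gronwall gives $\sup_{[0,T]}\|\phi(t)\|_{H^3}\le\|\phi_0\|_{H^3}e^{CT}$, finite for every $T$. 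Iterating the local theory on time intervals of uniform length then produces a global solution, and together with Step 2 this gives global well-posedness in $H^3$. The genuine obstacle — and the reason one never argues by contraction — is the quasilinearity: the whole construction rests on the structure of the energy identity rather than on any perturbative scheme, and the a priori global bound is furnished by the integrable hierarchy rather than by the first one or two conservation laws.
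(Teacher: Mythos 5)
Your proposal is correct, and it is essentially the argument the paper relies on: Theorem 1 is not proved in the paper but quoted from Saut \cite{saut}, whose proof proceeds exactly by such energy estimates, with a regularization/Bona--Smith scheme for existence and continuous dependence and the conserved energies of the hierarchy (here $E_0$ together with the energy whose leading term is $\|\phi\|_{\dot H^2}^2$) providing the global-in-time bound that closes the Gronwall argument at the $H^3$ level. The dyadic bookkeeping in your Step 1 (the coefficient $7$, the vanishing of $\int \phi_{xx}^2\phi_{xxx}\,dx$) and the subquadratic Gagliardo--Nirenberg absorption in Step 3 check out, so no gap.
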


The $H^1$ threshold is another important one, and it was reached by Tao~\cite{tao}; his 
article is highly relevant to the present work, and it is where the idea
of renormalization is first used in the study of Benjamin-Ono equation.

The $L^2$ threshold was first reached by Ionescu and Kenig \cite{ik},
essentially by implementing Tao's renormalization argument in the
context of a much more involved and more delicate functional setting,
inspired in part from the work of the second author \cite{mp} and of
Tao \cite{tao} on wave maps.  This is imposed by the fact that the
derivative in the nonlinearity is borderline from the perspective of
bilinear estimates, i.e. there is no room for high frequency losses.
An attempt to simplify the $L^2$ theory was later made by
Molinet-Pilod~\cite{mp}; however, their approach still involves a
rather complicated functional structure, involving not only $X^{s,b}$
spaces but additional weighted mixed norms in frequency.

Our first goal here is to revisit the $L^2$ theory for the Benjamin-Ono equation,
and (re)prove the following theorem:

\begin{theorem}\label{thm:lwp}  
The Benjamin-Ono equation is globally well-posed in $L^2$.
\end{theorem}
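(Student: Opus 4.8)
The plan is to combine Tao's renormalization with a Bona--Smith-type regularization argument. By the $H^3$ well-posedness recalled above, smooth solutions of \eqref{bo} exist globally in time, and for such solutions the mass $E_0 = \int \phi^2\, dx$ is conserved; hence the $L^2$ norm is automatically propagated and the substance of the theorem is not an a~priori bound but rather (i) the existence of an $L^2$ solution as a limit of smooth ones, (ii) uniqueness at $L^2$ regularity, and (iii) continuous --- though not uniformly continuous --- dependence on $\phi_0 \in L^2$. The quasilinear character of the equation (no better than continuous dependence, even at high regularity) rules out a naive contraction argument, which is precisely why the renormalization is needed.

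The first step is the gauge transform. Write $\phi = \phi_+ + \phi_-$ with $\phi_- = \overline{\phi_+}$, so that the positive-frequency part solves a Schr\"odinger-type equation $(\partial_t - i\partial_x^2)\phi_+ = \tfrac12 \partial_x P_+(\phi^2)$. The single genuinely non-perturbative term on the right is the low-high paraproduct $T_\phi\,\partial_x \phi_+$, a transport term whose strength sits exactly at the borderline permitted by the $L^2$ (indeed $\dot H^{1/2}$) scaling. I would remove it by a Cole--Hopf-type substitution: letting $\Psi$ denote a suitable antiderivative of $\phi$ and setting $\W = e^{-i\Psi}$, a unit-modulus function, so that the non-decay of $\Psi$ is absorbed frequency by frequency (conjugating the piece at frequency $2^k$ by $e^{-i\Psi_{<k}}$). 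A direct computation then shows that $\W$ satisfies a renormalized equation $(\partial_t - i\partial_x^2)\W = \mathcal N(\W)$ in which the transport term has been traded for expressions bilinear and trilinear in $\W,\overline{\W}$ and their first derivatives, the bilinear ones always carrying a low- or opposite-sign frequency output; these are genuinely perturbative. Conversely $\phi$ is recovered from $\W$ by an expression of schematic form $\phi = i\,\overline{\W}\,\partial_x\W$, so bounds for $\W$ and for $\phi$ are equivalent through paraproduct estimates, and the maps $\phi_0 \mapsto \W(0)$ and $\W \mapsto \phi$ are continuous on the relevant spaces.

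The second step is to choose a light, $L^2$-based functional framework for $\W$: since the positive-frequency Benjamin--Ono flow is literally a Schr\"odinger flow, one works in $C_tL^2_x$ intersected with the Schr\"odinger local-smoothing, maximal-function and Strichartz norms (lighter than the $X^{s,b}$-plus-weighted-norm machinery of Ionescu--Kenig and Molinet--Pilod, in keeping with the stated aim of a self-contained treatment), and then to prove in it the required multilinear estimates for $\mathcal N(\W)$: sharp bilinear $L^2$ estimates for the quadratic terms, exploiting the frequency separation produced by the gauge so that the lone derivative is always distributed favourably, together with more elementary estimates for the cubic terms. Running the same estimates on the equation for the difference $\W_1 - \W_2$ of two solutions yields short-time Lipschitz dependence; combined with conservation of $\|\phi\|_{L^2}$ this gives global control, and combined with frequency-envelope bounds for a Bona--Smith regularization of the initial data it upgrades to continuous dependence on $\phi_0 \in L^2$. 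Uniqueness follows by comparing any two $L^2$ solutions in a weaker norm.

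I expect the main obstacle to be the \emph{sharpness} of the multilinear estimates. Because $\partial_x(\phi^2)$ places the derivative exactly at the scaling threshold, no estimate may lose a positive power of frequency: every term produced by the gauge expansion must be matched to a precise bilinear or Strichartz gain, and the interaction geometry --- which triples of frequencies can resonate --- must be tracked throughout, including the delicate high-high$\to$low contributions. Designing the function space so that all of these estimates close simultaneously is where essentially all of the work lies; the renormalization is what makes the problem tractable, but it does not by itself render any individual estimate routine.
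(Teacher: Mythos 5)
Your overall architecture (gauge transform in the spirit of Tao, limit of regularized smooth solutions, differences measured in a weaker norm) matches the paper's, but the two places where the actual content of the theorem lies are left as assertions, and at both points your plan as stated has a gap. First, you propose to gauge the equation directly, working with $\W = e^{-i\Psi}$ (or its paradifferential pieces) and then closing ``the required multilinear estimates'' in a local-smoothing/maximal-function/Strichartz framework. This is essentially the Ionescu--Kenig route, and the reason their proof (and Molinet--Pilod's) is heavy is precisely that the gauge alone does not make the remaining nonlinearity perturbative in a light $L^2$-based space: the derivative in $\partial_x(\phi^2)$ leaves no room for losses, and the needed bilinear estimates do not close with only the norms you list unless additional structure is extracted first. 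The paper's key idea, absent from your proposal, is a two-step bounded normal form: before conjugating, one applies an explicit bounded quadratic correction $B_k(\phi,\phi)$ (written as a commutator to avoid the ill-defined $\partial_x^{-1}\phi$ at low frequency) which removes all non-paradifferential quadratic interactions; only the paradifferential transport term is then removed by the frequency-localized conjugation $e^{-i\Phi_{<k}}$. After this splitting the error terms are cubic/quartic forms that can be estimated with nothing more than Strichartz norms and bilinear $L^2$ bounds, run as a frequency-envelope bootstrap. Without that first step, ``designing the function space so that all estimates close'' is not a technical afterthought --- it is the open core of your argument.

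Second, your treatment of uniqueness and continuous dependence overreaches. You claim short-time Lipschitz dependence from difference estimates for $\W_1-\W_2$ and ``uniqueness by comparing any two $L^2$ solutions in a weaker norm.'' Lipschitz dependence cannot hold in $L^2$ (the flow map is merely continuous even at high regularity), so the difference estimate must be formulated in a strictly weaker topology, and one must say which and why it closes. The paper does this by proving well-posedness of the \emph{linearized} equation in $H^{-1/2}$, which requires its own modified normal form: the naive linearization of the quadratic correction produces $\partial_x^{-1}v$ at low frequencies with no commutator structure to save it, so the low-frequency part must be excluded and handled as a separate quadratic source term. This yields weak Lipschitz dependence in $H^{-1/2}$ for regular solutions, which combined with frequency-envelope bounds gives existence, uniqueness of limits of smooth solutions, and $L^2$ continuous dependence. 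Note also that uniqueness here means uniqueness in the class of limits of regular solutions; unconditional uniqueness of arbitrary $L^2$ distributional solutions (which your phrase ``any two $L^2$ solutions'' suggests) is not established by such arguments and is explicitly left aside in the paper.
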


Since the $L^2$ norm of the solutions is conserved, this is in effect a local in time result,
trivially propagated in time by the conservation of mass. In particular it says little 
about the long time properties of the flow, which will be our primary target here.

Given the quasilinear nature of the Benjamin-Ono equation, here it is important 
to specify the meaning of well-posedness. This is summarized in the following properties:

\begin{description}
\item [(i) Existence of regular solutions] For each initial data $\phi_0 \in H^3$ there exists a unique 
global solution $\phi \in C(\R;H^3)$.

\item[(ii) Existence and uniqueness of rough solutions] For each initial data $\phi_0 \in L^2$ 
there exists a solution $\phi \in  C(\R;L^2)$, which is the unique limit of regular solutions.

\item[(iii) Continuous dependence ] The data to solution map $\phi_0 \to \phi$ is continuous 
from $L^2$ into $C(L^2)$, locally in time.

\item [(iv) Higher regularity] The data to solution map $\phi_0 \to \phi$ is  continuous 
from $H^s$ into $C(H^s)$, locally in time, for each $s > 0$.

\item[(v) Weak Lipschitz dependence] The flow map for $L^2$ solutions is locally Lipschitz
in the $ H^{-\frac12}$ topology. 
\end{description}

The weak Lipschitz dependence part appears to be a new result, even though certain estimates
for differences of solutions are part of the prior proofs in \cite{ik} and \cite{mp}.

Our approach to this result is based on the idea of normal forms,
introduced by Shatah \cite{shatah} \cite{hikk}in the dispersive realm
in the context of studying the long time behavior of dispersive pde's.
Here we turn it around and consider it in the context of studying
local well-posedness.  In doing this, the chief difficulty we face is
that the standard normal form method does not readily apply for
quasilinear equations.

One very robust adaptation of the normal form method to quasilinear
equations, called ``quasilinear modified energy method'' was
introduced earlier by the authors and collaborators in \cite{BH}, and
then further developed in the water wave context first in \cite{hit}
and later in \cite{its, itg, itgr, itc}. There the idea is to modify the energies, rather
than apply a normal form transform to the equations; this method  is then
successfully used in the study of long time behavior of solutions.
Alazard and Delort~\cite{ad, ad1} have also developed another way of constructing 
the same type of almost conserved energies by using a partial normal form transformation
to symmetrize the equation, effectively diagonalizing the leading part of the energy.

The present paper provides a different quasilinear adaptation of the normal form method.
Here we do transform the equation, but not with a direct quadratic normal form correction
(which would not work). Instead we split the quadratic nonlinearity in two parts,
a milder part and a paradifferential part\footnote{This splitting is of course not a new idea,
and it has been used for some time in the study of quasilinear problems}.
Then we construct our normal form correction in two steps: first a direct quadratic correction
for the milder part, and then a renormalization type correction for the paradifferential part.
For the second step we use a paradifferential version of Tao's renormalization argument \cite{tao}.

Compared with the prior proofs of $L^2$ well-posedness in \cite{ik}
and \cite{mp}, our functional setting is extremely simple, using only
Strichartz norms and bilinear $L^2$ bounds.  Furthermore, the bilinear
$L^2$ estimates are proved in full strength but used only in a very
mild way, in order to remove certain logarithmic divergences which
would otherwise arise.  The (minor) price to pay is that the argument
is now phrased as a bootstrap argument, same as in \cite{tao}.
However this is quite natural in a quasilinear context.

One additional natural goal in this problem is the enhanced uniqueness
question, namely to provide relaxed conditions which must be imposed
on an arbitrary $L^2$ solution in order to compel it to agree with the
$L^2$ solution provided in the theorem.  This problem has received
substantial attention in the literature but is beyond the scope of the
present paper.  Instead we refer the reader to the most up to date
results in \cite{mp}.

We now arrive at the primary goal of this paper.  The question we
consider concerns the long time behavior of Benjamin-Ono solutions
with small localized data.  Precisely, we are asking what is the
optimal time-scale up to which the solutions have linear dispersive
decay.  Our main result is likely optimal, and asserts that this holds
almost globally in time:

\begin{theorem}
Assume that the initial data $\phi_0$ for \eqref{bo} satisfies
\begin{equation}\label{data}
\| \phi_0\|_{L^2} + \| x \phi_0\|_{L^2}  \leq \epsilon \ll 1.
\end{equation} 
Then the solution $\phi$ satisfies the dispersive decay bounds
\begin{equation} \label{point}
|\phi(t,x)| +\vert H\phi (t,x)\vert  \lesssim \epsilon |t|^{-\frac12} \langle x_- t^{-\frac12}\rangle^{-\frac12}
\end{equation}
up to time 
\[
|t| \lesssim T_{\epsilon}:= e^{\frac{c}{\epsilon}}, \qquad c \ll 1.
\]
\end{theorem}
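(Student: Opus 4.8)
The plan is to run a bootstrap argument combining a normal-form / modified-energy analysis in a weighted space with a pointwise decay extraction via the method of testing with wave packets (as developed by the authors in the water-wave setting). The key observation is that the relevant vector field for Benjamin-Ono is $L = x - 2tH\partial_x$, which (essentially) commutes with the linear flow $\partial_t + H\partial_x^2$, and that the weighted data hypothesis \eqref{data} is exactly the statement that $\|\phi_0\|_{L^2} + \|L\phi(0)\|_{L^2} \lesssim \epsilon$. The bootstrap assumption will be that on a time interval $[0,T]$ one has the energy-type bound
\[
\|\phi(t)\|_{L^2} + \|L\phi(t)\|_{L^2} \lesssim \epsilon \langle t \rangle^{C\epsilon},
\]
together with the pointwise bound \eqref{point}, and the goal is to improve the constants, which then yields the result up to $T_\epsilon = e^{c/\epsilon}$ since $\langle t\rangle^{C\epsilon} \lesssim 1$ there.

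The first main step is the energy estimate for $L\phi$. Since $L$ does not commute exactly with the nonlinearity $\frac12\partial_x(\phi^2)$, applying $L$ to \eqref{bo} produces a source term, and moreover the quasilinear character (the same obstruction that forces the renormalization in Theorem \ref{thm:lwp}) means one cannot simply estimate $\frac{d}{dt}\|L\phi\|_{L^2}^2$ directly. Here I would reuse the machinery behind Theorem \ref{thm:lwp}: the paradifferential renormalization / modified energy constructed there applies verbatim (or with cosmetic changes) to the linearized-type equation satisfied by $L\phi$, because $L\phi$ solves an equation with the same paradifferential principal part. This gives a modified energy $E^{(1)}(\phi)$, equivalent to $\|L\phi\|_{L^2}^2$ for small data, whose time derivative is cubic; the cubic term is then bounded by $\epsilon \cdot (\text{decay}) \cdot E^{(1)}$ using the bootstrapped pointwise bound \eqref{point} to supply an integrable-in-time (up to logarithm) factor $\|\phi(t)\|_{L^\infty} \lesssim \epsilon |t|^{-1/2}$. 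Integrating the logarithmic divergence $\int_1^t \epsilon s^{-1} \, ds \sim \epsilon \log t$ produces exactly the $\langle t\rangle^{C\epsilon}$ growth, which closes on $[0,T_\epsilon]$.

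The second main step is upgrading the weighted $L^2$ control to the pointwise bound \eqref{point}. For this I would use the wave packet testing method: for a frequency $\xi$ and the corresponding group velocity $v = 2|\xi|$, one constructs an approximate solution $\Psi_v$ of the linear equation concentrated on the ray $x = vt$, and defines $\gamma(t,v) := \langle \phi(t), \Psi_v(t)\rangle$. One shows (a) that $\gamma(t,v)$ is a good pointwise proxy, i.e. $|\phi(t,x) - t^{-1/2}\gamma(t,v) e^{i\cdots}| $ is lower order, using the $\|L\phi\|_{L^2}$ bound to control the error via a stationary-phase / $TT^*$ argument; and (b) that $\gamma$ satisfies an ODE $\dot\gamma = \frac{i c}{t}|\gamma|^2 \gamma + (\text{error})$, an asymptotic-in-time ODE of cubic nonlinear-Schr\"odinger type, whose nonlinearity is \emph{gauge} (imaginary coefficient times $|\gamma|^2\gamma$) and therefore preserves $|\gamma|$ up to the error terms. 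Hence $|\gamma(t,v)| \lesssim \epsilon$ is propagated, giving $|\phi(t,x)| \lesssim \epsilon t^{-1/2}$ in the bulk region $|x| \sim t$; the self-similar region $x_- \gtrsim t^{1/2}$ and the region $x \gg t$ (where the solution is essentially dispersive/negligible, save for where solitons would live — but small data excludes solitons) are handled by the extra $\langle x_- t^{-1/2}\rangle^{-1/2}$ weight coming directly from the weighted energy bound via Sobolev embedding in the rescaled variable. The bound on $H\phi$ follows since $H$ is bounded on the relevant spaces and commutes with everything.

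The hardest part, and the genuine novelty, is the first step: pushing the paradifferential renormalization of Theorem \ref{thm:lwp} through for the weighted variable $L\phi$ while keeping the error strictly cubic with an \emph{integrable-after-one-log} coefficient — any failure here (e.g. a genuinely non-integrable $t^{-1/2}$ contribution surviving, rather than $t^{-1}$) would degrade the time-scale from almost-global $e^{c/\epsilon}$ down to polynomial. This requires exploiting the null/gauge structure of the quadratic nonlinearity $\frac12\partial_x(\phi^2)$ — specifically that the resonant interactions in the $L\phi$ equation carry the same cancellation that makes the asymptotic ODE gauge — so that one does not merely bound but genuinely \emph{cancels} the worst cubic term by the normal-form correction, leaving only the logarithmically-divergent (and hence acceptable) remainder.
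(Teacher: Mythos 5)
The central gap is in your first step. You posit a modified energy $E^{(1)}$, ``equivalent to $\|L\phi\|_{L^2}^2$ for small data'', which grows at most like $\epsilon^2\langle t\rangle^{C\epsilon}$; no such object can exist on the time scales at stake. Indeed, $L\phi$ differs from the nonlinearly corrected variable $\L \phi = x\phi - 2t\bigl( H\phi_x - \tfrac18(3\phi^2-(H\phi)^2)\bigr)$ by the quadratic term $\tfrac{t}{4}\bigl(3\phi^2-(H\phi)^2\bigr)$, whose $L^2$ norm under the bootstrapped decay \eqref{point} is of size $\epsilon^2 t^{1/2}$; thus $\|L\phi(t)\|_{L^2}$ genuinely grows like $\epsilon + \epsilon^2 t^{1/2}$ (the paper's bounds \eqref{bd1}, \eqref{bd2} are sharp, being saturated by solitons), which for $\epsilon^{-2}\ll t\le e^{c/\epsilon}$ is far larger than $\epsilon\langle t\rangle^{C\epsilon}$. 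So any nearly conserved weighted energy must already incorporate the quadratic correction and cannot be equivalent to $\|L\phi\|_{L^2}^2$ beyond $t\sim \epsilon^{-2}$; the existence of a correction with only acceptable errors is precisely the point you assert (``genuinely cancels the worst cubic term'') without constructing it. What the paper proves, in Proposition~\ref{p:energy}, is both stronger and different in character: for the specific correction $\L\phi$ the quantity $\|\L\phi\|_{L^2}^2$ is \emph{exactly} conserved, verified by a direct algebraic computation (using identities such as $H(\phi^2-(H\phi)^2)=2\phi H\phi$ and the vanishing of $\int x(\phi^\pm)^3dx$), so no paradifferential machinery from Theorem~\ref{thm:lwp}, no Gronwall growth, and no $\langle t\rangle^{C\epsilon}$ loss occur at the energy step at all.

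This also means you misplace the origin of the almost-global threshold, and it affects your second step. With only $\|L\phi\|_{L^2}\lesssim \epsilon+\epsilon^2t^{1/2}$ available, the interpolation bound \eqref{prima} (or any wave-packet/$TT^*$ proxy error estimate phrased in terms of $L$) yields only $\epsilon t^{-1/2}(1+\epsilon t^{1/2})^{1/2}$, which degrades past $t\sim\epsilon^{-2}$; the pointwise argument must be run with $\L\phi$ itself. In the paper this is done by integrating an identity for $\frac{d}{dx}\bigl(|\phi|^2+|H\phi|^2\bigr)$ expressed through $\L\phi$, together with the long-range bound \eqref{intphi} on $\partial^{-1}\phi$ and commutator estimates; it is exactly there that the $\epsilon\log t$ and $\epsilon^2\log^2 t$ losses appear, and the requirement $\epsilon\log t\ll1$ is what produces $T_\epsilon=e^{c/\epsilon}$ --- not a slowly growing energy. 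A wave-packet-testing variant is not unreasonable for Benjamin--Ono, but as written your plan rests on an unestablished cancellation in the energy step, uses a bootstrap quantity that is quantitatively false, and does not address the zero-frequency/long-range effects (controlled in the paper through $\partial^{-1}\phi$ and the conservation of $\int\phi\,dx$) that any asymptotic-ODE argument for this equation has to confront.
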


The novelty in our result is that the solution exhibits dispersive
decay.  We also remark that better decay holds in the region $x <
0$. This is because of the dispersion relation, which sends all the
propagating waves to the right. 

A key ingredient of the proof of our result is a seemingly new 
conservation law for the Benjamin Ono equation, which is akin to a 
normal form associated to a corresponding linear conservation law.

 This result closely resembles the authors' recent work in \cite{nls}
(see also further references therein) on the cubic nonlinear
Schr\"odinger problem (NLS)
\begin{equation}
i u_t - u_{xx} = \pm u^3, \qquad u(0) = u_0,
\end{equation}
with the same assumptions on the initial data.  However, our result
here is only almost global, unlike the global NLS result in \cite{nls}.

To understand why the cubic NLS problem serves as a good comparison,
we first note that both the Benjamin-Ono equation and the cubic NLS
problem have $\dot H^{-\frac12}$ scaling. Further, for a restricted
frequency range of nonlinear interactions in the Benjamin-Ono
equation, away from zero frequency, a normal form transformation turns
the quadratic BO nonlinearity into a cubic NLS type problem for which
the methods of \cite{nls} apply. Thus, one might naively expect a
similar global result.  However, it appears that the Benjamin-Ono
equation exhibits more complicated long range dynamics near frequency
zero, which have yet to be completely understood.

One way to heuristically explain these differences is provided by the
the inverse scattering point of view. While the small data cubic
focusing NLS problem has no solitons, on the other hand in the
Benjamin-Ono case the problem could have solitons for arbitrarily
small localized data.  As our result can only hold in a non-soliton
regime, the interesting question then becomes what is the lowest
time-scale where solitons can emerge from small localized data. A
direct computation\footnote{This is based on the inverse scattering theory 
for the Benjamin-Ono equation, and will be described in subsequent work.} 
shows that this is indeed the almost global time
scale, thus justifying our result. 

We further observe that our result opens the way for the next natural
step, which is to understand the global in time behavior of solutions,
where in the small data case one expects a dichotomy between dispersive solutions and
dispersive solutions plus one soliton:

\begin{conjecture}[Soliton resolution]
Any global Benjamin-Ono solution which has small data as in \eqref{data} must 
either be dispersive, or it must resolve into a soliton and a dispersive part.
\end{conjecture}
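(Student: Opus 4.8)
The plan is to exploit the complete integrability of \eqref{bo}, since for data as in \eqref{data} one expects the inverse scattering transform for the Benjamin--Ono Lax operator to be well defined, with scattering data consisting of a reflection coefficient $r(\xi)$ together with at most finitely many discrete eigenvalues --- and for $\epsilon$ small enough, at most one. The conjectured dichotomy corresponds exactly to whether the discrete spectrum is empty. The first step is to control the direct scattering map at this low level of regularity: from $\|\phi_0\|_{L^2} + \|x\phi_0\|_{L^2} \le \epsilon$ one shows that $r$ is small and enjoys a weighted $L^2$-type smoothness inherited from the data, that any eigenvalues are simple and separated from the continuous spectrum, and that the whole construction is stable under approximation of $\phi_0$ by Schwartz data. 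Because the statement is phrased for merely $L^2 \cap L^2(x^2\,dx)$ data, this stability is essential, and it is natural to first prove everything for smooth rapidly decreasing data and then transfer by a density argument, using the weak Lipschitz dependence of the flow established alongside Theorem~\ref{thm:lwp}.

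The second step is to reformulate the inverse map as a Riemann--Hilbert (or $\bar\partial$) problem on $\mathbb R$, with jump determined by $r$ and residues encoding the solitons, and to read off $\phi(t,x)$ from its solution. The third and central step is a nonlinear steepest descent (Deift--Zhou) analysis of this problem as $|t|\to\infty$: along rays $x/t = \text{const}$ one deforms the contour through the stationary point $\xi_0$ of the phase dictated by the group velocity $v = 2|\xi|$, extracting the leading dispersive profile $|\phi| \sim \epsilon|t|^{-1/2}$ for $x > 0$ from $r$, while a residue, if present, contributes a localized soliton $Q_{c}(x - y(t))$ moving to the left plus lower-order corrections. Empty discrete spectrum yields the purely dispersive alternative; a single eigenvalue yields soliton plus radiation. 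The matching of this analysis to the bound \eqref{point} in the transition zone $|x|\lesssim |t|^{1/2}$, and in particular the better decay for $x < 0$, should fall out of the same contour geometry.

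The main obstacle --- and the reason the statement is only a conjecture --- is the behavior at frequency zero. The dispersion relation $\omega(\xi) = -\xi|\xi|$ is only $C^{1,1}$ at the origin, and, as the authors note, the Benjamin--Ono scattering transform exhibits genuine long-range structure there; the naive steepest-descent deformation degenerates as $\xi_0 \to 0$, i.e.\ precisely for the slow, low-frequency part of the solution. Resolving this seems to require building a dedicated local parametrix near $\xi = 0$ --- plausibly of parabolic-cylinder type together with a logarithmic long-range phase correction, in the spirit of the $1$D cubic NLS and mKdV asymptotics --- and gluing it to the generic picture away from zero by a matched-asymptotics argument. A purely PDE-based alternative, namely extracting a modulated soliton ansatz $\phi \approx Q_{c(t)}(x - y(t)) + (\text{dispersive})$, deriving modulation ODEs for $c(t)$ and $y(t)$ from orthogonality conditions, and running a bootstrap that upgrades the almost-global decay \eqref{point} to a global one, would confront the same low-frequency resonant interaction as its hard core, and would additionally need a mechanism to detect whether a soliton is present in the first place.
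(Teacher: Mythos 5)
The statement you are addressing is stated in the paper as a \emph{conjecture}: the authors prove only the almost-global dispersive decay of Theorem 3 and explicitly leave soliton resolution open, remarking merely that the inverse scattering theory (to be described in their subsequent work) indicates that solitons can emerge from small localized data exactly at the almost-global time scale. So there is no proof in the paper to compare against, and what you have written is a research program rather than a proof; by your own admission the central analytic difficulty (the long-range, frequency-zero behavior) is left unresolved, which is precisely why the statement is a conjecture.

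Beyond that honest caveat, several steps of your outline are themselves genuine gaps rather than routine verifications. First, the direct and inverse scattering theory for the Benjamin--Ono Lax operator is not of AKNS/Riemann--Hilbert type in the standard sense: the spectral problem is nonlocal (it involves the projections $P_\pm$), the inverse problem carries nonstandard constraints, and a rigorous bijectivity and stability theory at the regularity $L^2 \cap L^2(x^2\,dx)$ --- which your density argument requires --- does not currently exist; invoking weak Lipschitz dependence of the flow in $H^{-1/2}$ does not by itself transfer spectral data. Second, your claim that smallness of $\epsilon$ forces at most one eigenvalue needs justification and sits in tension with the paper's own observation that, unlike small-data cubic NLS, Benjamin--Ono can possess solitons for arbitrarily small localized data; the eigenvalue count for small data in the weighted class is exactly the kind of quantitative spectral statement that must be proved, not assumed. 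Third, the Deift--Zhou nonlinear steepest descent machinery you propose to apply has no off-the-shelf version for this nonlocal inverse problem, and the degenerate stationary point at $\xi=0$ (where $\omega(\xi)=-\xi|\xi|$ is only $C^{1,1}$) is not a technical nuisance to be patched by a parabolic-cylinder parametrix by analogy with NLS/mKdV --- it is the locus of the long-range dynamics the authors identify as not yet understood. In short, your outline is a plausible roadmap consistent with the authors' own expectations, but each of its three pillars is an open problem, so it does not constitute a proof of the conjecture.
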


\section{Definitions and review of notations }

\subsubsection*{The big O notation:} We use the notation $A \lesssim
B$ or $A = O(B)$ to denote the estimate that $|A| \leq C B$, where $C$
is a universal constant which will not depend on $\epsilon$.  If $X$
is a Banach space, we use $O_X(B)$ to denote any element in $X$ with
norm $O(B)$; explicitly we say $u =O_X(B)$ if $\Vert u\Vert _X\leq C
B$.  We use $\langle x\rangle$ to denote the quantity $\langle x
\rangle := (1 + |x|^2)^{1/2}$.

\subsubsection*{Littlewood-Paley decomposition:} One important tool in
dealing with dispersive equations is the Littlewood-Paley
decomposition.  We recall its definition and also its usefulness in
the next paragraph. We begin with the Riesz decomposition
\[
 1 = P_- + P_+,
\]
where $P_\pm$ are the Fourier projections to $\pm [0,\infty)$; from 
\[
\widehat{Hf}(\xi)=-i\sgn (\xi)\, \hat{f}(\xi),
\]
 we observe that
\begin{equation}\label{hilbert}
iH = P_{+} - P_{-}.
\end{equation}
Let $\psi$ be a bump function adapted to $[-2,2]$ and equal to $1$ on
$[-1,1]$.  We define the Littlewood-Paley operators $P_{k}$ and
$P_{\leq k} = P_{<k+1}$ for $k \geq 0$ by defining
$$ \widehat{P_{\leq k} f}(\xi) := \psi(\xi/2^k) \hat f(\xi)$$
for all $k \geq 0$, and $P_k := P_{\leq k} - P_{\leq k-1}$ (with the
convention $P_{\leq -1} = 0$).  Note that all the operators $P_k$,
$P_{\leq k}$ are bounded on all translation-invariant Banach spaces,
thanks to Minkowski's inequality.  We define $P_{>k} := P_{\geq k-1}
:= 1 - P_{\leq k}$.

For simplicity, and because $P_{\pm}$ commutes with the
Littlewood-Paley projections $P_k$, $P_{<k}$, we will introduce the
following notation $P^{\pm}_k:=P_kP_{\pm}$ , respectively
$P^{\pm}_{<k}:=P_{\pm}P_{<k}$.  In the same spirit, we introduce the
notations $\phi^{+}_k:=P^{+}_k\phi$, and $\phi^{-}_k:=P^{-}_k\phi$,
respectively.

Given the projectors $P_k$, we also introduce additional projectors $\tilde P_k$
with slightly enlarged support (say by $2^{k-4}$) and symbol equal to $1$ in the support of $P_k$.

From Plancherel's theorem  we have the bound
\begin{equation}\label{eq:planch}
 \| f \|_{H^s_x} \approx (\sum_{k=0}^\infty \| P_k f\|_{H^s_x}^2)^{1/2}
\approx (\sum_{k=0}^\infty 2^{ks} \| P_k f\|_{L^2_x}^2)^{1/2}
\end{equation}
for any $s \in \mathbb R$.

\subsubsection*{Multi-linear expressions} We shall now make use of a
convenient notation for describing multi-linear expressions of product
type, as in \cite{tao-c}.  By $L(\phi_1,\cdots,\phi_n)$ we denote a
translation invariant expression of the form
\[
L(\phi_1,\cdots,\phi_n)(x) = \int K(y) \phi_1(x+y_1) \cdots \phi_n(x+y_n) \, dy, 
\] 
 where $K \in L^1$. More generally, one can replace $Kdy$ by any bounded measure. 
  By $L_k$ we denote such multilinear expressions whose output is localized at frequency $2^k$.
 
  This $L$ notation is extremely handy for expressions such as the
  ones we encounter here; for example we can re-express the normal
  form \eqref{commutator B_k} in a simpler way as shown in
  Section~\ref{s:local}. It also behaves well with respect to reiteration,
e.g. 
\[
L(L(u,v),w) = L(u,v,w).
\]

Multilinear $L$ type expressions can easily be estimated
in terms of linear bounds for their entries. For instance we have
\[
\Vert L(u_1,u_2) \|_{L^r} \lesssim \|u_1\|_{L^{p_1}} \|u_1\|_{L^{p_2}}, \qquad \frac{1}{p_1}+ \frac{1}{p_2} = \frac{1}{r}.
\]
A slightly more involved situation arises in this article when we seek to 
use bilinear bounds in estimates for an $L$ form. There we need to account for the 
effect of uncorrelated translations, which are allowed given the integral bound 
on the kernel of $L$. To account for that we use the translation group $\{T_y\}_{y \in \R}$,
\[
(T_y u)(x) = u(x+y),
\]
and estimate, say, a trilinear form as follows:
\[
\|L(u_1,u_2,u_3) \|_{L^r} \lesssim \|u_1\|_{L^{p_1}} \sup_{y \in \R} 
\|u_2 T_y u_3\|_{L^{p_2}}, \qquad \frac{1}{p_1}+ \frac{1}{p_2} = \frac{1}{r} .
\]
On occasion we will write this in a shorter form
\[
\|L(u_1,u_2,u_3) \|_{L^r} \lesssim \|u_1\|_{L^{p_1}}   \|L(u_2,u_3)\|_{L^{p_2}}.
\]

  To prove the boundedness in $L^2$ of the normal form transformation,
  we will use the following proposition from Tao \cite{tao-c}; for
  completeness we recall it below:

\begin{lemma}[Leibnitz rule for $P_k$]\label{commutator}  We have the commutator identity
\begin{equation}
\label{commute}
\left[ P_k\, ,\, f\right]  g = L(\partial_x f, 2^{-k} g).
\end{equation}
\end{lemma}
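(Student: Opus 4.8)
The plan is to exploit the fact that $P_k$ is a Fourier multiplier whose symbol is a smooth bump localized at frequency $2^k$, so that the commutator $[P_k, f]$ is a smoothing operator of order $-1$ with a kernel that is tied to the scale $2^{-k}$. Concretely, let $m_k(\xi) = \psi(\xi/2^k) - \psi(\xi/2^{k-1})$ be the symbol of $P_k$, and write the commutator acting on $g$ in physical space: $([P_k, f]g)(x) = \int (\check m_k(x-y) - \check m_k(x-z)\ ... )$ — more precisely, using the Fourier representation,
\[
([P_k,f]g)(x) = \int \check{m}_k(z)\, \bigl(f(x-z) - f(x)\bigr)\, g(x-z)\, dz .
\]
The key algebraic step is the exact identity $f(x-z) - f(x) = -\int_0^1 z\cdot (\partial_x f)(x - sz)\, ds$, which extracts one full derivative from $f$ and leaves behind a factor of $z$. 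Substituting this in, the commutator becomes
\[
([P_k,f]g)(x) = -\int_0^1\!\!\int z\, \check{m}_k(z)\, (\partial_x f)(x-sz)\, g(x-z)\, dz\, ds .
\]

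Next I would carry out the rescaling that makes the $L$-notation manifest. Since $m_k(\xi) = m_0(\xi/2^k)$ (up to the harmless distinction between $P_k$ and $P_1$-type scalings, handled by a fixed bump), its inverse Fourier transform scales as $\check m_k(z) = 2^k \check m_0(2^k z)$, and $\check m_0 \in \mathcal S \subset L^1$ with all moments finite. Then $z\,\check m_k(z) = 2^{-k}\,(2^k z)\,2^k\check m_0(2^k z) = 2^{-k}\, \tilde K_k(z)$ where $\tilde K_k(z) = 2^k\,(2^k z)\check m_0(2^k z)$ has $L^1$ norm independent of $k$ (because $w\check m_0(w)$ is Schwartz). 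Changing variables $w = 2^k z$, $s\mapsto s$, and absorbing the resulting kernel (a fixed $L^1$ function of the translation parameters, integrated against $ds$ on $[0,1]$, hence still a bounded measure) into the definition of $L$, one sees that the right-hand side is exactly of the form $L(\partial_x f,\ 2^{-k} g)$: the $2^{-k}$ is pulled out as the explicit prefactor, $\partial_x f$ and $g$ appear each translated by an amount controlled by the kernel, and the kernel lies in $L^1$ (indeed it is a bounded measure, which the excerpt explicitly allows).

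The only genuine subtlety — and the step I would treat most carefully — is the bookkeeping of the frequency localization and the precise meaning of "$L_k$" versus "$L$". The output of $[P_k,f]g$ need not be frequency-localized at $2^k$ on the nose, since $f$ is unlocalized; but the statement as written only claims an $L$-form identity, not an $L_k$ one, so this is not actually an obstacle here — one just needs to note that the kernel bound is uniform in $k$, which is exactly what the rescaling above delivers. A secondary point is matching conventions: $P_k = P_{\le k} - P_{\le k-1}$ has symbol $\psi(\xi/2^k) - \psi(\xi/2^{k-1})$, and for $k=0$ the convention $P_{\le -1}=0$ makes $P_0 = P_{\le 0}$ with symbol $\psi(\xi)$; in every case the symbol is $\psi_1(\xi/2^k)$ for one of two fixed Schwartz bumps $\psi_1$, so the argument is uniform. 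Finally I would remark that the derivative can be placed on either factor by symmetry of the construction (one could equally write $f(x-z)-f(x)$ as an integral of $\partial_x f$ evaluated near $x-z$), which is why the $L$-form notation, insensitive to uncorrelated translations, is the natural language for the conclusion.
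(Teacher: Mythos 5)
Your argument is correct, and it is the standard proof of this commutator identity: write the kernel of $[P_k,f]$, use $f(x-z)-f(x)=-\int_0^1 z\,(\partial_x f)(x-sz)\,ds$ to extract the derivative, and rescale to see that $z\,\check m_k(z)$ carries a factor $2^{-k}$ times a kernel of mass uniform in $k$. The paper itself does not prove the lemma but quotes it from Tao's wave maps paper, where essentially this same computation is carried out, so your proposal matches the intended argument.
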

When classifying cubic terms (and not only) obtained after
implementing a normal form transformation, we observe that having a
commutator structure is a desired feature.  In particular Lemma
~\ref{commutator} tells us that when one of the entry (call it $g$) has
frequency $\sim 2^k$ and the other entry (call it $f$) has frequency
$\lesssim 2^k$, then $P_k(fg) - f P_k g$ effectively shifts a
derivative from the high-frequency function $g$ to the low-frequency
function $f$.  This shift will generally ensure that all such
commutator terms will be easily estimated.

\subsubsection*{Frequency envelopes.} Before stating one of the main
theorems of this paper, we revisit the \emph{frequency envelope}
notion; it will turn out to be very useful, and also an elegant tool
later in the proof of the local well-posedness result, both in the
proof of the a-priori bounds for solutions for the Cauchy problem
\eqref{bo} with data in $L^2$, which we state in
Section~\ref{s:local}, and in the proof of the bounds for the
linearized equation, in the following section.

Following Tao's paper \cite{tao} we say that a sequence $c_{k}\in l^2$
is an $L^2$ frequency envelope for $\phi \in L^2$ if
\begin{itemize}
\item[i)] $\sum_{k=0}^{\infty}c_k^2 \lesssim 1$;\\
\item[ii)] it is slowly varying, $c_j /c_k \leq 2^{\delta \vert j-k\vert}$, with $\delta$ a very small universal constant;\\
\item[iii)] it bounds the dyadic norms of $\phi$, namely $\Vert P_{k}\phi \Vert_{L^2} \leq c_k$. 
\end{itemize}
Given a frequency envelope $c_k$ we define 
\[
 c_{\leq k} = (\sum_{j \leq k} c_j^2)^\frac12, \qquad  c_{\geq k} = (\sum_{j \geq k} c_j^2)^\frac12.
\]

\begin{remark}
To avoid dealing with certain issues arising at low frequencies,
we can harmlessly make the extra assumption that $c_{0}\approx 1$.
\end{remark}

\begin{remark}
Another useful variation is to weaken the slowly varying assumption to
\[
2^{- \delta \vert j-k\vert} \leq    c_j /c_k \leq 2^{C \vert j-k\vert}, \qquad j < k,
\]
where $C$ is a fixed but possibly large constant. All the results in this paper are compatible with this choice.
This offers the extra flexibility of providing higher regularity results by the same argument.
\end{remark}

\section{The linear flow}

Here we consider  the linear Benjamin-Ono flow,
\begin{equation}\label{bo-lin}
(\partial_t + H\partial^2)\psi = 0, \qquad \psi(0) = \psi_0.
\end{equation}
Its solution $\phi(t) = e^{-t H\partial^2} \psi_0$ has conserved $L^2$ norm, and satisfies 
standard dispersive bounds:

\begin{proposition}\label{p:bodisp}
The linear Benjamin-Ono flow satisfies the dispersive bound
\begin{equation} 
 \|e^{-t H\partial^2}\|_{L^1 \to L^\infty} \lesssim t^{-\frac12}.
\end{equation}
\end{proposition}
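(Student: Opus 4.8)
The plan is to reduce the bound to a stationary phase estimate for the oscillatory kernel of $e^{-tH\partial_x^2}$. Writing the solution via the Fourier transform, we have
\[
(e^{-tH\partial^2}\psi_0)(x) = \frac{1}{2\pi}\int e^{i x\xi} e^{-it H(\xi)\xi^2}\hat\psi_0(\xi)\, d\xi = (K_t * \psi_0)(x),
\]
where, using $H(\xi) = -i\sgn\xi$, the phase is $\phi_t(\xi) = x\xi - t\,\xi|\xi|$ and the kernel is
\[
K_t(x) = \frac{1}{2\pi}\int e^{i(x\xi - t\xi|\xi|)}\, d\xi.
\]
It then suffices to prove $\|K_t\|_{L^\infty} \lesssim |t|^{-1/2}$, since $\|K_t * \psi_0\|_{L^\infty}\le \|K_t\|_{L^\infty}\|\psi_0\|_{L^1}$. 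By the time-reversal and spatial-reflection symmetries of the equation it is enough to treat $t>0$.

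Next I would split the integral into the regions $\xi>0$ and $\xi<0$. On $\xi>0$ the phase is $x\xi - t\xi^2$, which is exactly the (rescaled) Schrödinger phase; its derivative $x - 2t\xi$ vanishes at $\xi_* = x/(2t)$, and the second derivative is the constant $-2t$. On $\xi<0$ the phase is $x\xi + t\xi^2$, with a nondegenerate critical point at $\xi_* = -x/(2t)$ and second derivative $+2t$. In both pieces the second derivative of the phase has absolute value $2t$ uniformly in $\xi$, so the standard van der Corput lemma (or an explicit Fresnel-integral computation after completing the square) gives a bound of size $|t|^{-1/2}$ for each piece, with a constant independent of $x$. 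Adding the two contributions yields $\|K_t\|_{L^\infty}\lesssim t^{-1/2}$, uniformly in $x$, which is the claim. Alternatively, one can avoid cases entirely by noting that each half-line piece, after completing the square, is a Fresnel integral $\int_0^\infty e^{\mp i t(\xi - \xi_*)^2}d\xi$ whose value is $O(t^{-1/2})$ regardless of the location of $\xi_*$.

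The only genuine subtlety — and the step I expect to require the most care — is the behavior near the frequency origin $\xi = 0$, which is a corner of the symbol $\xi|\xi|$ where the second derivative of the phase is discontinuous (jumping from $-2t$ to $+2t$) but does not degenerate. Because $|\phi_t''(\xi)| = 2t$ stays bounded below on each side, there is no actual loss here: van der Corput applies on each half-line separately and the endpoint contribution at $\xi=0$ is harmless. (One does \emph{not} integrate across $\xi=0$; the decomposition $1 = P_+ + P_-$ from \eqref{hilbert} is precisely what makes the two halves independent.) No frequency truncation is needed, since the estimate is uniform across all dyadic scales; this is consistent with the scaling \eqref{si}, under which the $L^1\to L^\infty$ bound is scale invariant.
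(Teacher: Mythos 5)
Your argument is correct and is essentially the paper's first proof: both write $e^{-tH\partial^2}\psi_0$ as convolution with the oscillatory kernel $\int e^{i(x\xi - t\xi|\xi|)}\,d\xi$ (the paper rescales $\xi \to t^{-1/2}\eta$ first, which is only cosmetic) and bound it by stationary phase, handling the corner of the phase at $\xi=0$ by treating the two half-lines separately exactly as you do. The paper also records a second, energy-estimate proof based on the operator $L = x - 2tH\partial_x$, which you do not need here but which sets up the nonlinear decay analysis later.
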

This is a well known result. For convenience we outline the classical proof,
and then provide a second, energy estimates based proof.

\begin{proof}[First proof of Proposition~\ref{p:bodisp}]
  Applying the spatial Fourier transform and solving the corresponding
  differential equation we obtain the following solution of the linear
  Benjamin-Ono equation
\begin{equation}
\label{linear-sol}
\psi (t,x)=\int_{-\infty}^{\infty}\int_{-\infty}^{\infty} e^{-i\vert \xi\vert \xi t+i\xi (x-y)}\psi_{0}(y)\, dyd\xi.
\end{equation}
We change coordinates $\xi \rightarrow t^{-\frac{1}{2}}\eta$ and 
rewrite \eqref{linear-sol} as
\[
\psi (t,x)=t^{-\frac{1}{2}}\int_{-\infty}^{\infty}\int_{-\infty}^{\infty} e^{-i\vert \eta\vert \eta +i\eta t^{-\frac{1}{2}} (x-y)}\psi_{0}(y)\, dyd\eta ,
\]
which can be further seen as a convolution 
\[
\psi (t,x)=t^{-\frac{1}{2}} A(t^{-\frac{1}{2}}x)\ast \psi_{0}(x),
\]
where $A(x)$ is an oscillatory integral
\[
A(x):=\int_{-\infty}^{\infty}e^{-i\vert \eta \vert \eta +i\eta x}\, d\eta .
\]
It remains to show that $A$ is bounded, which follows by a standard stationary phase argument, with a minor 
complication arising from the fact that the phase is not $C^2$ at $\eta = 0$.
\end{proof}

The second proof will also give us a good starting point in our study of the dispersive properties for the
nonlinear equation. This is based on using the operator 
\[
L = x- 2 t H \partial_x ,
\]
which is the push forward of $x$ along the linear flow,
\[
L(t) = e^{-t H\partial^2} x e^{t H\partial^2} ,
\]
and thus commutes with the linear operator,
\[
[ L, \partial_t + H\partial^2] = 0.
\]
In particular this shows that for solutions $\psi$ to the homogeneous equation, the 
quantity $\|L\psi \|_{L^2}^2$ is also a conserved quantity.

\begin{proof}[Second proof of Proposition~\ref{p:bodisp}]

We rewrite the dispersive estimate  in the form
\[
\| e^{-t H \partial^2} \delta_0 \|_{L^\infty} \lesssim t^{-\frac12}.
\]
We approximate $\delta_0$ with standard bump functions $\alpha_\epsilon(x) = \epsilon^{-1} \alpha(x/\epsilon)$,
where $\alpha$ is a $C_0^\infty$ function with integral one.
It suffices to show the uniform bound
\begin{equation}\label{phie}
\| e^{-t H \partial^2} \alpha_\epsilon \|_{L^\infty} \lesssim t^{-\frac12}.
\end{equation}

The functions $\alpha_\epsilon$ satisfy the $L^2$ bound 
\[
\|\alpha_\epsilon\|_{L^2} \lesssim \epsilon^{-\frac12}, \qquad \|x \alpha_\epsilon\|_{L^2} \lesssim \epsilon^{\frac12}.
\]
By energy estimates, this implies that
\[
\|e^{-t H \partial^2}\alpha_\epsilon\|_{L^2} \lesssim \epsilon^{-\frac12}, \qquad \|L e^{-t H \partial^2} \alpha_\epsilon\|_{L^2} 
\lesssim \epsilon^{\frac12}.
\]
Then the bound \eqref{phie} is a consequence of the following

\begin{lemma}
The following pointwise bound holds:
\begin{equation}
\label{prima}
\|\psi\|_{L^\infty}+\|H\psi\|_{L^\infty} \lesssim t^{-\frac12}  \|\psi\|_{L^2}^\frac12 \| L\psi\|_{L^2}^\frac12 .
\end{equation}
\end{lemma}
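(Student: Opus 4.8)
The plan is to prove the pointwise bound \eqref{prima} by a Littlewood--Paley decomposition combined with the observation that $L = x - 2tH\partial_x$ controls spatial localization of each dyadic piece at the scale dictated by the group velocity. First I would reduce to estimating $\|P_k \psi\|_{L^\infty}$ for each $k \geq 0$ and then summing. Fix a frequency $2^k$; on this frequency range the operator $H\partial_x$ acts like $\mp 2^k$ (up to sign depending on $P_\pm$), so $L$ behaves like $x \mp 2t\cdot 2^k$, i.e. it measures the distance from the spatial point where waves of frequency $2^k$ concentrate at time $t$. By Bernstein's inequality, $\|P_k \psi\|_{L^\infty} \lesssim 2^{k/2}\|P_k\psi\|_{L^2}$, which is good for low frequencies but must be improved for $2^k \gtrsim t^{-1/2}$. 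For those frequencies I would instead exploit the localization: writing $\chi$ for a smooth cutoff to the region $|x \mp 2t\cdot 2^k| \lesssim R$, one has $\|\chi P_k\psi\|_{L^\infty} \lesssim 2^{k/2}\|\chi P_k \psi\|_{L^2} \lesssim 2^{k/2} (\|\psi\|_{L^2}^{1/2}) (R^{-1/2}\|L\psi\|_{L^2})^{1/2}$ roughly, after using that $\chi$ times $P_k$ of a function is controlled by $R^{-1}$ times $L\psi$ plus lower-order commutator terms; optimizing the tradeoff between the Bernstein factor $2^{k/2}$ and the gain $R^{-1/2}$ from the weight, with $R$ chosen $\sim 2^{2k} t$ — the natural scale — produces the bound $\|P_k\psi\|_{L^\infty} \lesssim (2^k t^{1/2})^{-1/2}\cdot t^{-1/4}\|\psi\|_{L^2}^{1/2}\|L\psi\|_{L^2}^{1/2}$ or similar, which is summable in $k$ over the high range.

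More precisely, the cleaner route is: for each $k$, interpolate between the two bounds $\|P_k\psi\|_{L^\infty} \lesssim 2^{k/2}\|P_k\psi\|_{L^2}$ and a "weighted Bernstein" bound of the form $\|P_k\psi\|_{L^\infty} \lesssim 2^{k/2} 2^{-k}\|\partial_x(w_k P_k\psi)\|_{L^2}^{1/2}\|w_k P_k\psi\|_{L^2}^{1/2}$-type estimate, where $w_k(x) = x \mp 2t\cdot 2^k$ localizes to the relevant wave packet; the key point is that $w_k P_k \psi = P_k(L\psi) + (\text{commutator})$, and the commutator $[P_k, x]$ is handled by Lemma~\ref{commutator}, costing $2^{-k}$, which is acceptable. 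Combining, one gets for each $k$ a bound of the shape $\|P_k\psi\|_{L^\infty}^2 \lesssim 2^k \|P_k\psi\|_{L^2}\cdot\big(2^{-k}\|P_{\leq k+2}L\psi\|_{L^2} + 2^{-k}\|P_{\leq k+2}\psi\|_{L^2}\big)$, and then $\|P_k\psi\|_{L^\infty} \lesssim \min(2^{k/2}\|\psi\|_{L^2},\ \text{[localized term]})$. Choosing in the sum over $k$ the first bound when $2^k \leq t^{-1/2}(\|L\psi\|_{L^2}/\|\psi\|_{L^2})$ and the second otherwise, both partial sums are geometric and sum to $t^{-1/2}\|\psi\|_{L^2}^{1/2}\|L\psi\|_{L^2}^{1/2}$.

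The bound on $H\psi$ follows identically, since $H$ is frequency-localized-bounded ($|H(\xi)| = 1$) and commutes with every $P_k$, so $\|P_k H\psi\|_{L^\infty} = \|HP_k\psi\|_{L^\infty}$ and $H$ is bounded on each dyadic piece in $L^\infty$ up to a universal constant (via Bernstein again, or since on a fixed dyadic frequency shell $H$ is a Fourier multiplier with smooth symbol). The main obstacle I anticipate is the bookkeeping at the endpoint frequencies and making the weighted Bernstein estimate rigorous — in particular handling the fact that $w_k$ is unbounded, so one genuinely needs to work with $P_k$ applied to $L\psi$ rather than literally multiplying by $w_k$, and the commutator terms from $[P_k, x]$ must be shown not to destroy the $2^{-k}$ gain. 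This is exactly where Lemma~\ref{commutator} (the Leibniz rule for $P_k$) does the work: $[P_k, x]g = L(\partial_x(x), 2^{-k}g) = L(1, 2^{-k}g)$, which is $O_{L^2}(2^{-k}\|g\|_{L^2})$, precisely the needed bound. A secondary subtlety is that the estimate must be uniform down to $k=0$; the remark allowing $c_0 \approx 1$ and the fact that low frequencies are handled by plain Bernstein with no weight needed make this harmless.
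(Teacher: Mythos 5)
There is a genuine gap, and it sits exactly at your key step, the ``weighted Bernstein'' bound. You treat $L=x-2tH\partial_x$ on the block $P_k$ as if it were multiplication by $w_k(x)=x\mp 2t\cdot 2^k$ up to a commutator of size $2^{-k}$, via ``$w_kP_k\psi=P_k(L\psi)+[P_k,x]\psi$''. That identity is false: on the positive-frequency half of the block the symbol of $2tH\partial_x$ is $2t\xi$ with $\xi$ ranging over an interval of length $\sim 2^k$, so the discrepancy $2t(H\partial_x-2^k)P_k^+\psi$ has $L^2$ size up to $\sim 2^kt\,\|P_k\psi\|_{L^2}$; the spread of group velocities inside one dyadic block smears the packet over a region of length $\sim 2^kt$, not $O(1)$, and this error is not a $2^{-k}$-commutator that Lemma~\ref{commutator} can absorb. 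As a consequence the dyadic bound you want to sum, $\|P_k\psi\|_{L^\infty}\lesssim 2^{-k/2}t^{-1/2}\|\psi\|_{L^2}^{1/2}\|L\psi\|_{L^2}^{1/2}$, is simply false: test it on $\psi(x)=\Re\bigl(e^{i2^kx}\chi((x-2\cdot 2^kt)/\sqrt{t})\bigr)$ with $\hat\chi$ concentrated near $0$. Then $\|\psi\|_{L^\infty}\sim 1$, $\|\psi\|_{L^2}\sim t^{1/4}$ and $\|L\psi\|_{L^2}\sim t^{1/2}\|\psi\|_{L^2}$, so \eqref{prima} is saturated while your per-block bound is smaller by the factor $2^{-k/2}$. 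Without some gain in $k$ the high-frequency sum cannot be ``geometric'', because you only control $\|P_k\psi\|_{L^2}$ and $\|P_kL\psi\|_{L^2}$ in $\ell^2$; and the alternative display you write, $\|P_k\psi\|_{L^\infty}^2\lesssim\|P_k\psi\|_{L^2}(\|L\psi\|_{L^2}+\|\psi\|_{L^2})$, contains no factor of $t^{-1}$ at all, so it cannot imply \eqref{prima} even for one block. The root cause is that spatial localization by a weight alone produces no time decay; your sketch never invokes the dispersive oscillation that actually generates $t^{-1/2}$. The same issue reappears at $k=0$: when $t$ is large, plain Bernstein on $P_0$ gives only $\|\psi\|_{L^2}$, which is far weaker than \eqref{prima} there.

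For contrast, the paper's proof is global in frequency and uses the oscillation directly: after rescaling to $t=1$ it sets $u=P^+\psi$ (so $\psi=2\Re u$, $H\psi=2\Im u$), notes $(x+2i\partial_x)u=P^+L\psi+c$ where the constant $c=\int\psi\,dx$ comes from the commutator of $P^+$ with $x$, conjugates by the quadratic phase so that $\partial_x\bigl(ue^{ix^2/4}\bigr)=\tfrac{1}{2i}e^{ix^2/4}(P^+L\psi+c)$, subtracts $cF$ with $F$ a bounded antiderivative of $\tfrac1{2i}e^{ix^2/4}$, and then applies the interpolation $\|v\|_{L^\infty}^2\lesssim \|v_x\|_{L^2}\|v\|_{L^2}$ (proved on intervals of length $R$ and optimized in $R$), together with the separate estimate \eqref{c-est}, $c^2\lesssim\|\psi\|_{L^2}\|L\psi\|_{L^2}$. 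The two ingredients missing from your proposal are precisely these: the quadratic-phase conjugation (equivalently, a stationary-phase use of the dispersive oscillation), which converts the weight $L$ into a genuine derivative and is where $t^{-1/2}$ enters after rescaling; and the zero-frequency constant $c$, which is exactly the non-decaying obstruction at low frequency and must be handled by its own conservation-law argument. Once these are in place no Littlewood--Paley decomposition is needed, and the wave-packet example above shows that a frequency-by-frequency version of \eqref{prima} with summable constants is not available, so the approach needs to be replaced rather than patched.
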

We remark that the operator $L$ is elliptic in the region $x < 0$, therefore
a better pointwise bound is expected there. Indeed, we have the estimate
\begin{equation}
\label{prima+}
|\psi(t,x)| + |H \psi(t,x)|  \leq t^{-\frac12} (1+ |x_-| t^{-\frac12})^{-\frac14} \|\psi\|_{L^2}^\frac12 \| L\psi\|_{L^2}^\frac12 ,
\end{equation}
where $x_-$ stands for the negative part of $x$. To avoid repetition we do not prove this here,
but it does follow from the analysis in the last section of the paper.

\begin{proof}
Denote 
\[
c =  \int_\R \psi\,  dx .
\]
We first observe that we have
\begin{equation} \label{c-est}
c^2 \lesssim \| \psi\|_{L^2} \|L\psi\|_{L^2}.
\end{equation}
All three quantities are constant along the linear Benjamin-Ono flow, so it suffices to verify this 
at $t = 0$. But there this inequality becomes 
\[
c^2 \lesssim \| \psi\|_{L^2} \| x \psi\|_{L^2},
\]
which is straightforward using H\"older's inequality on each dyadic spatial region.

Next we establish the uniform $t^{-\frac12}$ pointwise bound. We rescale to $t = 1$.
Denote  $u = P^+ \psi$, so that $\psi = 2 \Re u$ and $H\psi = 2 \Im u$.  Hence it suffices to 
obtain the pointwise bound for $u$.

We begin with the relation
\[
( x + 2i \partial) u = P^+ L \psi + c,
\]
where the $c$ term arises from the commutator of $P^+$ and $x$. 
We rewrite this as 
\[
\partial_x ( ue^{\frac{ix^2}{4}}) = \frac{1}{2i} e^{\frac{ix^2}{4} } ( P^+ L \psi + c).
\]
Let $F$ be a bounded antiderivative for $\frac{1}{2i}  e^{\frac{ix^2}{4}} $. 
Then we introduce the auxiliary function 
\[
v =  ue^{\frac{ix^2}{4t}} -  cF,
\]
which satisfies
\[
 \partial_x v =  \frac{1}{2i} e^{\frac{ix^2}{4}} ( P^+ L \psi ).
\]
In view of the previous bound \eqref{c-est} for $c$, it remains to show that
\begin{equation}\label{v-est}
\| v\|_{L^\infty}^2  \lesssim c^2 +\| v_x\|_{L^2} \| v+cF\|_{L^2} .
\end{equation}

On each interval $I$ of length $R$ we have by H\"older's inequality
\[
\| v\|_{L^\infty(I)} \lesssim R^{\frac12} \| v_x\|_{L^2(I)} + R^{-\frac12} \| v\|_{L^2(I)}.
\]
Thus we obtain
\[
\| v\|_{L^\infty}^2 \lesssim R \|v_x\|_{L^2}^2 + R^{-1} ( \|v+cF\|_{L^2}^2 + c^2 R)   
= c^2 + R \|v_x\|_{L^2}^2 + R^{-1}  \|v+cF\|_{L^2}^2 ,
\]
and \eqref{v-est} follows by optimizing the value for $R$.

 \end{proof}
\end{proof}

One standard consequence of the dispersive estimates is the  Strichartz inequality, 
which applies to solutions to the inhomogeneous linear Benjamin-Ono equation.
\begin{equation}\label{bo-lin-inhom}
(\partial_t + H\partial^2)\psi = f, \qquad \psi(0) = \psi_0.
\end{equation}

We define the Strichartz space $S$ associated to the $L^2$ flow by
\[
S = L^\infty_t L^2_x \cap L^4_t L^\infty_x,
\]
as well as its dual
\[
S' = L^1_t L^2_x + L^{\frac43} _t L^1_x .
\]
We will also use the notation 
\[
S^{s} = \langle D \rangle^{-s} S
\]
to denote the similar spaces associated to the flow in $H^s$.

The Strichartz estimates in the $L^2$ setting are summarized in the following

\begin{lemma}
Assume that $\psi$ solves \eqref{bo-lin-inhom} in $[0,T] \times \R$. Then 
the following estimate holds.
\begin{equation}
\label{strichartz}
\| \psi\|_S \lesssim \|\psi_0 \|_{L^2} + \|f\|_{S'} .
\end{equation}
\end{lemma}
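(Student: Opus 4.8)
The plan is to derive the Strichartz estimate \eqref{strichartz} from the dispersive bound in Proposition~\ref{p:bodisp} by the standard $TT^*$ machinery, treating the endpoint carefully. First I would record the two basic facts about the linear propagator $U(t) = e^{-tH\partial^2}$: it is unitary on $L^2_x$ (conservation of the $L^2$ norm, already noted), and it satisfies the decay bound $\|U(t)U(-s)\|_{L^1_x \to L^\infty_x} \lesssim |t-s|^{-\frac12}$, which follows from Proposition~\ref{p:bodisp} together with the group property $U(t)U(-s) = U(t-s)$. Interpolating the decay bound with the $L^2$ unitarity gives $\|U(t-s)\|_{L^{p'}_x \to L^p_x} \lesssim |t-s|^{-\frac12 + \frac1p}$ for $2 \le p \le \infty$. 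This is precisely the decay exponent of the one-dimensional Schr\"odinger propagator, so the admissible pair we target is $(q,p) = (4,\infty)$, which satisfies $\frac2q = \frac12 - \frac1p$ at $p=\infty$.

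Next I would run the $TT^*$ argument. Setting $Tf = \int_0^T U(-s) f(s)\, ds$ (or its retarded analogue), the homogeneous estimate $\|U(t)\psi_0\|_{L^4_t L^\infty_x} \lesssim \|\psi_0\|_{L^2}$ is equivalent to the boundedness $TT^* : L^{4/3}_t L^1_x \to L^4_t L^\infty_x$, where $TT^* g(t) = \int U(t-s) g(s)\, ds$. By the decay bound above with $p = \infty$, the kernel of $TT^*$ maps $L^1_x \to L^\infty_x$ with norm $|t-s|^{-1/2}$, so the estimate reduces to the Hardy--Littlewood--Sobolev inequality in the time variable: convolution with $|t|^{-1/2}$ maps $L^{4/3}_t \to L^4_t$, since $\frac43 - 1 = \frac12 \cdot \frac{?}{}$ works out to the right scaling ($1 + \frac14 = \frac43 + \frac12$). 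Combining with the trivial $L^\infty_t L^2_x$ bound from unitarity gives the homogeneous estimate $\|U(t)\psi_0\|_S \lesssim \|\psi_0\|_{L^2}$. For the inhomogeneous term, the Duhamel formula $\psi(t) = U(t)\psi_0 + \int_0^t U(t-s) f(s)\, ds$ combined with the Christ--Kiselev lemma (to pass from the full-line operator to the retarded one, legitimate since $4 > 4/3$) yields $\|\int_0^t U(t-s)f(s)\,ds\|_S \lesssim \|f\|_{L^{4/3}_t L^1_x}$. The $L^1_t L^2_x \to S$ piece of the $S'$ norm is handled directly by Minkowski's inequality and the homogeneous bound, and the two contributions are summed to cover $f \in S' = L^1_t L^2_x + L^{4/3}_t L^1_x$.

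The main obstacle, such as it is, is that this is an $L^4_t$ rather than $L^2_t$ Strichartz estimate, so the genuine endpoint $L^2_t L^\infty_x$ (which would be the Keel--Tao endpoint in a higher-dimensional analogy) is \emph{not} being claimed, and indeed fails in this $1$d setting; one must be content with the non-endpoint pair $(4,\infty)$, for which the Christ--Kiselev lemma applies without difficulty precisely because the exponents are strictly ordered. A minor technical point worth a sentence is the low-frequency behavior: since we only work with $\xi$ supported away from issues at zero after the Littlewood-Paley truncation $P_{\geq 0}$, and the propagator is bounded on all such pieces, no extra care beyond the above is needed. In short, the lemma is the standard dispersive-to-Strichartz deduction, and I would present it as such, citing Proposition~\ref{p:bodisp}, interpolation, Hardy--Littlewood--Sobolev, and the Christ--Kiselev lemma, without belaboring the now-routine details.
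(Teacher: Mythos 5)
Your proposal is correct and is exactly the standard dispersive-to-Strichartz deduction ($TT^*$ plus Hardy--Littlewood--Sobolev plus Christ--Kiselev) that the paper invokes without detail, its only added remark being that one could alternatively transfer the bounds from the Schr\"odinger flow via the projections $P_\pm$, except for the $L^4_tL^\infty_x$ component where the Hilbert transform is not bounded on $L^\infty$. Two cosmetic slips: the HLS scaling should read $1+\tfrac14=\tfrac34+\tfrac12$ (not $\tfrac43+\tfrac12$), and in one space dimension $(4,\infty)$ is itself the admissible endpoint and the estimate there is valid, so no Keel--Tao-type endpoint obstruction actually arises.
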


We remark that these Strichartz estimates can also be viewed as a consequence \footnote{ Exept for the $L^4_tL_x^{\infty}$ bound, as the Hilbert transform is not bounded in $L^{\infty}$.}
of the similar estimates for the linear Schr\"odinger equation. This is because the two flows
agree when restricted to functions with frequency localization in $\R^+$.

We also remark that we have the following Besov version of the 
estimates, 
\begin{equation}
\label{strichartzB}
\| \psi\|_{\ell^2 S} \lesssim \|\psi_0 \|_{L^2} + \|f\|_{\ell^2 S'},
\end{equation}
 where 
\[
\| \psi \|_{\ell^2 S}^2 = \sum_k \| \psi _k \|_{ S}^2, \qquad  \| \psi \|_{\ell^2 S'}^2 = \sum_k \| \psi _k \|_{ S'}^2 .
\]

The last property of the linear Benjamin-Ono equation we will use here is the 
bilinear $L^2$ estimate, which is as follows:

\begin{lemma}
\label{l:bi}
Let $\psi^1$, $\psi^2$ be two solutions to the inhomogeneous Schr\"odinger equation with data 
$\psi^1_0$, $\psi^2_0$ and inhomogeneous terms $f^1$ and $f^2$. Assume 
that the sets 
\[
E_i = \{ |\xi|, \xi \in \text{supp } \hat \psi^i \}
\]
are disjoint. Then we have 
\begin{equation}
\label{bi-di}
\| \psi^1 \psi^2\|_{L^2} \lesssim \frac{1}{\text{dist}(E_1,E_2)} 
( \|\psi_0^1 \|_{L^2} + \|f^1\|_{S'}) ( \|\psi_0^2 \|_{L^2} + \|f^2\|_{S'}).
\end{equation}
\end{lemma}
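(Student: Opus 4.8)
The plan is to reduce the bilinear $L^2$ estimate \eqref{bi-di} to the corresponding homogeneous statement, and then to prove the homogeneous statement by a direct computation on the spatial Fourier side. First I would use the Duhamel formula to write each $\psi^i$ as the sum of its homogeneous part $e^{-tH\partial^2}\psi^i_0$ and an inhomogeneous part $\int_0^t e^{-(t-s)H\partial^2} f^i(s)\, ds$. Expanding the product $\psi^1\psi^2$ gives four terms; the purely homogeneous term $(e^{-tH\partial^2}\psi^1_0)(e^{-tH\partial^2}\psi^2_0)$ is handled by the homogeneous bilinear estimate, and the terms involving an inhomogeneity are handled by the same estimate applied to the evolution started at time $s$ (using the time-translation invariance and the fact that the frequency supports, hence $E_1$ and $E_2$, are preserved by the flow), followed by a Minkowski integration in $s$ and the $L^1_t L^2_x + L^{4/3}_t L^1_x$ structure of $S'$. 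This is the routine packaging step; the only mild care needed is that the $L^{4/3}_t L^1_x$ component of $f^i$ must be absorbed using the dispersive/Strichartz bound $\|e^{-tH\partial^2}g\|_{L^4_t L^\infty_x}\lesssim \|g\|_{L^1_x}$ together with an $L^2$ bound, so that one pairs an $L^\infty_x$ factor against an $L^2_x$ factor when estimating $\psi^1\psi^2$ in $L^2_x$ at fixed time.

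The core of the argument is the homogeneous bilinear bound
\[
\| (e^{-tH\partial^2}\psi^1_0)(e^{-tH\partial^2}\psi^2_0) \|_{L^2_{t,x}} \lesssim \frac{1}{\mathrm{dist}(E_1,E_2)} \|\psi^1_0\|_{L^2}\|\psi^2_0\|_{L^2}.
\]
For this I would write the product on the Fourier side: with $\Phi(\xi)=-\xi|\xi|$ the dispersion relation, the space-time Fourier transform of the product at frequency $(\tau,\xi)$ is an integral over $\xi=\xi_1+\xi_2$ of $\hat\psi^1_0(\xi_1)\hat\psi^2_0(\xi_2)$ against $\delta(\tau-\Phi(\xi_1)-\Phi(\xi_2))$. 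By Plancherel in $(t,x)$, $\|\psi^1\psi^2\|_{L^2_{t,x}}^2$ equals the $L^2_{\tau,\xi}$ norm squared of this expression, and after fixing $\xi$ one changes variables from $\xi_1$ to $\mu=\Phi(\xi_1)+\Phi(\xi_2)$ along the line $\xi_1+\xi_2=\xi$. The Jacobian of this change of variables is $|\Phi'(\xi_1)-\Phi'(\xi_2)| = 2\big||\xi_1|-|\xi_2|\big|$, which is bounded below precisely by $2\,\mathrm{dist}(E_1,E_2)$ under the disjointness hypothesis on $E_1$ and $E_2$. Hence the resolvent-type weight coming from the $\delta$-function is controlled by $\mathrm{dist}(E_1,E_2)^{-1}$, and a Cauchy–Schwarz/Fubini argument then yields the claimed bound with the product of $L^2$ norms of the data.

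The main obstacle, such as it is, is bookkeeping rather than conceptual: one has to make sure that the change of variables $\xi_1\mapsto\mu$ is legitimate, i.e. that on the support of the data the map is a diffeomorphism onto its image with the stated Jacobian lower bound. Here the kink of $\Phi$ at the origin is the only place where $\Phi'$ is discontinuous, but since $E_1$ and $E_2$ are disjoint and closed (and at least one is bounded away from $0$ in the relevant regime), the quantity $\big||\xi_1|-|\xi_2|\big|$ stays uniformly positive, so the singularity is harmless; on each of the (at most four) sign-configurations of $(\xi_1,\xi_2)$ the phase is smooth and the computation is standard. I would also note that this is exactly the classical bilinear Strichartz estimate for Schr\"odinger, transplanted via the identification of the Benjamin-Ono and Schr\"odinger flows on frequency half-lines noted earlier; one could alternatively just cite that and check that the constant is the distance between the frequency supports. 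Either way the argument is short once the Duhamel reduction is in place.
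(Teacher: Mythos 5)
Your overall route --- Duhamel reduction to the homogeneous case plus a Fourier-side proof of the homogeneous bilinear bound via the delta measure on the dispersion surface --- is exactly the standard path, and is in essence what the paper does by simply citing Tao's multilinear weighted convolution estimates; your Jacobian computation $|\Phi'(\xi_1)-\Phi'(\xi_2)|=2\bigl||\xi_1|-|\xi_2|\bigr|\geq 2\,\mathrm{dist}(E_1,E_2)$ is correct, including the observation that the kink of $\Phi$ at $\xi=0$ is harmless under the disjointness hypothesis. However, there is a quantitative slip you should not paper over: the weight $\mathrm{dist}(E_1,E_2)^{-1}$ produced by the delta integration enters the \emph{square} of the space-time $L^2$ norm after Cauchy--Schwarz, so your argument yields $\|\psi^1\psi^2\|_{L^2}\lesssim \mathrm{dist}(E_1,E_2)^{-1/2}\|\psi^1_0\|_{L^2}\|\psi^2_0\|_{L^2}$, not the power $\mathrm{dist}^{-1}$ displayed in \eqref{bi-di}. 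The $\mathrm{dist}^{-1/2}$ bound is sharp (two wave packets with unit-width frequency supports and relative group velocity $\sim N$ cross once and give $\|\psi^1\psi^2\|_{L^2}\sim N^{-1/2}$), and it is the form the paper actually uses --- compare Corollary \ref{c:bi-jk}, Corollary \ref{c:bi-kk}, and the bounds \eqref{bilinear-small}, \eqref{boot2} --- so the exponent printed in the lemma appears to be a typo; but as written your claim that Cauchy--Schwarz/Fubini ``yields the claimed bound'' hides this square root, and no argument can recover the stated $\mathrm{dist}^{-1}$.

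The second genuine gap is in your treatment of the inhomogeneous terms. For the $L^1_tL^2_x$ component of $S'$ the Minkowski superposition of homogeneous waves launched at time $s$ is indeed routine, since the frequency supports are preserved and $f^i(s)\in L^2_x$ can re-enter the homogeneous bilinear estimate. But for the $L^{4/3}_tL^1_x$ component the linear estimate you invoke, $\|e^{-tH\partial^2}g\|_{L^4_tL^\infty_x}\lesssim\|g\|_{L^1_x}$, is false: the dispersive bound only gives $t^{-1/2}\|g\|_{L^1_x}$, which is not locally $L^4$ in time (equivalently, the inequality is incompatible with scaling, as an approximate delta function shows). Moreover, even a correct substitute that pairs an $L^4_tL^\infty_x$ Strichartz factor against an $L^\infty_tL^2_x$ factor would forfeit the frequency-separation gain, which is the entire content of the lemma. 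Handling that component honestly requires a different device (a Christ--Kiselev-type argument for the retarded bilinear estimate, or a transfer through the homogeneous bound in an $X^{s,b}$ or $U^2/V^2$ framework). To be fair, the paper itself leaves this at the level of ``obtained in a standard manner'' with a reference, and in all of its applications the forcing is measured in $L^1_tL^2_x$, for which your superposition argument suffices; but if you state the lemma with the full $S'$ norm, this step needs an actual proof rather than the false linear estimate.
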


These bounds also follow from the similar bounds for the Schr\"odinger equation, where only the separation 
of the supports of the Fourier transforms is required. They can be obtained in a standard manner  from the similar bound for products of solutions to the homogenous equation, for which we reffer the reader to \cite{tao-m}.

One corollary of this applies in the case when we look at the product
of two solutions which are supported in different dyadic regions:
\begin{corollary}\label{c:bi-jk}
Assume that $\psi^1$ and $\psi^2$ as above are supported in dyadic regions $\vert \xi\vert \approx 2^j$ and $\vert \xi\vert \approx 2^k$, $\vert j-k\vert >2$, then
\begin{equation}
\label{bi}
\| \psi^1 \psi^2\|_{L^2} \lesssim 2^{-\frac{\max\left\lbrace j,k \right\rbrace  }{2}}
( \|\psi_0^1 \|_{L^2} + \|f^1\|_{S'}) ( \|\psi_0^2 \|_{L^2} + \|f^2\|_{S'}).
\end{equation}
\end{corollary}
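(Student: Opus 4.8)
The plan is to deduce Corollary~\ref{c:bi-jk} directly from Lemma~\ref{l:bi} by checking its hypothesis for dyadic pieces and tracking the resulting constant. First I would observe that if $\hat\psi^1$ is supported in $\{|\xi| \approx 2^j\}$ and $\hat\psi^2$ in $\{|\xi| \approx 2^k\}$ with $|j-k| > 2$, then the associated frequency-modulus sets $E_1 \subset \{|\xi| \approx 2^j\}$ and $E_2 \subset \{|\xi|\approx 2^k\}$ are indeed disjoint intervals. Without loss of generality take $j < k$, so $k = \max\{j,k\}$; then the two sets live in comparably separated annuli, and $\mathrm{dist}(E_1,E_2) \gtrsim 2^k$, since the lower edge of the $2^k$-annulus sits at distance $\sim 2^k$ from the entire $2^j$-annulus once the dyadic gap exceeds $2$. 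Plugging $\mathrm{dist}(E_1,E_2) \gtrsim 2^{\max\{j,k\}}$ into \eqref{bi-di} immediately yields \eqref{bi}.

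The only genuinely routine point to be careful about is the geometry of the annuli: I need the dyadic separation $|j-k|>2$ to guarantee not just disjointness but quantitative separation on the order of the larger frequency. Concretely, with the Littlewood-Paley conventions fixed earlier, $P_j$ is supported in $|\xi| \in [2^{j-1}, 2^{j+1}]$, so for $j < k$ with $|j-k| > 2$ we have $2^{j+1} \le 2^{k-2} < 2^{k-1}$, hence the gap between the supports is at least $2^{k-1} - 2^{j+1} \ge 2^{k-1} - 2^{k-2} = 2^{k-2} \gtrsim 2^{\max\{j,k\}}$. This is the step that uses the hypothesis $|j-k|>2$ in an essential way; with a smaller gap the annuli could still touch or the separation could be much smaller than the larger frequency scale, which would weaken the constant.

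There is no real obstacle here: the corollary is a specialization of the lemma, and the proof is a two-line verification. If I wanted to be completely self-contained I would note that one may always replace $\psi^i$ by $\tilde P_{k_i}\psi^i$-type localizations without changing anything, so that the frequency support hypotheses in Lemma~\ref{l:bi} hold exactly; but since the statement already assumes $\psi^1,\psi^2$ are themselves frequency-localized, this is unnecessary. I would simply write: "This is an immediate consequence of Lemma~\ref{l:bi}, since under the stated dyadic separation the sets $E_1, E_2$ are disjoint with $\mathrm{dist}(E_1,E_2) \gtrsim 2^{\max\{j,k\}}$."
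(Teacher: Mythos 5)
Your proposal is correct and follows exactly the route the paper intends: Corollary~\ref{c:bi-jk} is a direct specialization of Lemma~\ref{l:bi}, obtained by noting that with the stated Littlewood--Paley conventions the dyadic annuli with $|j-k|>2$ give $\mathrm{dist}(E_1,E_2)\gtrsim 2^{\max\{j,k\}}$ and inserting this into \eqref{bi-di}. The quantitative separation check $2^{k-1}-2^{j+1}\ge 2^{k-2}$ is precisely the only detail that needs verifying, and you have done it correctly.
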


Another useful  case is when we look at the product
of two solutions which are supported in the same  dyadic region, but with frequency separation:
\begin{corollary}\label{c:bi-kk}
Assume that $\psi^1$ and $\psi^2$ as above are supported in the dyadic region 
$\vert \xi\vert \approx 2^k$, but have $O(2^k)$ frequency separation between their supports.
Then 
\begin{equation}
\label{bi-kk}
\| \psi^1 \psi^2\|_{L^2} \lesssim 2^{-\frac{k}{2}}
( \|\psi_0^1 \|_{L^2} + \|f^1\|_{S'}) ( \|\psi_0^2 \|_{L^2} + \|f^2\|_{S'}).
\end{equation}
\end{corollary}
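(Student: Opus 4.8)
The plan is to deduce Corollary~\ref{c:bi-kk} directly from Lemma~\ref{l:bi}, in essentially the same way Corollary~\ref{c:bi-jk} follows, by decomposing each of the two factors into finitely many pieces whose frequency supports are genuinely separated in absolute value. First I would set up the geometry: since $\psi^1$ and $\psi^2$ are both supported in $\{|\xi|\approx 2^k\}$ but their supports are $O(2^k)$-separated, I want to subdivide the annulus $\{|\xi|\approx 2^k\}$ (on both the positive and negative frequency sides) into $O(1)$ many subintervals $I_m$ of length $\sim \delta 2^k$ for a suitable small fixed $\delta$. Writing $\psi^i = \sum_m \psi^i_m$ with $\psi^i_m$ the solution with data $P_{I_m}\psi^i_0$ and inhomogeneity $P_{I_m}f^i$ (the Fourier projections commute with the linear flow, so each $\psi^i_m$ is again a free-plus-forced solution supported in $I_m$), I reduce to estimating $\|\psi^1_m \psi^2_{m'}\|_{L^2}$ for each of the $O(1)$ pairs $(m,m')$.

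Next I would split the pairs $(m,m')$ into two classes. If the intervals $I_m$ and $I_{m'}$ give rise to separated sets $E_1=\{|\xi|:\xi\in I_m\}$ and $E_2=\{|\xi|:\xi\in I_{m'}\}$ — which happens precisely when $I_m$ and $I_{m'}$ are not near-reflections of each other across $0$ and are not adjacent — then Lemma~\ref{l:bi} applies directly and gives the bound with constant $1/\mathrm{dist}(E_1,E_2)\lesssim 2^{-k}$, which is even better than claimed. The only genuinely problematic pairs are those where $I_m$ and $I_{m'}$ sit at nearly the same location in $|\xi|$, i.e. either $I_m \approx I_{m'}$ or $I_m \approx -I_{m'}$. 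But here is where the hypothesis of $O(2^k)$ frequency \emph{separation between the supports of $\psi^1$ and $\psi^2$} enters: it forces that for such near-coincident $|\xi|$-locations, at least one of $\psi^1_m$, $\psi^2_{m'}$ is actually zero (after choosing $\delta$ small relative to the separation constant), because $\psi^1$ and $\psi^2$ cannot both have frequency content in overlapping subintervals. Hence those bad pairs simply do not occur, and only the good pairs remain.

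Finally I would reassemble: summing the $O(1)$ pairwise bounds $\|\psi^1_m\psi^2_{m'}\|_{L^2}\lesssim 2^{-k/2}(\|\psi^1_0\|_{L^2}+\|f^1\|_{S'})(\|\psi^2_0\|_{L^2}+\|f^2\|_{S'})$ and using that $\sum_m \|P_{I_m}\psi^i_0\|_{L^2}\lesssim \|\psi^i_0\|_{L^2}$ (and likewise for the $S'$ norm of the inhomogeneity, up to the trivial $O(1)$ loss from finitely many pieces, using the Besov-type bound \eqref{strichartzB} or just crude summation over $O(1)$ terms), we obtain \eqref{bi-kk}. The main obstacle — really the only subtlety — is the bookkeeping in the second step: making precise the claim that $O(2^k)$ separation of the \emph{supports} prevents any overlapping subinterval pair, which requires choosing the subdivision scale $\delta$ correctly and being careful with the positive/negative frequency matching inherent in the map $\xi\mapsto|\xi|$. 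Everything else is routine decomposition and summation over finitely many terms.
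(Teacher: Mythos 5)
There is a genuine gap, and it sits exactly at the point you flag as "the only subtlety": the near-reflection pairs $I_m\approx -I_{m'}$. Your claim that the $O(2^k)$ separation between the supports of $\psi^1$ and $\psi^2$ forces one of $\psi^1_m,\psi^2_{m'}$ to vanish for such pairs is false if "separation of supports" is read literally in $\xi$: the data $\hat\psi^1_0$ concentrated near $\xi=+2^k$ and $\hat\psi^2_0$ concentrated near $\xi=-2^k$ have $\xi$-supports separated by $\sim 2^{k+1}$, yet the reflection pairs are exactly the ones that survive. Moreover, for such a configuration the estimate \eqref{bi-kk} itself fails: the Benjamin--Ono group velocity is $2|\xi|$, so both packets travel to the right with speed $\approx 2^{k+1}$, there is no transversality, and for two unit-$L^2$ wave packets at $\pm 2^k$ one has $\|\psi^1\psi^2\|_{L^2_{t,x}}\sim 1$ on a unit time interval, not $2^{-k/2}$. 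This is precisely why Lemma~\ref{l:bi} is formulated in terms of the sets $E_i$ of \emph{absolute values} of frequencies rather than the supports themselves, and why later in the paper the admissible same-dyadic bilinear inputs are required to have $2^{k-4}$ separation between the absolute values of the frequencies in their supports. So no choice of the subdivision scale $\delta$ can rescue the argument as written; the issue is not bookkeeping but the hypothesis itself.

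With the hypothesis read as the paper intends --- $\mathrm{dist}(E_1,E_2)\gtrsim 2^k$, where $E_i=\{|\xi|:\xi\in\operatorname{supp}\hat\psi^i\}$ --- the reflection pairs are excluded by assumption, but then your whole decomposition into subintervals is unnecessary: Lemma~\ref{l:bi} applies directly with $\mathrm{dist}(E_1,E_2)\gtrsim 2^k$ and yields \eqref{bi-kk} (indeed with room to spare), which is how the paper treats the corollary. In short: either the reading of the hypothesis is wrong and the key step (and the statement) fails, or it is right and the proof collapses to a one-line application of the lemma; in neither case does the subdivision-plus-case-analysis argument add anything.
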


\section{Normal form analysis and a-priori bounds} 

In this section we establish apriori $L^2$ bounds for regular
($H^3_x$) solutions for the Cauchy problem \eqref{bo}.  First, we
observe from the scale invariance \eqref{si} of the equation
\eqref{bo} that it suffices to work with solutions for which the $L^2$
norm is small, in which case it is natural to consider these solutions on the time interval $[-1,1]$ (i.e., we set $T := 1$).

 Precisely we may assume that the initial satisfies
 \begin{equation}
 \label{small}
 \Vert \phi (0)\Vert_{L^2_x}\leq \epsilon .
 \end{equation}
 Then our main apriori estimate is as follows:
 
\begin{theorem}\label{apriori} Let $\phi$ be an $H^3_x$ solution to
  \eqref{bo} with small initial data as in \eqref{small}. Let $\left\lbrace
    c_k\right\rbrace _{k=0}^{\infty}\in l^2 $ so that $\epsilon c_k$
  is a frequency envelope for the initial $\phi(0)$ in $L^2$.
 Then we have the Strichartz bounds
\begin{equation}\label{u-small}
 \Vert \phi_k \Vert_{S^{0}([-1,1] \times \mathbf{R})} \lesssim \epsilon c_k,
\end{equation}
as well as the bilinear bounds
 \begin{equation}
 \label{bilinear-small}
  \Vert \phi_j \cdot \phi_k\Vert_{L^2}\lesssim 2^{-\frac{\max \left\lbrace j,k\right\rbrace}{2} }\epsilon^2 c_k\, c_j,  \qquad j\neq k.
   \end{equation}
   \end{theorem}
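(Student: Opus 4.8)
The plan is to run a bootstrap argument on the time interval $[-1,1]$, exactly in the spirit of Tao's renormalization approach, but organized around the paradifferential/normal form splitting advertised in the introduction. I would first fix the bootstrap hypothesis: assume that \eqref{u-small} and \eqref{bilinear-small} hold with the implicit constant $C$ replaced by a larger constant $2C$ (or with $\epsilon$ replaced by $C\epsilon$), and then show that under the smallness assumption \eqref{small} this can be improved back to the original constant. Since the solution is a priori in $C([-1,1];H^3)$, all quantities involved are finite and continuous in $T$, so this closes. The linear input is the Strichartz estimate \eqref{strichartz} (in its Besov form \eqref{strichartzB}) together with the bilinear $L^2$ bounds of Lemma~\ref{l:bi} and Corollaries~\ref{c:bi-jk}, \ref{c:bi-kk}; the point is that we may only afford to use the bilinear bounds in a mild (logarithm-removing) way, never paying a positive power of the high frequency.

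The core of the argument is the frequency-localized equation for $\phi_k = P_k \phi$, namely
\[
(\partial_t + H\partial_x^2)\phi_k = \frac12 P_k \partial_x(\phi^2).
\]
I would split the right-hand side according to the interaction type: the $\mathrm{low}\times\mathrm{high}$ piece where the output frequency $2^k$ is comparable to the largest input frequency — this is the dangerous paradifferential part, $\sim \phi_{<k-4}\,\partial_x \phi_k$ — versus everything else ($\mathrm{high}\times\mathrm{high}\to\mathrm{low}$ and the balanced pieces near frequency $2^k$), which is the ``milder'' part. For the milder part, the derivative $\partial_x$ lands in a way that either falls on a low frequency (and is then harmless) or produces a genuine high-high-to-low gain via Corollaries~\ref{c:bi-jk}--\ref{c:bi-kk}; here a direct quadratic normal form transformation $\phi_k \mapsto \phi_k + B_k(\phi,\phi)$ removes the quadratic term at the price of a cubic term, and the cubic term is estimated by Strichartz ($L^4_tL^\infty_x$ twice, $L^\infty_t L^2_x$ once) combined with an $L$-form bilinear bound to absorb the residual logarithm — this is where Lemma~\ref{commutator} is used to see that the normal form correction $B_k$ is bounded on $L^2$ (commutator structure shifts the derivative onto a low frequency). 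For the paradifferential part, the direct normal form diverges, so instead I would conjugate by the flow generated by the low-frequency part: writing $\phi_k = e^{i\Psi_{<k}} w_k$ with $\Psi_{<k}$ a suitable antiderivative of $\phi_{<k-4}$, the leading paradifferential term is cancelled and $w_k$ solves an equation whose right-hand side is again cubic-and-better, amenable to the same Strichartz + mild bilinear estimates. One then transfers the bounds on $w_k$ back to $\phi_k$ since the conjugation is bounded on all the relevant spaces.

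Once the Strichartz bound \eqref{u-small} is re-derived for each $k$ with the good constant, the bilinear bound \eqref{bilinear-small} follows by feeding the just-obtained control of $\phi_j$, $\phi_k$ as solutions of \eqref{bo-lin-inhom} with source $f = \frac12 P_k\partial_x(\phi^2) \in S'$ (whose $S'$ norm is controlled by the Strichartz bounds, with the slowly varying envelope keeping the sums convergent) into Corollary~\ref{c:bi-jk}; the factor $2^{-\max\{j,k\}/2}$ is exactly what that corollary provides, and the envelope product $c_jc_k$ is reproduced because the source terms inherit envelope bounds from \eqref{u-small}. I expect the main obstacle to be the paradifferential piece: making the renormalization $\phi_k = e^{i\Psi_{<k}}w_k$ precise at the level of a paradifferential (rather than exact) conjugation, controlling the commutator errors it generates, and verifying that the errors are perturbative using \emph{only} Strichartz norms and a single mild application of the bilinear estimate — in particular checking that no logarithmic divergence in the frequency gap survives. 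The bookkeeping with frequency envelopes (summing $c_j/c_k \le 2^{\delta|j-k|}$ against the dyadic gains) is routine but must be done carefully to ensure every sum converges.
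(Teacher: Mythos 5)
Your overall architecture is the same as the paper's: a continuity/bootstrap argument in which both \eqref{u-small} and \eqref{bilinear-small} are assumed with a large constant, the quadratic nonlinearity is split into a paradifferential piece $\phi_{<k}\partial_x\phi_k$ and a milder piece, the milder piece is removed by a bounded quadratic correction $B_k(\phi,\phi)$ (bounded precisely because of the commutator structure of Lemma~\ref{commutator}), and the paradifferential piece is removed by conjugation with $e^{-i\Phi_{<k}}$, after which the renormalized variable solves a Schr\"odinger equation with cubic (and quartic) sources estimated by two Strichartz norms and one mild bilinear bound. Up to that point your sketch matches the paper, including the caveats you flag (the symmetrization of the paradifferential operator, the errors generated by the exact rather than paradifferential conjugation, and the quartic terms coming from the equation satisfied by $\Phi$).

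The genuine gap is in your closing of the bilinear estimate \eqref{bilinear-small}. You propose to feed $\phi_j,\phi_k$ into Corollary~\ref{c:bi-jk} viewing them as solutions of \eqref{bo-lin-inhom} with source $f=\tfrac12 P_k\partial_x(\phi^2)$, claiming its $S'$ norm is controlled by the Strichartz bounds. It is not: the paradifferential portion of that source costs a full derivative, $\|\phi_{<k}\,\partial_x\phi_k\|_{L^1_tL^2_x}\sim 2^{k}\epsilon^2 c_k$, so the right-hand side of \eqref{bi} would carry an unacceptable factor $2^{k}$ — this derivative loss is exactly the non-perturbative feature that forced the normal form/renormalization in the first place, and it cannot be wished away at the bilinear stage either. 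The paper instead applies Lemma~\ref{l:bi} to the renormalized variables, i.e.\ to $\tilde P_j\psi_j\cdot\tilde P_k\psi_k$, whose Schr\"odinger sources are genuinely cubic and bounded in $L^1L^2$; the price is that $\psi_j,\psi_k$ are no longer sharply frequency localized, so one must insert enlarged multipliers $\tilde P_j,\tilde P_k$ with separated supports and then transfer the bound to $\phi_j^+\phi_k^+$ by estimating $\tilde P_j\psi_j-\phi_j^+e^{-i\Phi_{<j}}$ via \eqref{psi-err} and a commutator expansion (Lemma~\ref{commutator} again), converting the naive $2^{-k}$ error gain into the needed $2^{-j/2}2^{-k/2}\,c_jc_k$ structure. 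Without this transfer step your argument does not produce \eqref{bilinear-small}; and since the bilinear bounds are themselves needed inside the cubic estimates (to kill the logarithmic divergence in the high-high-high case), they must be closed within the same bootstrap rather than derived afterwards from the already-improved Strichartz bounds.
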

   Here, the implicit constants do not depend on the $H^3_x$ norm of
   the initial data $\phi(0)$, but they will depend on $\Vert
   \phi(0)\Vert_{L^2}$.  A standard iteration method will not work,
   because the linear part of the Benjamin-Ono equation does not have
   enough smoothing to compensate for the derivative in the
   nonlinearity.  To resolve this difficulty we use ideas related to
   the normal form method, first introduced by Shatah in \cite{shatah}
   in the context of dispersive PDEs. The main principle in the
   normal form method is to apply a quadratic correction to the
   unknown in order to replace a nonresonant quadratic nonlinearity by
   a milder cubic nonlinearity. Unfortunately this method does not
   apply directly here, because some terms in the quadratic correction
   are unbounded, and so are some of the cubic terms generated by the
   correction. To bypass this issue, here we develop a more favorable
   implementation of normal form analysis. This is carried out in two
   steps:
   \begin{itemize}
   \item  a partial normal form transformation which is bounded and removes some of the quadratic nonlinearity
  \item a conjugation via a suitable exponential (also called gauge transform, \cite{tao}) which removes in a bounded way the remaining part of the quadratic nonlinearity.
   \end{itemize}
 This will transform the Benjamin-Ono equation \eqref{bo} into an equation where the the quadratic terms have been removed  and replaced by cubic perturbative terms.

\subsection{The quadratic normal form analysis} In this subsection we formally derive the
normal form transformation for the Benjamin-Ono equation, \eqref{bo}.
Even though we will not make use of it directly we will still use portions 
of it to remove certain ranges of frequency interactions from
the quadratic nonlinearity. 

Before going further, we emphasizes that by a \emph{normal form} we
refer to any type of transformation which will remove nonresonant
quadratic terms; all such transformations are uniquely determined up
to quadratic terms.

The normal form idea goes back to Birkhoff which used it in the
context of ordinary differential equations.  Later, Shatah
\cite{shatah} was the first to implement it in the context of partial
differential equations.  In general, the fact that one can compute
such a normal form for a partial differential equation with quadratic
nonresonant interactions is not sufficient, unless the transformation
is invertible, and, as seen in other works, in addition, good energy
estimates are required. In the context of quasilinear equations one
almost never expects the normal form transformation to be bounded, and
new ideas are needed. In the Benjamin-Ono setting such ideas were
first introduced by Tao \cite{tao} whose renormalization is a partial normal
form transformation in disguise. More recently, other ideas have been
introduced in the quasilinear context by Wu \cite{wu}, Hunter-Ifrim
\cite{hi}, Hunter-Ifrim-Tataru \cite{BH}, Alazard-Delort \cite{ad, ad1} and Hunter-Ifrim-Tataru
\cite{hit}.

In particular, for the Benjamin-Ono equation we seek a quadratic transformation
\[
\tilde{\phi} =\phi +B(\phi, \phi),
\] 
so that the new variable $\tphi$ solves an equation with a cubic nonlinearity, 
\[
(\partial_t +H\partial_x^2)\tphi=Q(\phi, \phi, \phi),
\]
where $B$ and $Q$ are translation invariant bilinear, respectively trilinear forms. 

A direct computation yields an explicit formal spatial expression of the normal form transformation:
\begin{equation}
\label{bo nft}
\tilde{\phi}=\phi -\frac{1}{4}H\phi \cdot \partial^{-1}_x\phi  -\frac{1}{4}H\left( \phi \cdot \partial^{-1}_x\phi \right) .
\end{equation}
Note that at low frequencies \eqref{bo nft} is not invertible, which
tends to be a problem if one wants to apply the normal form
transformation directly.

\subsection{A modified normal form analysis}
\label{s:local}

We begin by writing the Benjamin-Ono equation \eqref{bo} in a
paradifferential form, i.e., we localize ourselves at a frequency $2^k$,
and then project the equation either onto negative or positive
frequencies:
\[
(\partial_t \mp  i\partial^{2}_{x} )\phi_k^{\pm}=P_k^{\pm} (\phi \cdot \phi_{x}).
\]
Since $\phi$ is real, $\phi^- $ is the complex conjugate of $\phi^+$ so it suffices to work with the latter. 

Thus, the Benjamin-Ono equation for the positive frequency Littlewood-Paley components $\phi^{+}_k$ is
\begin{equation}
\label{eq-loc}
\begin{aligned}
&\left( i\partial_{t}  + \partial^{2}_{x} \right)  \phi^+_{k}=iP_k ^{+}(\phi \cdot \phi_{x}).
\end{aligned}
\end{equation}

Heuristically, the worst term in $P_k^+(\phi \cdot \phi_{x})$ occurs
when $ \phi _x$ is at high frequency and $\phi$ is at low
frequency. We can approximate $P_k^+ (\phi \cdot \phi_x)$, by its
leading paradifferential component $\phi_{<k} \cdot \partial_x
\phi_{k}^+$; the remaining part of the nonlinearity will be harmless.
More explicitly we can eliminate it by means of a bounded normal form
transformation.

We will extract out the main term
$i\phi_{<k}\cdot \partial_{x}\phi_k^+$ from the right hand side
nonlinearity and move it to the left, obtaining
\begin{equation}
\label{eq-lin-k}
\left( i\partial_t+\partial_x^2 -i\phi_{<k}\cdot \partial_{x}\right)  \phi^+_k =i P_{k}^{+}\left( \phi_{\geq k} \cdot \phi _{x}\right) +i\left[ P_{k}^{+}\, , \, \phi_{<k} \right]\phi_x.
\end{equation}
For reasons which will become apparent later on when we do the
exponential conjugation, it is convenient to add an additional lower
order term on the left hand side (and thus also on the right).
Denoting by $A^{k,+}_{BO} $ the operator
\begin{equation}
\label{op Abo}
A^{k, +}_{BO}:= i\partial_t +\partial^2 _x-i\phi_{<k}\cdot \partial_{x}  +\frac{1}{2}  \left( H+i \right)\partial_x\phi _{<k}
\end{equation} 
we rewrite the equation \eqref{eq-lin-k} in the form
\begin{equation}
\label{eq-lin-k+}
A^{k,+}_{BO} \ \phi^+_k =i P_{k}^{+}\left( \phi_{\geq k} \cdot \phi _{x}\right) +i\left[ P_{k}^{+}\, , \, \phi_{<k} \right]\phi_x +\frac{1}{2}  \left( H+i \right)\partial_x\phi _{<k} \cdot \phi^+_k.
\end{equation}
Note the key property that the operator $A^{k,+}_{BO} $ is symmetric,
which in particular tells us that the $L^2$ norm is conserved in the
corresponding linear evolution.

 The case $k=0$ is mildly different  in this discussion. There we need no paradifferential component,
and also we want to avoid the operator $P_0^+$ which does not have a smooth symbol.
Thus we will work with the equation
 \begin{equation}
\label{eq-lin-0}
(\partial_t + H \partial_x^2)\phi_0 = P_0( \phi_{0} \phi_x) + P_0( \phi_{>0} \phi_x),
\end{equation}
where the first term on the right is purely a low frequency term and will play only a perturbative role.

The next step is to eliminate the terms on the right hand side of
\eqref{eq-lin-k+} using a normal form transformation
 \begin{equation}
\label{partial nft}
\begin{aligned}
 \tilde{\phi}_k^+:= \phi^+_k +B_{k}(\phi, \phi).
\end{aligned}
\end{equation} 
Such a transformation is easily computed and formally is given by the expression
\begin{equation}
\label{bilinear nft}
\begin{aligned}
B_{k}(\phi, \phi) =&\frac{1}{2}HP_{k}^{+}
\phi \cdot \partial_{x}^{-1}P_{<k}\phi-\frac{1}{4}P_{k} ^{+}\left( H\phi\cdot \partial_{x}^{-1}\phi\right)
 -\frac{1}{4}P_{k}^{+}H\left( \phi \cdot \partial^{-1}_x\phi\right).
\end{aligned}   
\end{equation}
One can view this as a subset of the normal form transformation
computed for the full equation, see \eqref{bo nft}. Unfortunately, as
written, the terms in this expression are not well defined because
$\partial^{-1}_x \phi$ is only defined modulo constants. To avoid this
problem we separate the low-high interactions which yield a well
defined commutator, and we rewrite $B_{k}(\phi, \phi)$ in a better
fashion as
\begin{equation}
\label{commutator B_k}
B_{k}(\phi, \phi)=-\frac{1}{2}\left[ P^+_kH\, , \, \partial^{-1}_x \phi_{<k} \right]\phi -\frac{1}{4}P^+_k \left( H\phi \cdot \partial_x^{-1}\phi_{\geq k}\right)  
-\frac{1}{4}P^+_k H \left(\phi \cdot \partial_x^{-1}\phi_{\geq k}\right).
\end{equation}

In the case $k=0$ we will keep the first term on the right and apply a quadratic correction to remove the second.
This yields
\begin{equation}
\label{commutator B_0}
B_{0}(\phi, \phi)= -\frac{1}{4}P^+_0 \left[ H\phi \cdot \partial_x^{-1}\phi_{\geq 1}\right]  
-\frac{1}{4}P^+_0 H \left[\phi \cdot \partial_x^{-1}\phi_{\geq 1}\right].
\end{equation}

\begin{remark} The normal form transformation associated to
  \eqref{eq-loc} is the normal form derived in \eqref{bo nft}, but
  with the additional $P_k^+$ applied to it.  Thus, the second and the
  third term in \eqref{bilinear nft} are the projection $P^+_k$ of
  \eqref{bo nft}, which , in particular, implies that the linear
  Schr\"odinger operator $i\partial_t+\partial_x^2$ applied to these
  two terms will eliminate entirely the nonlinearity $P_k^+ (\phi
  \cdot\phi_{x})$.  The first term in \eqref{bilinear nft} introduces
  the paradifferential corrections moved to the left of
  \eqref{eq-lin-k+}, and also has the property that it removes the
  unbounded part in the second and third term.
\end{remark}

Replying $\phi^+_k$ with $\tilde{\phi}^+_k $ removes all the quadratic
terms on the right and leaves us with an equation of the form
 \begin{equation}
 \label{eq after 1nft}
  A^{k,+}_{BO} \, \tilde{\phi}^{+}_k =Q^{3}_k(\phi, \phi, \phi ) ,
 \end{equation}
 where $Q^{3}_k(\phi, \phi, \phi)$ contains only cubic  terms in $\phi$. We will examine $Q^3_k(\phi, \phi, \phi)$ in greater detail later in Lemma~\ref{l:q3}, where
 its full expression is given.
 
  The case $k=0$ is again special. Here the first normal form transformation does not eliminate the low-low frequency interactions, and our intermediate  equation has the form
  \begin{equation}
 \label{eq after 1nft-0}
 (i\partial_t +\partial^2_x) \, \tilde{\phi}^{+}_0 =Q^2_0 (\phi, \phi) +Q^{3}_0(\phi, \phi, \phi ),
 \end{equation}
where $Q^2_0$ contains all the low-low frequency interactions
\[
Q^2_0 (\phi, \phi):=P_0^+ \left(  \phi_{0}\cdot \phi_x\right). 
\]

The second stage in our normal form analysis is to perform a second bounded normal form transformation that will remove the paradifferential terms in
the left hand side of \eqref{eq after 1nft}; this will be a renormalization, following the idea introduced by Tao (\cite{tao}). To achieve this we  introduce and initialize the spatial primitive $\Phi(t, x)$ of $\phi (t,x)$, exactly as in Tao \cite{tao}.
 It turns out that $\Phi (t,x)$ is necessarily a real valued function that  solves the equation
\begin{equation}
\label{Phi}
\Phi_{t}+H\Phi_{xx}=\Phi_{x}^2,
\end{equation}
which holds globally in time and space. Here, the initial condition imposed is $\Phi(0,0)=0$. Thus,  
\begin{equation}
\label{antiderivative}
\Phi_x (t,x)= \frac{1}{2}\phi(t,x).
\end{equation}
The idea in \cite{tao} was that in order to get bounds on $\phi$ it suffices to obtain appropriate bounds 
 on $\Phi(t,x)$ which are one higher degree of  reqularity as \eqref{antiderivative} suggests. 
Here we instead use $\Phi$ merely in an auxiliary role, in order to define the second normal form 
transformation. This is
 \begin{equation}
 \label{conjugation}
 \displaystyle{\psi_k^+:=\tilde{\phi}_k^{+} \cdot    e^{-i\Phi_{<k}}}.
 \end{equation}

 The transformation \eqref{conjugation} is akin to a Cole-Hopf
 transformation, and expanding it up to quadratic terms, one observes
 that the expression obtained works as a normal form transformation,
 i.e., it removes the paradifferential quadratic terms.  The
 difference is that the exponential will be a bounded transformation,
 whereas the corresponding quadratic normal form is not.  One also
 sees the difference reflected at the level of cubic or higher order
 terms obtained after implementing these transformation (obviously
 they will differ).

  By applying this \emph{Cole-Hopf type} transformation, we  rewrite  the equation \eqref{eq after 1nft}
as a     a nonlinear Schr\"odinger equation for our final normal form variable $\psi_k$,   with only cubic and quartic nonlinear terms:
 \begin{equation}
 \label{conjugare}
 \begin{aligned}
(i\partial_t +\partial^2_x)\, \psi_k^+ = [\tilde{Q_k}^{3}(\phi, \phi, \phi) + \tilde{Q_k}^{4}(\phi, \phi, \phi,\phi)] e^{-i\Phi_{<k}},
 \end{aligned}
 \end{equation}
 where $\tilde{Q}_k^3$ and $\tilde Q_k^4$ contain only cubic,
 respectively quartic terms; these are also computed in
 Lemma~\ref{l:q3}.
 
 The case $k=0$ is special here as well, in that no renormalization is
 needed. There we simply set $\psi_0 =\tilde{\phi}_0$, and use the
 equation \eqref{eq after 1nft-0}.

 This concludes the algebraic part of the analysis. Our next goal is
 study the analytic properties of our multilinear forms:

\begin{lemma}
\label{l:q3}
The quadratic form $B_k$ can be expressed as
\begin{equation}
B_k(\phi, \phi)=2^{-k} L_k (\phi_{<k}, \phi_k) +\sum _{j\geq k} 2^{-j}L_k (\phi_j, \phi_j)=2^{-k} L_k(\phi,\phi).
\end{equation}
The cubic and quartic expressions $Q_k^3$, $\tilde Q_k^3$ and $\tilde Q_k^4$ are translation invariant multilinear forms of  the type
\begin{equation}
\begin{aligned}
  Q_k^3(\phi,\phi,\phi) = & \ L_k(\phi,\phi, \phi)+L_k(H\phi,\phi, \phi),\\
 \tilde Q_k^3(\phi,\phi,\phi) = & \ L_k(\phi,\phi, \phi)+L_k(H\phi,\phi, \phi), 
\\
 \tilde Q_k^4(\phi,\phi,\phi,\phi) = \ &  L_k(\phi,\phi, \phi,\phi)+ L_k(H\phi,\phi, \phi,\phi),
\end{aligned}
\end{equation}
all with output at frequency $2^k$.
\end{lemma}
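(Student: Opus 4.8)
The statement is purely algebraic and structural, so the plan is to carefully track the frequency supports and the commutator structure through each of the transformations, and to systematically rewrite every term produced in the $L_k$ normal form. The starting point is the explicit expression \eqref{commutator B_k} for $B_k$; the three terms there are, respectively, a commutator of $P_k^+ H$ with $\partial_x^{-1}\phi_{<k}$ acting on $\phi$ (whose output is at frequency $2^k$, hence only $\phi$ at frequency $\sim 2^k$ survives), and two terms with a high-frequency antiderivative $\partial_x^{-1}\phi_{\geq k}$. For the first term, I would invoke Lemma~\ref{commutator} (the Leibniz rule for $P_k$): writing the commutator with $\partial_x^{-1}\phi_{<k}$, the commutator shifts a derivative onto the low-frequency factor, converting $\partial_x^{-1}\phi_{<k}$ into $\phi_{<k}$ and producing a $2^{-k}$ gain, giving exactly $2^{-k}L_k(\phi_{<k},\phi_k)$. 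For the remaining two terms, since $\partial_x^{-1}\phi_{\geq k}=\sum_{j\geq k}\partial_x^{-1}\phi_j$ and $\partial_x^{-1}$ acting on a frequency-$2^j$ piece is $2^{-j}L(\phi_j)$, and since the output is forced to frequency $2^k$ which (because the other factor $\phi$ or $H\phi$ is unconstrained) requires that factor to also be at frequency $\sim 2^j$, one gets $\sum_{j\geq k}2^{-j}L_k(\phi_j,\phi_j)$. The $H$ acting on an input is harmless: it either falls on a fixed-sign frequency piece (where $H$ is a bounded Fourier multiplier of modulus one, hence absorbed into $L$) or I keep it explicit as $L_k(H\phi,\dots)$; in the $B_k$ formula it can be absorbed since the relevant factors have frequency localization. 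Finally, combining $2^{-k}L_k(\phi_{<k},\phi_k)+\sum_{j\geq k}2^{-j}L_k(\phi_j,\phi_j)$ and noting $2^{-j}\leq 2^{-k}$ for $j\geq k$, and that all pieces are diagonal or low-high, one can crudely bound this as a single $2^{-k}L_k(\phi,\phi)$ (absorbing the extra dyadic summation into the kernel of $L$, which is still $L^1$ because the $2^{-(j-k)}$ factors sum).

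\textbf{The cubic and quartic forms.} Next I would turn to $Q_k^3$, $\tilde Q_k^3$, $\tilde Q_k^4$. These are not given by closed formulas in the excerpt, so the plan is to generate them by honest substitution: apply the Schr\"odinger operator $i\partial_t+\partial_x^2$ to $\tilde\phi_k^+ = \phi_k^+ + B_k(\phi,\phi)$ using \eqref{eq-lin-k+} for the $\phi_k^+$ part and using the equation \eqref{bo} (i.e. $i\partial_t\phi = -\partial_x^2\phi + i\phi\phi_x$, schematically) to evaluate $(i\partial_t+\partial_x^2)B_k(\phi,\phi)$. By the design of the normal form, the purely quadratic output cancels, and what remains is $A^{k,+}_{BO}\tilde\phi_k^+ = Q_k^3$, where $Q_k^3$ collects: (a) the cubic terms coming from $B_k$ where one $\phi$-factor gets hit by $\phi\phi_x$ (substituting $\partial_t\phi$); (b) the genuinely-quadratic leftover terms in \eqref{eq-lin-k+} — the high-frequency term $P_k^+(\phi_{\geq k}\phi_x)$, the commutator term, and the $\frac12(H+i)\partial_x\phi_{<k}\cdot\phi_k^+$ term — which after substituting $B_k$ or more precisely combining with the action of $(i\partial_t+\partial_x^2)$ on $B_k$, cancel at quadratic order, leaving cubic remainders. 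Each such term has the shape: three copies of $\phi$ (some possibly with an $H$), various Littlewood-Paley projections, one factor of $\partial_x$ from the nonlinearity and one factor of $\partial_x^{-1}$ from $B_k$, which cancel up to a $2^{-(\text{something})}$ that is absorbed into an $L^1$ kernel; the frequency-$2^k$ localization of the output is just the external $P_k^+$. The key mechanism making everything an $L_k$ form with no surviving derivatives is precisely that each $\partial_x$ in a nonlinearity is paired against a $\partial_x^{-1}$ from the normal form correction, or against a commutator (Lemma~\ref{commutator}) that trades it for a low-frequency derivative which is then again paired against a $\partial_x^{-1}$ — one has to check case by case that no net positive derivative is left on any factor. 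For $\tilde Q_k^3$ and $\tilde Q_k^4$ I would then apply the Cole–Hopf conjugation \eqref{conjugation}: computing $(i\partial_t+\partial_x^2)(\tilde\phi_k^+ e^{-i\Phi_{<k}})$ using \eqref{Phi} for $\Phi$, the paradifferential terms in $A^{k,+}_{BO}$ cancel by construction (this is the point of the $\frac12(H+i)\partial_x\phi_{<k}$ correction term), $Q_k^3 e^{-i\Phi_{<k}}$ passes through to give $\tilde Q_k^3 e^{-i\Phi_{<k}}$ up to relabeling, and the new terms produced — from $(\partial_x^2 e^{-i\Phi_{<k}})$ and $(\partial_x e^{-i\Phi_{<k}})(\partial_x\tilde\phi_k^+)$ and $(\partial_t e^{-i\Phi_{<k}})$ — each carry a factor $\partial_x\Phi_{<k} = \frac12\phi_{<k}$ or $\partial_t\Phi_{<k}$ (replaced via \eqref{Phi} by $\Phi_x^2 - H\Phi_{xx} = \frac14\phi^2 - \frac12 H\phi_x$, schematically, localized to low frequency), so these contribute the extra cubic pieces (from one factor of $\partial_x\Phi$) and, iterating once more because the exponential has been expanded, the quartic pieces (from two factors of $\partial_x\Phi$); again every derivative is paired and only $L_k$ structure with at most one explicit $H$ per term survives.

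\textbf{The main obstacle.} The genuinely delicate point is not the existence of such a representation but the bookkeeping: verifying that after all substitutions \emph{every single term} has its derivative exactly balanced so that the resulting kernel is in $L^1$ and the expression is a bona fide $L_k$-form (rather than, say, an $L_k$-form with one surviving derivative on a high-frequency factor, which would be much worse). The risky sub-case is the high-high-to-low interaction, e.g. the term $P_k^+(\phi_{\geq k}\phi_x)$ and its normal-form counterparts $P_k^+(H\phi\cdot\partial_x^{-1}\phi_{\geq k})$: here the output frequency $2^k$ is much smaller than the inputs, so the antiderivative $\partial_x^{-1}$ on $\phi_{\geq k}=\sum_{j\geq k}\phi_j$ gains $2^{-j}$ while the derivative $\partial_x$ on the other high factor costs $2^j$, and one must check the cancellation is exact and the residual dyadic sum $\sum_{j\geq k}2^{-(j-k)}$ converges — which it does, and can be folded into the $L$-kernel. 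A second place requiring care is that $\partial_x^{-1}\phi$ is only defined up to a constant; this is why the excerpt rewrote $B_k$ via the commutator form \eqref{commutator B_k} in the first place, and one must make sure that in deriving $Q_k^3$ one only ever uses $\partial_x^{-1}$ applied to pieces $\phi_{\geq k}$ or inside commutators, never on $\phi$ outright — the algebra has already been arranged so this holds, but it must be honored throughout. Modulo this careful-but-routine case analysis, the lemma follows by direct computation, and I would present the explicit formulas for $Q_k^3$, $\tilde Q_k^3$, $\tilde Q_k^4$ and then read off the claimed $L_k$ structure.
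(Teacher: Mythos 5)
Your proposal follows essentially the same route as the paper: Lemma~\ref{commutator} for the commutator term of $B_k$, the observation $\partial_x^{-1}\phi_{\geq k}=2^{-j}L(\phi_j)$ with balanced high frequencies for the remaining terms, and a direct computation of $Q_k^3$, $\tilde Q_k^3$, $\tilde Q_k^4$ (the paper writes these out explicitly, e.g.\ \eqref{q3}) in which every $\partial_x$ is paired against a $\partial_x^{-1}$ or a commutator gain, with the conjugation contributing the extra cubic terms $\phi_k^+\left(P_{<k}(\phi^2),(P_{<k}\phi)^2\right)$ and the quartic term $B_k\cdot\left(2P_{<k}(\phi^2)-(P_{<k}\phi)^2\right)$. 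One small imprecision: in the last two terms of $B_k$ the unconstrained factor need not sit at frequency $\sim 2^j$ when $j\approx k$ (it may be low frequency), but that contribution is exactly the $2^{-k}L_k(\phi_{<k},\phi_k)$ piece, which the paper captures by splitting $\phi=\phi_{<k}+\phi_{\geq k}$, so the stated formula is unaffected.
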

 \begin{proof}
   We recall that $B_k$ is given in \eqref{commutator B_k}. For the
   first term we use Lemma~\ref{commutator}. For the two remaining
   terms we split the unlocalized $\phi$ factor into $\phi_{<k}+
   \phi_{\geq k}$. The contribution of $\phi_{<k}$ is as before, while
   in the remaining bilinear term in $\phi_{\geq k}$ the frequencies
   of the two inputs must be balanced at some frequency $2^j$ where
   $j$ ranges in the region $j \geq k$. For the last expression of
   $B_k$ we simply observe that
\begin{equation}
\label{dipi}
\partial^{-1}_x\phi_{\geq k}=2^{-k}L(\phi).
\end{equation} 

Next we consider $Q^3_k$ which is obtained by a direct computation
\begin{equation}
\label{q3}
\begin{aligned}
Q^3_k(\phi, \phi, \phi)=&-\frac{1}{2}i \, \left[ P^+_kH\, , \, P_{<k}(\phi ^2)\right]\, \phi -\frac{1}{2}i \, \left[ P^+_kH\, , \, \partial_x^{-1}\phi_{<k} \right]\partial_x (\phi ^2)
-\frac{1}{4}iP^+_k\left( H\partial_x (\phi ^2)\cdot \partial_x^{-1}\phi_{\geq k}\right)\\
&-\frac{1}{4}iP^+_k\left( H\phi \cdot P_{\geq k}(\phi ^2)\right)-\frac{1}{4} iP^+_k H\left( \partial_x (\phi^2)\cdot  \partial_x^{-1}\phi_{\geq k}\right)
-\frac{1}{4}i P^+_k H\left(\phi \cdot P_{\geq k}(\phi ^2)\right)\\ 
&-iP_{<k}\phi \cdot \left\lbrace  -\frac{1}{2} \, \left[ P^+_kH\, , \, \phi_{<k}\right]\, \phi -\frac{1}{2} \, \left[ P^+_kH\, , \, \partial_x^{-1}\phi_{<k} \right]\, \phi_x
-\frac{1}{4}P^+_k\left( H\phi_x \cdot  \partial_x^{-1}\phi_{\geq k}\right)\right.\\ 
& \hspace*{2.15cm} \left. -\frac{1}{4} P^+_k  \left( H\phi \cdot  \phi_{\geq k}\right) -\frac{1}{4}P^+_KH \left( \phi_x \cdot  \partial_x^{-1}\phi_{\geq k}\right)
-\frac{1}{4}P^+_kH \left( \phi \cdot  \phi_{\geq k}\right) \right\rbrace \\
&-\frac{1}{2}\partial_x (H+i)\phi_{<k} \cdot B_{k}(\phi , \phi).
\end{aligned}
\end{equation}
We consider each term separately. For the commutator terms we use Lemma~\ref{commutator} to eliminate all the inverse derivatives. This yields a factor of $2^{-k}$ which in 
turn is used to cancel the remaining derivative in the expressions. For instance consider the second term
\[
\begin{aligned}
\left[  P^+_k H\, , \, \partial^{-1}_x  \phi_{<k} \right]\, \partial_{x}(\phi^2) &=\left[  P^+_k H\, , \, \partial^{-1}_x  \phi_{<k} \right]\,\tP_k \partial_{x}(\phi^2)\\
&=L (\phi_{<k} , 2^{-k}\tP_k\partial_{x}(\phi^2))\\
&=L (\phi_{<k} , \phi^2)\\
&=L (\phi_{<k} , \phi, \phi).
\end{aligned}
\]
The remaining terms are all similar. We consider for example the third term
\[
P^+_k \left( H\partial_x (\phi^2)\cdot \partial^{-1}_x\phi_{\geq k}\right)=  P^+_k \partial_x\left( H (\phi^2)\cdot \partial^{-1}_x\phi_{\geq k}\right)
-P^+_k \left( H (\phi^2)\cdot \phi_{\geq k}\right).
\]
The derivative in the first term yields a $2^k$ factor, and we can use \eqref{dipi}, and the second term is straightforward. 

For $\tilde{Q}^3_k$ an easy computation yields
\[
\tilde{Q}^3_k(\phi, \phi, \phi)=Q^3_k(\phi, \phi, \phi)+\frac12 \phi^+_k \cdot P_{<k}(\phi^2) -\frac14 \phi^+_k \cdot \left( P_{<k}\phi\right) ^2,
\]
and both extra terms are straightforward. 

Finally, $\tilde{Q}^4_k (\phi, \phi, \phi, \phi)$ is given by
\[
\tilde{Q}^4_k (\phi, \phi, \phi, \phi)=\frac14 B_{k}(\phi, \phi) \cdot \left\lbrace  2P_{<k}(\phi ^2) -\left( P_{<k}\phi\right)^2   \right\rbrace ,
\]
and the result follows from the one for the $B_k (\phi, \phi)$.
 
 \end{proof}

  \subsection{The bootstrap argument}
  
  We now finalize the proof of Theorem~\ref{apriori} using a standard
  continuity argument based on the $H^3_x$ global well-posedness
  theory.  Given $0 < t_0 \leq 1$ we denote by
\[
M(t_0) := \sup_{j} c_j^{-2} \,  \|  P_k \phi\|_{S^0[0,t_0]}^2 + \sup_{j\neq k\in \mathbf{N}} \sup_{y\in \R} c^{-1}_j \cdot c_k^{-1} \cdot \| \phi_j \cdot T_y \phi_k \|_{L^2\left[ 0, t_0\right] }. 
\]
Here, in the second term, the role of the condition $j\neq k$ is to
insure that $\phi_j$ and $\phi_k$ have $O(2^{\max \left\lbrace
    j,k\right\rbrace })$ separated frequency localizations.  However,
by a slight abuse of notation, we also allow bilinear expressions of
the form $ P_{k}^1\phi \cdot P_k^2\phi$, where $P_{k}^1$ and $P_{k}^2$
are both projectors at frequency $2^k$ but with at least $2^{k-4}$
separation between the \textbf{absolute values} of the frequencies in
their support.

We  also remark here on the role played by the translation operator $T_y$. This is 
needed in order for us to be able to use thee bilinear bounds in estimating multilinear 
$L$ type expressions.

 We seek to show that 
 \[
 M(1) \lesssim \epsilon^2.
 \]
  As $\phi$ is an $H^3$ solution, it is easy to see that $M(t)$
 is continuous as a function of $t$, and 
 \[
 \lim_{t\searrow 0}M(t) \lesssim \epsilon^2 .
 \]
This is because the only nonzero component of the $S$ norm in the limit $t \to 0$ 
is the energy norm, which converges to the energy norm of the data.

  Thus, by a continuity argument 
it suffices to make the bootstrap assumption 
\[
M(t_0) \leq C^2 \epsilon^2 
\]
and then show that  
\[
M(t_0) \lesssim \epsilon^2 + C^6 \epsilon^6.
\]
This suffices provided that $C$ is large enough (independent of
$\epsilon$) and $\epsilon$ is sufficiently small (depending on $C$).
From here on $t_0 \in (0,1]$ is fixed and not needed in the argument,
so we drop it from the notations.
 
Given our bootstrap assumption, we have the starting estimates
\begin{equation}
\label{boot1}
  \Vert \phi_k\Vert_{S^0}\lesssim C \epsilon c_k ,
   \end{equation}
and
\begin{equation}
  \label{boot2}
  \Vert \phi_j \cdot T_y \phi_k\Vert_{L^2}\lesssim 2^{-\frac{\max \left\lbrace j,k\right\rbrace}{2} }C^2 \epsilon^2 c_j c_k,  \qquad j\neq k, \qquad y \in \R.
\end{equation}
where in the bilinear case, as discussed above, we also allow $j=k$ provided the two localization multipliers
are at least $2^{k-4}$ separated. This separation threshold is fixed once and for all. On the other hand, when 
we prove that the bilinear estimates hold, no such sharp threshold is needed.

Our strategy will be to establish these bounds for the normal form variables $\psi_k$, and then to transfer
them to the original solution $\phi$ by inverting the normal form transformations and estimating errors.

We obtain bounds for the normal form variables $\psi_k^+$. For this we estimate the initial data for $\psi_k$ 
in $L^2$, and  then the  right hand side in the  Schrodinger equation \eqref{conjugare} for $\psi_k^+$ in 
$L^1 L^2$. For the initial data we have
\begin{lemma}
\label{l:invertibila}
Assume \eqref{small}. Then we have 
\begin{equation}
\| \psi_k^+(0)\|_{L^2} \lesssim c_k\epsilon.
\end{equation}
\end{lemma}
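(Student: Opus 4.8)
The plan is to undo the two normal form transformations that define $\psi_k^+$, one layer at a time, and to reduce the estimate to a direct bound on the bilinear correction $B_k$ evaluated at $t=0$, which we read off from its structural description in Lemma~\ref{l:q3}.

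First I would use that the outer factor in \eqref{conjugation} is a unimodular multiplier. Indeed $\Phi$ is real valued, and $P_{<k}$ has a real even symbol, so $\Phi_{<k}(0)$ is real valued and $e^{-i\Phi_{<k}(0)}$ has modulus one; multiplication by it is an isometry of $L^2_x$. Combining \eqref{conjugation} with \eqref{partial nft} this gives
\[
\| \psi_k^+(0)\|_{L^2} = \| \tilde{\phi}_k^+(0)\|_{L^2} \leq \| \phi_k^+(0)\|_{L^2} + \| B_k(\phi(0),\phi(0))\|_{L^2}.
\]
The first term is immediate from the frequency envelope hypothesis, since $\|\phi_k^+(0)\|_{L^2}\leq \|\phi_k(0)\|_{L^2}\leq \epsilon c_k$. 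So the whole statement reduces to showing $\| B_k(\phi(0),\phi(0))\|_{L^2}\lesssim \epsilon c_k$; in fact I expect to prove the stronger $\| B_k(\phi(0),\phi(0))\|_{L^2}\lesssim \epsilon^2 2^{-k/2} c_k^2$, which suffices because $\epsilon, c_k, 2^{-k/2}\leq 1$.

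To bound $B_k$ I would invoke the representation of Lemma~\ref{l:q3},
\[
B_k(\phi,\phi) = 2^{-k} L_k(\phi_{<k},\phi_k) + \sum_{j\geq k} 2^{-j} L_k(\phi_j,\phi_j),
\]
at $t=0$, and estimate each dyadic piece in $L^2$ by Hölder for $L$-forms, placing the low (or balanced) frequency factor in $L^\infty$ via Bernstein, $\|\phi_j\|_{L^\infty}\lesssim 2^{j/2}\|\phi_j\|_{L^2}\leq 2^{j/2}\epsilon c_j$, and the remaining factor in $L^2$. For the paradifferential term this produces $2^{-k}\|\phi_{<k}\|_{L^\infty}\|\phi_k\|_{L^2}$, where $\|\phi_{<k}\|_{L^\infty}\lesssim \epsilon\sum_{j<k}2^{j/2}c_j\lesssim \epsilon\,2^{k/2}c_k$ once the geometric sum is summed using the slowly varying property $c_j\leq 2^{\delta|j-k|}c_k$ with $\delta<\tfrac12$; altogether $\lesssim \epsilon^2 2^{-k/2}c_k^2$. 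For the balanced term, each summand is $\lesssim 2^{-j}\cdot 2^{j/2}\epsilon^2 c_j^2 = 2^{-j/2}\epsilon^2 c_j^2$, and summing over $j\geq k$ (again absorbing the tail with $c_j\lesssim 2^{\delta(j-k)}c_k$) gives the same $\lesssim \epsilon^2 2^{-k/2}c_k^2$. The case $k=0$ is handled identically, using that Lemma~\ref{l:q3} also writes $B_0(\phi,\phi)=L_0(\phi,\phi)$ with entries of the same form.

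I do not anticipate a genuine obstacle: the estimate is soft. The only points that require care are (i) verifying that the outer exponential really is a unit-modulus multiplier, which is exactly where the real-valuedness of $\Phi$ (and hence of $\Phi_{<k}$) is used, and (ii) arranging the dyadic summations so that the slowly varying envelope absorbs them, which is why the smallness of $\delta$ is needed. Note that only the $L^2$-level data encoded in $\epsilon c_k$ enters, so the constants are independent of $\|\phi(0)\|_{H^3}$, as required for Theorem~\ref{apriori}.
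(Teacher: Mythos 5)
Your argument is correct and follows the same route as the paper: strip the unimodular exponential, reduce to an $L^2$ bound for $B_k(\phi(0),\phi(0))$ via \eqref{partial nft}, and estimate both pieces of the decomposition from Lemma~\ref{l:q3} by Bernstein's inequality together with the $L^2$ smallness and the slowly varying frequency envelope. The only (harmless) difference is that you keep the envelope factor in the low-frequency/balanced terms to get the slightly sharper $\epsilon^2 2^{-k/2} c_k^2$, while the paper simply bounds those factors by $\epsilon$.
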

\begin{proof} We begin by recalling the definition of $\psi (t, x)$:
\[
\psi (t,x)=\tilde{\phi}^+_k e^{-i\Phi _{< k}}.
\] 
The $L^2_x$ norms of $\psi_k$ and $\tilde{\phi}^+_k$ are equivalent
since the conjugation with the exponential is harmless. Thus, we need
to prove that $L^2$ norm of $\tilde{\phi}^+_k$ is comparable with the
$L^2$ norm of $\phi^+_k$. The two variables are related via the
relation \eqref{partial nft}. Thus, we reduce our problem to the study
of the $L^2$ bound for the bilinear form $B_{k}(\phi , \phi)$.  From
Lemma \eqref{l:q3} we know that
\[
B_k(\phi, \phi)=2^{-k} L_k (\phi_{<k}, \phi_k) +\sum _{j\geq k} 2^{-j}L_k (\phi_j, \phi_j),
\]
so we estimate each term separately. For the first term we use the the smallness of the initial data in the $L^2$ norm, together with Bernstein's inequality, which we use for  the low frequency term 
\begin{equation*}
\Vert 2^{-k} L_k (\phi_{<k}, \phi_k)\Vert_{L^2}\lesssim  2^{-\frac{k}{2}} \cdot \epsilon  \cdot \Vert \phi(0)\Vert_{L^2}=2^{-\frac{k}{2}} \cdot \epsilon ^2 \cdot c_k.
\end{equation*}
For the second component of $B_k(\phi, \phi)$, we again use Bernstein's inequality
\[
\Vert \sum _{j\geq k} 2^{-j}L_k (\phi_j, \phi_j) \Vert_{L^2_x}\lesssim \sum_{j\geq k} 2^{-\frac{j}{2}}\cdot \epsilon  \cdot  \Vert \phi_j(0) \Vert_{L^2}\lesssim \sum_{j\geq k} 2^{-\frac{j}{2}}\cdot \epsilon ^2 \cdot c_j\lesssim  2^{-\frac{k}{2}} \cdot  c_k  \cdot \epsilon^2  .
\]

This concludes the proof.

\end{proof}

Next we consider the right hand side in the $\psi_k$ equation:
\begin{lemma}
Assume \eqref{boot1} and \eqref{boot2}. Then we have 
\begin{equation}
\label{perturbative}
\| \tilde Q_k^3 \|_{L^1 L^2} +\| \tilde Q_k^4 \|_{L^1 L^2} \lesssim C^3 \epsilon^3 c_k.
\end{equation}
\end{lemma}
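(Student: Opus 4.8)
The plan is to use the explicit structure of $\tilde Q_k^3$ and $\tilde Q_k^4$ provided by Lemma~\ref{l:q3}, namely that each is an $L$-type multilinear form with output at frequency $2^k$ and with at most one Hilbert transform on an entry, and then to reduce the $L^1L^2$ bound to a combination of the Strichartz bounds \eqref{boot1} and the bilinear bounds \eqref{boot2}. Since the exponential factor $e^{-i\Phi_{<k}}$ has modulus one and, by \eqref{antiderivative}, $\partial_x \Phi_{<k} = \tfrac12 \phi_{<k}$ is controlled in Strichartz norms, multiplication by it is harmless in $L^1L^2$ (one can absorb it into the $L$-form, or simply note $\|fe^{-i\Phi_{<k}}\|_{L^1L^2} = \|f\|_{L^1L^2}$ pointwise in $t$); so it suffices to bound $\|\tilde Q_k^3\|_{L^1L^2}$ and $\|\tilde Q_k^4\|_{L^1L^2}$ directly.

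\textbf{Cubic term.} Write $\tilde Q_k^3 = L_k(\phi,\phi,\phi) + L_k(H\phi,\phi,\phi)$, with output localized at frequency $2^k$; since $H$ is bounded on $L^2$ and on $L^\infty$ at each fixed frequency, I treat the two pieces identically and drop the $H$. By frequency localization of the output, at least one of the three inputs has frequency $\gtrsim 2^k$. The strategy is to peel off that high-frequency input in $L^4_tL^\infty_x$ or $L^\infty_tL^2_x$ via the $L$-form product rule (allowing uncorrelated translations $T_y$, as in the multilinear-estimates discussion in Section~2), and to pair the remaining two inputs using a bilinear $L^2$ estimate whenever their frequencies are separated, or using two Strichartz factors otherwise. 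Concretely, for a generic trichotomy one estimates, schematically,
\[
\|L_k(\phi_a,\phi_b,\phi_c)\|_{L^1_tL^2_x} \lesssim \|\phi_a\|_{L^\infty_tL^2_x}\,\|\,\phi_b\, T_y\phi_c\,\|_{L^2_{t,x}}\cdot(\text{time factor from }L^4\times L^4\to L^2),
\]
so that one Strichartz factor, one bilinear factor and one more Strichartz factor combine to give $C\cdot C^2\cdot\epsilon^3$ times products of envelope coefficients; the dyadic summation in the two non-output frequencies converges because of the $2^{-\max/2}$ gain in \eqref{boot2} together with the slowly-varying property $c_j/c_k \le 2^{\delta|j-k|}$, leaving a single factor $c_k$. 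In the genuinely non-separated regime (all three inputs at frequency $\sim 2^k$, or two inputs at comparable frequency with no separation), one instead uses three $L^4_tL^\infty_x$ Strichartz norms directly, $L^4\times L^4\times L^4 \hookrightarrow L^{4/3}_tL^\infty_x \subset L^1_tL^2_x$ after using the frequency localization and Bernstein in $x$; the constant is again $C^3\epsilon^3 c_k$.

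\textbf{Quartic term.} From Lemma~\ref{l:q3}, $\tilde Q_k^4(\phi,\phi,\phi,\phi) = \tfrac14 B_k(\phi,\phi)\cdot\{2P_{<k}(\phi^2) - (P_{<k}\phi)^2\}$, and $B_k(\phi,\phi) = 2^{-k}L_k(\phi,\phi)$. The factor $2^{-k}$ together with the output localization is exactly what makes this bounded: I estimate $\|B_k(\phi,\phi)\|_{L^\infty_tL^2_x}\lesssim 2^{-k/2}C^2\epsilon^2 c_k$ (as in the proof of Lemma~\ref{l:invertibila}, using Bernstein on the low-frequency factor), and $\|P_{<k}(\phi^2)\|_{L^2_tL^\infty_x}$ and $\|(P_{<k}\phi)^2\|_{L^2_tL^\infty_x}$ by two Strichartz $L^4_tL^\infty_x$ factors, giving $O(C^2\epsilon^2)$ with at most a harmless $2^{k/2}$ from Bernstein converting $L^\infty_tL^2_x$ on the $2^{-k}$-scaled piece; the powers of $2^k$ cancel, the time integration uses $L^\infty_t\cdot L^2_t \subset L^2_t \subset L^1_t$ on the unit interval, and the envelope bookkeeping again collapses to a single $c_k$. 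Altogether both pieces are $\lesssim C^3\epsilon^3 c_k$ (the quartic piece is in fact $O(C^4\epsilon^4)$, which is $\le C^3\epsilon^3$ for $\epsilon$ small, or one simply states the bound with the $C^3\epsilon^3$ normalization as in \eqref{perturbative}).

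\textbf{Main obstacle.} The delicate point is the dyadic summation over the internal frequencies in the cubic term: the derivative in the original nonlinearity is borderline, so there is no absolute frequency gain to spare, and one must be careful that in \emph{every} frequency configuration either a bilinear $L^2$ bound with its $2^{-\max/2}$ gain is available (which requires genuine frequency separation between two of the inputs) or else one is in a truly balanced low$\times$low$\times$low-type regime where three Strichartz norms suffice. Verifying that these two mechanisms exhaust all cases, and that the slowly-varying envelope constant $\delta$ is small enough to beat the logarithmic-looking sums at the transition between regimes, is where the real work lies; this is precisely the place where the bilinear estimates are "used in a very mild way" to kill the would-be logarithmic divergence, as advertised in the introduction.
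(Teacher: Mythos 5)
Your overall route is the same as the paper's: a trichotomy in the internal frequencies for the cubic term, with a bilinear $L^2$ bound from \eqref{boot2} paired against a Strichartz factor when two inputs are frequency-separated and pure Strichartz bounds in the balanced case, plus the quartic term handled by the $L^2$ bound on $B_k$ times Strichartz bounds on the low-frequency factors. However, two of your concrete norm pairings fail as written and need to be corrected. First, the schematic estimate $\|L_k(\phi_a,\phi_b,\phi_c)\|_{L^1_tL^2_x}\lesssim\|\phi_a\|_{L^\infty_tL^2_x}\,\|\phi_b\,T_y\phi_c\|_{L^2_{t,x}}\cdot(\cdots)$ cannot hold: spatially an $L^2_x$ factor times an $L^2_x$-measured product lands in $L^1_x$, not $L^2_x$. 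The correct pairing (the one the paper uses) puts the \emph{lone} factor in $L^4_tL^\infty_x$ and the separated pair in the bilinear $L^2_{t,x}$ norm, giving an $L^{4/3}_tL^2_x$ bound, which embeds in $L^1_tL^2_x$ on the unit time interval and carries the $2^{-\max/2}$ gain needed for the envelope summation. Second, in the balanced case $2^{k_1}\approx 2^{k_2}\approx 2^{k_3}\approx 2^k$ you propose three $L^4_tL^\infty_x$ norms followed by ``Bernstein in $x$'' to reach $L^1_tL^2_x$; Bernstein goes the wrong way here (frequency localization gives $L^2_x\to L^\infty_x$, never $L^\infty_x\to L^2_x$), so $L^{4/3}_tL^\infty_x\not\subset L^1_tL^2_x$. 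The paper instead estimates this case by $\|\phi_{k_1}\|_{L^\infty_tL^2_x}\|\phi_{k_2}\|_{L^4_tL^\infty_x}\|\phi_{k_3}\|_{L^4_tL^\infty_x}$, landing in $L^2_{t,x}\subset L^1_tL^2_x$; you do list $L^\infty_tL^2_x$ as an available norm, so the repair is immediate, but as stated the step is wrong.

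Two smaller points: in the case where all three internal frequencies are comparable and much larger than $2^k$, the reason a bilinear bound is available is that the frequencies must sum to $O(2^k)$, forcing an $O(2^{k_3})$ separation between the absolute values of two of them (this is exactly why the bootstrap norm allows $j=k$ with separated multipliers); your write-up should invoke this rather than only ``whenever their frequencies are separated.'' For the quartic term your argument matches the paper (an $L^\infty_tL^2_x$ bound on $B_k$ of size $2^{-k/2}C^2\epsilon^2c_k$ against $L^4_tL^\infty_x$ Strichartz factors for $P_{<k}(\phi^2)$ and $(P_{<k}\phi)^2$), and no Bernstein loss of $2^{k/2}$ is actually needed there, so the bound comes out directly as $O(C^4\epsilon^4 c_k 2^{-k/2})$, consistent with \eqref{perturbative}.
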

A similar estimate holds for the quadratic term $Q^2_0$ which appears in the case $k=0$, 
but that is  quite straightforward.
\begin{proof}
We start  by estimating the first term in \eqref{perturbative}. For completeness we recall the expression of
 $\tilde{Q}^3_k$  from Lemma~\ref{l:q3}:
\[
\tilde{Q}^3_k(\phi, \phi, \phi)=L_{k}(\phi, \phi, \phi)+L_{k}(H\phi, \phi, \phi).
\]
Here $H$ plays no role so it suffices to discuss the first term. To
estimate the trilinear expression $L_k (\phi, \phi, \phi)$ we do a
frequency analysis. We begin by assuming that the first entry of $L_k$
is localized at frequency $2^k_1$, the second at frequency $2^{k_2}$,
and finally the third one is at frequency $2^{k_3}$. As the output is
at frequency $2^k$, there are three possible cases:
\begin{itemize}
\item If $2^k<2^{k_1}<2^{k_2}=2^{k_3}$, then we can use the bilinear Strichartz estimate for the imbalanced frequencies, and the Strichartz inequality for the remaining term  to arrive at
\[
\begin{aligned}
\Vert L_k(\phi_{k_1}, \phi_{k_2}, \phi_{k_3})\Vert_{L^{\frac43}_t L^2_x}&\lesssim \Vert L(\phi_{k_1}, \phi_{k_3})\Vert_{L^2_{t,x}}\cdot \Vert \phi_{k_2}\Vert_{L^{4}_tL^\infty_x}\\
&\lesssim 2^{-\frac{k_3}{2}}\cdot C^2\epsilon^2\cdot c_{k_1}\cdot c_{k_3}\cdot \Vert \phi_{k_2}\Vert_{L^{4}_t L^\infty_x}\\
&\lesssim 2^{-\frac{k_3}{2}}\cdot C^3\cdot \epsilon^3 \cdot  c_{k_1}c_{k_2}c_{k_3}\lesssim 2^{-\frac{k}{2}}\cdot C^3\cdot \epsilon^3 \cdot  c_{k}^3.
\end{aligned}
\]

\item If $ 2^{k_1}=2^{k_2}=2^{k_3} \approx 2^k$, then we use directly the Strichartz estimates 
\[
\Vert L_k(\phi_{k_1}, \phi_{k_2}, \phi_{k_3})\Vert_{L^2_t L^2_x}\lesssim \| \phi_{k_1}\|_{L^\infty_t L^2_x} \cdot \Vert \phi_{k_2}\Vert_{L^{4}_t L^\infty_x} \cdot \Vert \phi_{k_3}\Vert_{L^{4}_t L^\infty_x} \lesssim C^3 \epsilon^3 c_k^3.
\]
\item If $ 2^{k_1}=2^{k_2}=2^{k_3} \gg 2^k$ then the frequencies of
  the three entries must add to $O(2^k)$. Then the absolute values of
  at least two of the three frequencies must have at least a
  $2^{k_3-4}$ separation.  Thus, the bilinear Strichartz estimate
  applies, and the same estimate as in the first case follows in the
  same manner.
\end{itemize}
This concludes the bound for $\tilde Q^{3}_k$. 

Finally, the $L^1_tL^2_x$ bound for 
\[
\tilde{Q}^4_k (\phi, \phi, \phi, \phi)=\frac14 B_{k}(\phi, \phi) \cdot \left\lbrace  2P_{<k}(\phi ^2) -\left( P_{<k}\phi\right)^2   \right\rbrace ,
\]
follows from the $L^2$ bound for $B_k(\phi, \phi)$ obtained in
Lemma~\ref{l:invertibila} together with the $L^{4}_t L^\infty_x $
bounds for the remaining factors. To bound these terms we do similar
estimates as the ones in Lemma~\ref{l:invertibila}.
\end{proof}

Given the bounds in the two above lemmas we have the Strichartz estimates for $\psi_k$:
\[
\Vert \psi_k\Vert_{S^0}\lesssim \Vert \psi_k(0)\Vert _{L^2_x}+\Vert \tilde{Q}^3_k(\phi, \phi, \phi) +\tilde{Q}^4_{k}(\phi, \phi, \phi, \phi)  \Vert_{L^1_tL^2_x}
\lesssim c_k\left( \epsilon + \epsilon ^3C^3\right) .
\]
This implies the same estimate for $\tilde{\phi}^+_k$. Further we
claim that the same holds for $\phi^+_k$. For this we need to estimate
$B_{k}(\phi, \phi)$ in $S^0$. We recall that
\[
B_k(\phi, \phi) = 2^{-k} L_k(P_{<k}\phi,P_k\phi)+\sum_{j\geq k} 2^{-j} L_{k} (\phi_j, \phi_j).
\]
We now estimate
\[
\begin{aligned}
\Vert B_{k}(\phi, \phi)\Vert_{S^0}& \lesssim 2^{-k}\Vert \phi_k\Vert_{S^0}\Vert \phi_{<k}\Vert_{L^{\infty}}+\sum_{j\geq k} 2^{-j} \Vert \phi_j\Vert_{S^0}\Vert \phi_j\Vert_{L^{\infty}}\\
 &\lesssim C\epsilon^2 c_k 2^{-\frac{k}{2}}+\sum_{j\geq k}  C\epsilon^2 c_j 2^{-\frac{j}{2}}\\
 & \lesssim C\epsilon^2 c_k 2^{-\frac{k}{2}} .
 \end{aligned}
\]
Here we have used Bernstein's inequality to estimate the $L^{\infty}$ norm
in term of the mass, and the slowly varying property of the $c_k$'s
for the last series summation.  This concludes the Strichartz
component of the bootstrap argument.

For later use, we observe that the same argument as above but with without using Bernstein's 
inequality yields the bound
\begin{equation}\label{psi-err}
\|\psi_k - e^{-i \Phi_{<k}} \phi_k^+ \|_{L^2 L^\infty \cap L^4 L^2} \lesssim 2^{-k} \epsilon^2C^2 c_k
\end{equation}
as a consequence of a similar bound for $B_k$.

We now consider the bilinear estimates in our bootstrap argument. We
drop the translations from the notations, as they play no role in the
argument. Also to fix the notations, in what follows we assume that $j < k$.
The case when $j=k$ but we have frequency separation is completely similar.

We would like to start from the bilinear bounds for $\psi_k$, which
solve suitable inhomogeneous linear Schr\"odinger equations. However,
the difficulty we face is that, unlike $\tphi_k^+$, $\psi_k$ are no
longer properly localized in frequency, therefore for $j \neq k$,
$\psi_j$ and $\psi_k$ are no longer frequency separated.  To remedy
this we introduce additional truncation operators $\tilde P_j$ and
$\tilde P_k$ which still have $2^{\max\{j,k\}}$ separated supports but
whose symbols are identically $1$ in the support of $P_j$,
respectively $P_k$.  Then the bilinear $L^2$ bound in Lemma~\ref{l:bi}
yields
\[
\| \tilde P_j \psi_j \cdot \tilde P_k \psi_k\|_{L^2} \lesssim \epsilon^2 c_j c_k 2^{-\frac{\max\{j,k\}}2} 
(\epsilon^2 + C^6 \epsilon^6) .
\]
It remains to transfer this bound to $\phi_j^+ \phi_k^+$. We expand  
\[
\tilde P_j \psi_j  \tilde P_k \psi_k -  \phi_j^+ e^{-i \Phi_{<j}} \phi_k^+  e^{-i \Phi_{<k}} =
\tilde P_j \psi_j  (\tilde P_k \psi_k -  \phi_k^+ e^{-i \Phi_{<k}})+
(\tilde P_j \psi_j  -   \phi_j^+ e^{-i \Phi_{<j}})   \phi_k^+  e^{-i \Phi_{<k}} .
\]
For the first term we use the bound \eqref{psi-err} for the second factor 
combined with the Strichartz bound for the second,
\[
\|\tilde P_j \psi_j  (\tilde P_k \psi_k -  \phi_k^+ e^{-i \Phi_{<k}})\|_{L^2} \lesssim 
\| \psi_j \|_{L^\infty L^2} \|  \psi_k -  \phi_k^+ e^{-i \Phi_{<k}}\|_{L^2 L^\infty} \lesssim \epsilon^3 C^2 c_j c_k 
2^{-k},
\]
which is better than we need.  It remains to consider the second term,
where we freely drop the exponential. There the above argument no longer suffices, 
as it will only yield a $2^{-k}$ low frequency gain.

We use the commutator Lemma~\ref{commutator} to express the difference in the second term
as 
\[
\begin{split}
\tilde P_j \psi_j  -   \phi_j^+ e^{-i \Phi_{<j}}  = & \ 
(\tilde P_j-1) (\tphi_j^+ e^{  -i \Phi_{<j}}) + B_j (\phi,\phi) e^{-i \Phi_{<j}} 
\\
= & \ 
[\tilde P_j-1, e^{  -i \Phi_{<j}}] \phi_j^+ e^{  -i \Phi_{<j}}) +
(\tilde P_j-1) ( B_j (\phi,\phi)  e^{  -i \Phi_{<j}} +
+  B_j (\phi,\phi) e^{-i \Phi_{<j}} 
\\ 
= & \ 2^{-j} L(\partial_x   e^{  -i \Phi_{<j}}, \phi_j^+)  + L( B_j (\phi,\phi), e^{-i \Phi_{<j}}) 
\\
= & \  2^{-j} L( \phi_{<j}, \phi_j,  e^{  -i \Phi_{<j}}) + \sum_{l > j} 2^{-l} L(\phi_l,\phi_l, e^{-i \Phi_{<j}}) .
\end{split}
\]

Now we multiply this by $\phi_k^+$, and estimate in $L^2$ using our 
bootstrap hypothesis. For $l \neq k$ we can use a bilinear $L^2$ estimate 
combined with an $L^\infty$ bound obtained via Bernstein's inequality.
For $l = k$ we use three Strichartz bounds. The exponential is harmlessly discarded in all cases.
We obtain
\[
\| (\tilde P_j \psi_j  -   \phi_j^+ e^{-i \Phi_{<j}} ) \phi_k^{+}\|_{L^2}
\lesssim \epsilon^3 C^2 ( c_j c_k 2^{-\frac{j}2} 2^{-\frac{k}2} + \sum_{l > j}
 c_l c_k 2^{-\frac{l}2} 2^{-\frac{k}2}) = \epsilon^3 C^2  c_j c_k 2^{-\frac{j}2} 2^{-\frac{k}2}
\]
which suffices.

\section{Bounds for the linearized equation}

In this section we consider the  linearized Benjamin-Ono equation equation,
\begin{equation}\label{lin}
(\partial_t +H \partial^2_x) v  =   \partial_x  ( \phi v ).
\end{equation}
Understanding the properties of the linearized flow is critical for
any local well-posedness result. 

Unfortunately, studying the linearized problem in $L^2$ presents
considerable difficulty. One way to think about this is that $L^2$
well-posedness for the linearized equation would yield Lipschitz
dependence in $L^2$ for the solution to data map, which is known to be
false. 

Another way is to observe that by duality, $L^2$ well-posedness
implies $\dot H^{-1}$ well-posedness, and then, by interpolation,
$\dot H^s$ well-posedness  for $s \in [0,1]$. This last 
consideration shows that the weakest (and most robust)
local well-posedness result we could prove for the linearized 
equation is in $\dot H^{-\frac12}$.  

Since we are concerned with local well-posedness here, we will harmlessly replace
the homogeneous space $\dot H^{-\frac12}$ with $ H^{-\frac12}$.  Then we will prove 
the following:

\begin{theorem}
\label{t:liniarizare}
Let $\phi$ be an $H^3$  solution to the Benjamin-Ono equation 
in $[0,1]$ with small mass, as in \eqref{small}.
Then the linearized equation \eqref{lin} is well-posed in $H^{-\frac12}$
with a uniform bound 
\begin{equation}\label{S-lin}
\| v\|_{C(0,1;H^{-\frac12})} \lesssim \|v_0\|_{H^{-\frac12}}
\end{equation}
with a universal implicit constant (i.e., not depending on the $H^3$ norm of $\phi$).
\end{theorem}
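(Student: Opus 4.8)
The plan is to reduce the theorem to an a priori estimate for the linearized flow applied to $H^3$ solutions $v$ (with $\phi$ the given $H^3$ solution of \eqref{bo}), and then to deduce existence, uniqueness and continuous dependence in $H^{-\frac12}$ from the linearity of \eqref{lin} together with a density argument built on the regular theory. The a priori estimate will be proved by transplanting the two–step normal form machinery of Section~\ref{s:local} from \eqref{bo} to its linearization. The key observation is that both the paradifferential operator $A^{k,+}_{BO}$ of \eqref{op Abo} and the gauge $e^{-i\Phi_{<k}}$ of \eqref{conjugation} are manufactured entirely from the background solution $\phi$, so they are available here unchanged; only the bilinear correction $B_k$ needs to be polarized, with one copy of $\phi$ replaced by $v$.

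Concretely, I would localize \eqref{lin} at frequency $2^k$, isolate the paradifferential part $i\phi_{<k}\partial_x v_k^+$ and add the same symmetrizing zeroth order term, then set $\tv_k^+ := v_k^+ + B_k(\phi,v)$ with $B_k(\phi,v)$ the polarization of \eqref{commutator B_k}, and finally $\psi_k^+ := \tv_k^+ e^{-i\Phi_{<k}}$. Exactly as in \eqref{eq after 1nft}–\eqref{conjugare} this turns \eqref{lin} into a family of inhomogeneous Schrödinger equations
\[
(i\partial_t+\partial_x^2)\psi_k^+ = \big[\tilde Q_k^3(\phi,\phi,v) + \tilde Q_k^4(\phi,\phi,\phi,v)\big] e^{-i\Phi_{<k}},
\]
where, by the computation of Lemma~\ref{l:q3}, $\tilde Q_k^3$ and $\tilde Q_k^4$ are $L_k$–type multilinear forms with output at frequency $2^k$, no surviving derivatives, carrying two (respectively three) entries equal to $\phi$ or $H\phi$ and exactly one entry equal to $v$; and $B_k(\phi,v) = 2^{-k}L_k(\phi_{<k},v_k)+2^{-k}L_k(v_{<k},\phi_k)+\sum_{j\ge k}2^{-j}L_k(\phi_j,v_j)$. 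The case $k=0$ is special in the same way as in \eqref{eq-lin-0}–\eqref{commutator B_0}: no renormalization, only a quadratic correction, with the low–low term retained and treated perturbatively.

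The bootstrap will be run on the aggregate quantity $\|v\|_{C_tH^{-\frac12}} + \|v\|_{\ell^2 S^{-\frac12}}$, using a frequency envelope $d_k$ for $v_0$ in $H^{-\frac12}$ (so $\|P_kv_0\|_{L^2}\le 2^{k/2}d_k$ and $\sum_k d_k^2\lesssim 1$). Reducing to the normal form variables, I would prove $\|\psi_k^+(0)\|_{L^2}\lesssim 2^{k/2}d_k$ — using the explicit form of $B_k(\phi,v)$, Bernstein's inequality on the low–frequency factors, and the Strichartz and bilinear bounds on $\phi$ from Theorem~\ref{apriori} — and the perturbative right–hand side bound $\|\tilde Q_k^3\|_{L^1_tL^2_x}+\|\tilde Q_k^4\|_{L^1_tL^2_x}\lesssim \epsilon^2\, 2^{k/2}d_k$, by the same case analysis as in the proof of \eqref{perturbative}: a bilinear $L^2$ estimate for the two most imbalanced frequencies and Strichartz for the remaining factors, with each $\phi$ entry contributing an $\epsilon$. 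By the Strichartz estimate \eqref{strichartz} this yields $\|\psi_k^+\|_{S^0}\lesssim 2^{k/2}d_k$, and inverting the two transformations (estimating $B_k(\phi,v)$ in $S^0$ as above) transfers the bound to $v_k^+$ and hence to $v$, closing the bootstrap for $\epsilon$ small.

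The main obstacle is that, unlike in Section~\ref{s:local}, the background $\phi$ lies only in $L^2$ while $v$ is measured at the scaling–critical regularity $H^{-\frac12}$, so there is no room to spare. The high$\times$high$\to$low interactions — occurring both in the tail $\sum_{j\ge k}2^{-j}L_k(\phi_j,v_j)$ of $B_k(\phi,v)$ and in the analogous pieces of $\tilde Q_k$ — cannot be made small frequency by frequency, since a direct estimate only gives $\sum_{j\ge k}2^{-j}\|L_k(\phi_j,v_j)\|_{L^2}\lesssim \epsilon\sum_{j\ge k}c_jd_j\lesssim \epsilon\, d_{\ge k}$, which need not be controlled by $2^{k/2}d_k$. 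This is exactly why the argument must be organized on the $\ell^2$–summed $H^{-\frac12}$ norm rather than on a fixed frequency envelope for $v$: one has $\sum_k 2^{-k}d_{\ge k}^2 = \sum_j d_j^2\sum_{k\le j}2^{-k}\lesssim \sum_j d_j^2\lesssim 1$, so these contributions are harmless after square–summing with the $H^{-\frac12}$ weights, the $\epsilon$ from the $\phi$ factors supplying the gain. A secondary, routine point is the usual low–frequency bookkeeping — the non–invertibility of $\partial_x^{-1}$ near zero frequency and the exceptional $k=0$ equation — handled exactly as in Section~\ref{s:local}. Once the a priori bound is established, well-posedness in $H^{-\frac12}$ follows from the linearity of \eqref{lin}: existence by approximating $v_0$ in $H^{-\frac12}$ by $H^3$ data and passing to the limit with the help of the uniform bound, and uniqueness together with continuous dependence directly from \eqref{S-lin}.
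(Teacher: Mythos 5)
Your overall framework (reduce to a priori bounds for regular $v$ by linearity and density, frequency envelopes, the same paradifferential operator $A^{k,+}_{BO}$ from \eqref{op Abo} and gauge $e^{-i\Phi_{<k}}$, a polarized quadratic correction, then transfer back) is indeed the paper's strategy, but the central algebraic step has a genuine gap. Replacing $B_k$ by its symmetric polarization does \emph{not} convert \eqref{lin} into an equation with only cubic and quartic right-hand side. The operator $A^{k,+}_{BO}$ removes only the interactions $\phi_{<k}\partial_x v_k^+$; the transposed paradifferential interactions, low-frequency $v$ against $\partial_x \phi_k^+$, remain inside $iP_k^+\partial_x(\phi v)$ in \eqref{linearization} and are just as non-perturbative as the ones you gauged away (the derivative sits on the high-frequency factor, and with $\phi$ only in $L^2$ and $v$ at the critical $H^{-\frac12}$ regularity, even the optimal mixed bilinear bound leaves an uncontrolled factor of order $2^{k/2}c_k$). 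The correction that would remove them has the shape $\tfrac12 HP_k^+\phi\cdot\partial_x^{-1}P_{<k}v$, i.e. \eqref{part3}: here $\partial_x^{-1}$ falls on the \emph{low}-frequency factor $v$ with no commutator structure to compensate, so it costs $2^{-j}$ (not $2^{-k}$) for $v$ at frequency $2^j<2^k$ and is ill-defined on the block $v_0$. Your displayed expansion $B_k(\phi,v)=2^{-k}L_k(\phi_{<k},v_k)+2^{-k}L_k(v_{<k},\phi_k)+\sum_{j\ge k}2^{-j}L_k(\phi_j,v_j)$ is the polarization of the commutator form \eqref{commutator B_k}, which is bounded precisely because it retains the commutator in the $v_{<k}$ slot; but for that very reason it differs from the required correction by the paraproduct piece $\partial_x^{-1}v_{<k}\cdot HP_k^+\phi$, and after your transformation a quadratic term of size $v_{<k}\,\partial_x\phi_k^+$ survives, so the bootstrap cannot close. (If instead you take the literal polarization of the full normal form, the correction itself is unbounded at low frequencies of $v$.) This is exactly where the paper must deviate from Section 4: it uses the asymmetric correction \eqref{bilinear nft lin}, $2B_k(v,\phi)-\tfrac12 HP_k^+\phi\cdot\partial_x^{-1}v_{(0,k)}$, accepting the weaker factors $\sum_{j\in(0,k)}2^{-j}L(v_j,\phi_k)$, and it keeps the uncorrectable residue $Q_k^{2,lin}(v_0,\phi_k)= i v_0\partial_x\phi_k^+-\tfrac12\partial_x(H+i)v_0\cdot\phi_k^+$ as a quadratic source term. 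Dismissing the $\partial_x^{-1}$/low-frequency issue as ``routine, handled exactly as in Section 4'' overlooks that in Section 4 the commutator saved you, while here it cannot.

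A second, related gap: your bootstrap quantity carries no mixed bilinear component. The paper propagates $\sup_y\|v_j T_y\phi_k\|_{L^2}$ (see \eqref{boot-lin-bi}) alongside the Strichartz norms, and these bounds are what make the residual quadratic term $Q_k^{2,lin}$ and several configurations of the cubic terms estimable at critical regularity, namely those in which the two separated high-frequency entries are one $v$ and one $\phi$; Strichartz plus Bernstein alone loses half a derivative there. Your proposed remedy via $\ell^2$ summation in $k$ only addresses the high-high tail $\sum_{j\ge k}2^{-j}L_k(\phi_j,v_j)$, which is in fact the innocuous part (the paper treats it exactly as in Lemma~\ref{l:invertibila}), not the low-frequency-$v$ paradifferential obstruction described above, which is the real difficulty of the linearized problem and the reason for the modified correction, the extra quadratic source term, and the bilinear component of the bootstrap.
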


We remark that as part of the proof we also show that the solutions to the linearized equation
satisfy appropriate Strichartz and bilinear $L^2$ bounds expressed in terms of the frequency envelope
of the initial data.

The rest of the section is devoted to the proof of the theorem. 
We begin by considering more regular solutions:

\begin{lemma}
\label{l:dippi}
Assume that $\phi$ is an $H^3$  solution to the Benjamin-Ono equation. 
Then the linearized equation \eqref{lin} is well-posed in $H^1$, with uniform 
bounds
\begin{equation}
\| v\|_{C(0,1;H^1)} \lesssim \|v_0\|_{H^1}.
\end{equation}
\end{lemma}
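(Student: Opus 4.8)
The plan is to run essentially the same normal form / renormalization scheme developed in Section 4 for the full equation, but now applied to the linearized variable $v$, working at regularity $H^1$ rather than $L^2$. Since $\phi \in H^3$, all the coefficients appearing below are smooth and the only issue is to obtain bounds \emph{independent} of the $H^3$ norm. First I would localize: writing $v_k = P_k v$ and projecting to positive frequencies $v_k^+ = P_k^+ v$, the equation becomes
\[
(i\partial_t + \partial_x^2) v_k^+ = i P_k^+ \partial_x(\phi v).
\]
As in \eqref{eq-lin-k+}, I extract the leading paradifferential piece $i\phi_{<k}\partial_x v_k^+$ and the companion term $\frac12(H+i)\partial_x\phi_{<k} v_k^+$ and move them to the left, so that $v_k^+$ solves $A^{k,+}_{BO} v_k^+ = (\text{milder bilinear terms})$, where the right-hand side consists of a commutator term $[P_k^+,\phi_{<k}]\partial_x v$ and a high-frequency term $P_k^+(\phi_{\geq k}\partial_x v)$ plus lower order. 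Note that here $\phi$ plays the role of a coefficient and $v$ the role of the unknown, so the bilinear structure is not symmetric, but the $L$-form bookkeeping of Lemma~\ref{l:q3} goes through with $\phi$-factors and $v$-factors tracked separately.

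Next I would perform the two-step reduction: (i) a bounded bilinear normal form correction $\tilde v_k^+ = v_k^+ + B_k(\phi, v)$, built from the analogue of \eqref{commutator B_k} with one $\phi$ and one $v$ entry, to remove the milder bilinear terms on the right; and (ii) the Cole–Hopf type conjugation $w_k^+ := \tilde v_k^+ e^{-i\Phi_{<k}}$ using the \emph{same} phase $\Phi$ (the spatial primitive of $\phi/2$ from \eqref{Phi}) to kill the remaining paradifferential term $i\phi_{<k}\partial_x v_k^+$. The outcome is a Schr\"odinger equation $(i\partial_t+\partial_x^2) w_k^+ = (\text{cubic and quartic terms, bilinear in }\phi\text{ and linear in }v) \cdot e^{-i\Phi_{<k}}$, with the case $k=0$ handled separately as before. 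Since the normal form and conjugation are bounded operations, $\|w_k^+\|_{L^2} \approx \|\tilde v_k^+\|_{L^2} \approx \|v_k^+\|_{L^2}$ (using the $L^2$ smallness of $\phi$ to control $B_k$, exactly as in Lemma~\ref{l:invertibila}), uniformly in the $H^3$ norm of $\phi$; then $\|v\|_{H^1} \approx (\sum_k 2^{2k}\|v_k\|_{L^2}^2)^{1/2} \approx (\sum_k 2^{2k}\|w_k^+\|_{L^2}^2)^{1/2}$.

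Then I would estimate the right-hand side of the $w_k^+$ equation in $L^1_t L^2_x$ (with the $2^k$-weighted $\ell^2$ summation), using the Strichartz bounds \eqref{u-small} and bilinear bounds \eqref{bilinear-small} for $\phi$ from Theorem~\ref{apriori} together with Strichartz bounds for $w$ itself, set up as a closed estimate: $\|2^k w_k\|_{\ell^2 S^0} \lesssim \|2^k w_k(0)\|_{\ell^2 L^2} + \epsilon \|2^k w_k\|_{\ell^2 S^0}$, which for $\epsilon$ small yields $\|v\|_{C(0,1;H^1)} \lesssim \|v_0\|_{H^1}$ and incidentally the Strichartz and bilinear bounds for $v$ advertised after Theorem~\ref{t:liniarizare}. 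The main obstacle I anticipate is bookkeeping the cubic terms in which the \emph{single} $v$-factor sits at the lowest frequency while the two $\phi$-factors are high and balanced: there one cannot put $v$ in a Strichartz norm and gain the needed high-frequency decay from $v$, so one must instead extract all the frequency gain from the bilinear $L^2$ estimate on the two $\phi$ factors (as in the third bullet of the proof of Lemma in Section 4) and place $v$ in $L^\infty_t L^2_x$ via Bernstein — verifying that the powers of $2$ balance at $H^1$ regularity for $v$ is the delicate point, but the derivative count works out because the derivative in $\partial_x(\phi v)$ has already been absorbed into the renormalization. Existence of $H^1$ solutions follows by approximation from the $H^3$ theory, and uniqueness plus continuity in time from the same a priori bound applied to differences.
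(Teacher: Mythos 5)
You have misread what this lemma is asking for. Immediately after its statement the paper notes that, in contrast with Theorem~\ref{t:liniarizare}, ``here the implicit constant is allowed to depend on the $H^3$ norm of $\phi$'': the lemma is a soft existence/uniqueness statement whose only role is to guarantee that $H^1$ solutions of \eqref{lin} exist so that the later frequency-envelope bootstrap has something to run on. Accordingly the paper's proof is elementary: an $L^2$ energy estimate $\frac{d}{dt}\|v\|_{L^2}^2=\frac12\int v^2\phi_x\,dx\lesssim \|\phi_x\|_{L^\infty}\|v\|_{L^2}^2$ plus Gronwall gives uniqueness, solving the backward adjoint problem $(\partial_t+H\partial_x^2)w=\phi\,\partial_x w$ by the same estimate gives existence by duality, and $H^1$ well-posedness follows by writing the system for $(v,v_1:=v_x)$, which is again $L^2$ well-posed, and propagating the constraint $v_1=v_x$. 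No normal form, renormalization, Strichartz or bilinear estimates are needed, and the constants may (and do) depend on $\|\phi\|_{H^3}$ through $\|\phi_x\|_{L^\infty}$, $\|\phi_{xx}\|_{L^\infty}$.

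Beyond being far more machinery than the lemma requires, the stronger statement you set out to prove --- an $H^1$ bound for the linearized flow \emph{independent} of the $H^3$ norm of $\phi$ --- is not expected to hold, and your scheme breaks exactly at the point you flag as delicate. When $v$ sits at low frequency and $\phi$ carries the output frequency $2^k$ (such terms already appear in the correction $B_k^{lin}$ through $\sum_{j<k}2^{-j}L(v_j,\phi_k)$, and again in the cubic terms), the output at frequency $2^k$ is proportional to the $L^2$ frequency envelope $\epsilon c_k$ of $\phi$ times low-frequency norms of $v$; after applying the $H^1$ weight $2^k$ this produces $2^k c_k$, which with $\phi_0$ merely in $L^2$ is not controlled by $\epsilon$ times an $H^1$ envelope of $v$. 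So neither the claimed equivalence $\|v\|_{H^1}\approx(\sum 2^{2k}\|w_k\|_{L^2}^2)^{1/2}$ uniformly in $\|\phi\|_{H^3}$, nor the closed estimate $\|2^kw_k\|_{\ell^2S^0}\lesssim \|v_0\|_{H^1}+\epsilon\|2^kw_k\|_{\ell^2S^0}$, can be obtained: the derivative count does not work out once the derivative weight falls on the $\phi$-factor. This is precisely why the paper runs the uniform analysis at the self-dual regularity $H^{-\frac12}$; by its own duality-plus-interpolation remark in Section 5, a uniform $H^1$ theory for the linearized equation would yield a uniform $L^2$ theory and hence Lipschitz dependence of the flow in $L^2$, which is known to be false.
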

Compared with the main theorem, here the implicit constant is allowed to depend on the 
$H^3$ norm of $\phi$.

\begin{proof}
  The lemma is proved using energy estimates. We begin with the easier
  $L^2$ well-posedness.  On one hand, for solutions for \eqref{lin}
we have the bound 
\[
\frac{d}{dt} \| v\|_{L^2}^2 = \int_\R v \partial_x (\phi v)\, dx =  \frac12 \int_\R v^2 \partial_x \phi  \, dx  \lesssim \|\phi_x\|_{L^\infty}  \| v\|_{L^2}^2 ,
\]
which by Gronwall's inequality shows that 
\[
\| v\|_{L^\infty _tL^2_x} \lesssim \|v_0\|_{L^2_x} ,
\]
thereby proving uniqueness. On the other hand, for the (backward) adjoint problem
\begin{equation}
(\partial_t +H \partial^2_x) w  =   \phi \partial_x w, \qquad w(1) = w_1
\end{equation}
we similarly have 
\[
\| w\|_{L^\infty _tL^2_x} \lesssim \|w_1\|_{L^2_x} ,
\]
which proves existence for the direct problem.

To establish $H^1$ well-posedness in a similar manner we rewrite our
evolution as a system for $(v,v_1:= \partial_x v)$,
\[
\left\{
\begin{array}{l} 
(\partial_t +H \partial^2_x) v  =   \partial_x  ( \phi v ), \cr
(\partial_t +H \partial^2_x) v_1  =   \partial_x  ( \phi v_1 ) + \phi_x v_1 + \phi_{xx} v.
\end{array}
\right.
\]
An argument similar to the  above one shows that this system is also $L^2$ well-posed.
Further, if initially we have $v_1 = v_x$ then this condition is easily propagated in time.
This concludes the proof of the lemma.
\end{proof}

Given the last Lemma ~\ref{l:dippi}, in order to prove Theorem ~\ref{t:liniarizare}, it suffices to show that the 
$H^1$ solutions $v$ given by the Lemma~\ref{l:dippi} satisfy the bound \eqref{S-lin}. It is convenient 
in effect to prove stronger bounds. To state them we assume that $\| v(0)\|_{H^{-\frac12}} \leq 1$,
and consider a frequency envelope  $d_k$ for $v(0)$ in $H^{-\frac12}$.  Without any restriction in generality 
we may assume that $c_k \leq d_k$, where $c_k$ represents an $L^2$ frequency envelope for $\phi(0)$ 
as in the previous section. With these notations, we aim to  prove that the dyadic pieces $v_k$ of $v$ 
satisfy the Strichartz estimates
\[
\| v_k \|_{S^0} \lesssim 2^{\frac{k}{2}} d_k,
\]
as well as the bilinear $L^2$ estimates 
\[
\|L(v_j, \phi_k)\|_{L^2} \lesssim \epsilon d_j c_k 2^{\frac{j}{2}}\cdot 2^{-\frac{\min\{j,k\}}{2}}.
\]
Again, here we allow for $j = k$ under a $2^{k-4}$ frequency separation condition.
Since $v$ is already in $H^1$ and $\phi$ is in $H^3$, a continuity argument shows that it suffices to make
the bootstrap assumptions
\begin{equation}
\label{boot-lin}
\| v_k\|_{S^0} \leq C 2^{\frac{k}{2}} d_k,
\end{equation}
\begin{equation}
\label{boot-lin-bi}
\sup_{y\in \R}\| v_j T_y\phi_k\|_{L^2} \lesssim C\epsilon d_j c_k 2^{\frac{j}{2}} 2^{-\frac{\min\{j,k\}}{2}} ,\quad j\neq k,
\end{equation}
and prove that
\begin{equation}
\label{get-lin}
\| v_k\|_{S^0} \lesssim ( 1 + \epsilon C) 2^{\frac{k}{2}} d_k,
\end{equation}
respectively 
\begin{equation}
\label{get-lin-bi}
\sup_{y\in \R}\| v_j T_y\phi_k\|_{L^2} \lesssim  \epsilon ( 1 + \epsilon C)  d_j c_k 2^{\frac{j}{2}} 2^{-\frac{\min\{j,k\}}{2}} , \quad j\neq k .
\end{equation}

We proceed in the same manner as for the nonlinear equation, rewriting the 
linearized equation in paradifferential form as 
\begin{equation}
\label{linearization}
A_{BO}^{k,+} v_k^+ = i P_k^+ \partial_x ( \phi \cdot v)  -  i  \phi_{<k}  \partial_x v_k^+
+ \frac12 \partial_x (H+i) \phi_{<k} \cdot  v_k^+.
\end{equation}
Here, in a similar manner as before, we isolate the case $k = 0$, where no paradifferential terms 
are kept on the left.

The next step is to use a normal form transformation to eliminate
quadratic terms on the right, and replace them by cubic terms. The
difference with respect to the prior computation is that here we leave
certain quadratic terms on the right, because their corresponding
normal form correction would be too singular. To understand why this
is so we begin with a formal computation which is based on our prior
analysis for the main problem. Precisely, the normal form which
eliminates the full quadratic nonlinearity in the linearized equation
(i.e. the first term on the right in \eqref{linearization}) is
obtained by linearizing the normal for for the full equation, and is
given by
\begin{equation}
\label{part1}
-\frac{1}{4}P_{k} ^{+}\left[ Hv\cdot \partial_{x}^{-1}\phi\right] -\frac{1}{4}P_{k}^{+}H\left[ v \cdot \partial^{-1}_x\phi\right]
-\frac{1}{4}P_{k} ^{+}\left[ H\phi\cdot \partial_{x}^{-1}v\right]-\frac{1}{4}P_{k}^{+}H\left[ \phi \cdot \partial^{-1}_xv\right].
\end{equation}
On the other hand, the correction which eliminates the paradifferential component (i.e., last two terms in \eqref{linearization}) is given by
\begin{equation}
\label{part2}
\frac{1}{2}HP_{k}^{+}v \cdot \partial_{x}^{-1}P_{<k}\phi ,
\end{equation}
which corresponds to an asymmetric version of the first term in $B_k$
in \eqref{partial nft}. Thus, the full normal form correction for the
right hand side of the equation \eqref{linearization} is \eqref{part1}
$+$ \eqref{part2}.  The term in \eqref{part2} together with the last
two entries in \eqref{part1} yield a commutator structure as in $B_k$
in the previous section. To obtain a similar commutator structure for
the first two terms in \eqref{part1} we would need an additional
correction
\begin{equation}
\label{part3}
\frac{1}{2}HP_{k}^{+}\phi \cdot \partial_{x}^{-1} P_{<k} v.
\end{equation}
Precisely, if we add the three expressions above we obtain the linearization of $B_k$, 
\[
\eqref{part1}+\eqref{part2}+\eqref{part3}=2B_{k}(v, \phi),
\]
where $B_k$ stands for the symmetric bilinear form associated to the
quadratic form $B_k$ defined in \eqref{commutator B_k}.
Hence, our desired normal form correction is 
\[
\eqref{part1}+\eqref{part2}=2B_{k}(v, \phi)-\eqref{part3}.
\]
Unfortunately the expression \eqref{part3} contains $\partial_x^{-1}v$
which is ill defined at low frequencies. Unlike in the analysis of the 
main equation in the previous section, here we also have no commutator
structure to compensate. To avoid this problem we
exclude the frequencies $<1$ in $v$ from the \eqref{part3} part of the
normal form correction. Thus, our quadratic normal form correction
will be
\begin{equation}
\label{bilinear nft lin}
\begin{aligned}
B_{k}^{lin}(\phi, v) =&2B_{k}(v, \phi)-\frac{1}{2}HP_{k}^{+}\phi \cdot \partial_{x}^{-1}v_{(0,k)}.
\end{aligned}   
\end{equation}
This serves as a quadratic correction for the full quadratic terms in the right hand side of 
 \eqref{linearization}, except for the term which corresponds to the frequencies of size $O(1)$ 
in $w$, namely the expression 
\[
Q^{2,lin}_k(\phi,v) =  i  v_{0} \partial_x  \phi_k^+ - \frac12 \partial_x (H+i) v_{0} \cdot  \phi_k^+.
\]

Following the same procedure as in the normal form transformation for the full equation we
 denote the first normal form correction in the linearized equation by
\begin{equation}\label{partial-lin-nft}
\tv_k^+ := v_k^+ + 2 B_k^{lin}(\phi,v).
\end{equation}
 The equation for $\tv_k^+$ has the form
\begin{equation}
A_{BO}^{k,+} \tv_k^+ =  Q_k^{3,lin}(\phi,\phi,v) + Q_k^{2,lin}( v_0,\phi_k).
\end{equation}
Here $Q_k^{2,lin}$ is as above, whereas $Q_k^{3,lin}$ contains the linearization 
of $Q_k^3$ plus the extra contribution arising from the second term in $B_k^{lin}$,
namely 
\begin{equation}
Q_k^{3,lin}(\phi,\phi,v) = 3 Q_k^{3}(\phi,\phi,v) +  \frac{i}2 \phi_k^+ P_{(0,k)} (v \phi) +  
  \frac{i}2  P_k^+ \partial_x (\phi^2) \partial_{x}^{-1} v_{(0,k)} .
\end{equation}
Again there is a straightforward adjustment in this analysis for the case $k=0$,
following the model in the previous section. This adds a trivial low frequency 
quadratic term on the right.

Finally, for $k > 0$ we renormalize $\tv_k^+$ to 
\[
w_k := e^{-i \Phi_{<k}} \tv_k^+,
\]
which in turn solves the inhomogeneous Schr\"odinger equation 
\begin{equation}
 \label{conjugare-lin}
 \begin{aligned}
(i\partial_t +\partial^2_x)\, w_k = [Q_k^{2,lin}( \phi_0,v_k^+) + 3 \tilde Q_k^{3,lin}(\phi, \phi, v) + 
\tilde Q_k^{4,lin}(v, \phi, \phi,\phi)] e^{-i\Phi_{<k}},
 \end{aligned}
 \end{equation}
where  
\[
\tilde Q_k^{3,lin}(v, \phi, \phi)= Q_k^{3,lin}(v, \phi, \phi) + \frac14  v_k^+ \left( 2\cdot P_{<k}(\phi^2)  -  
\left( P_{<k}\phi\right) ^2 \right),
\]
and 
\[
\tilde Q_k^{4,lin}(v, \phi, \phi)= Q_k^{3,lin}(v, \phi, \phi) + \frac14  B_{k}^{lin}(v,\phi)  \left( 2\cdot P_{<k}(\phi^2)  -  
\left( P_{<k}\phi\right) ^2 \right).
\]
Our goal is now to estimate the initial data for $w_k$ in $L^2$, and the inhomogeneous term 
in $L^1_t L^2_x$. We begin with the initial data, for which we have

\begin{lemma}
The initial data for $w_k$ satisfies
\begin{equation}\label{wk-data}
\| w_k(0) \|_{L^2} \lesssim 2^{\frac{k}2} d_k.
\end{equation}
\end{lemma}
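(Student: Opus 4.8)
The plan is to unwind the definition $w_k(0) = e^{-i\Phi_{<k}(0)} \tv_k^+(0)$ and reduce, since conjugation by the unimodular exponential $e^{-i\Phi_{<k}}$ preserves $L^2$ norms, to bounding $\|\tv_k^+(0)\|_{L^2}$. By \eqref{partial-lin-nft} we have $\tv_k^+ = v_k^+ + 2B_k^{lin}(\phi,v)$, so it suffices to prove the two bounds
\[
\|v_k^+(0)\|_{L^2} \lesssim 2^{\frac{k}2} d_k, \qquad \|B_k^{lin}(\phi,v)(0)\|_{L^2} \lesssim 2^{\frac{k}2} d_k.
\]
The first is immediate from the definition of $d_k$ as a frequency envelope for $v(0)$ in $H^{-\frac12}$, since $\|P_k^+ v(0)\|_{L^2} \le \|P_k v(0)\|_{L^2} \le 2^{\frac{k}2}\cdot 2^{-\frac{k}2}\|P_k v(0)\|_{L^2} = 2^{\frac{k}2}( 2^{-\frac{k}2}\|P_k v(0)\|_{L^2}) \le 2^{\frac{k}2} d_k$. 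So the real content is the bound on the bilinear correction $B_k^{lin}(\phi,v)$.

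For the bilinear term I would use the decomposition \eqref{bilinear nft lin}, namely $B_k^{lin}(\phi,v) = 2B_k(v,\phi) - \tfrac12 H P_k^+ \phi \cdot \partial_x^{-1} v_{(0,k)}$. For the first piece, I would mimic the computation in Lemma~\ref{l:q3}: the symmetric bilinear form $B_k(v,\phi)$ decomposes (by Lemma~\ref{commutator} applied to the commutator term, and \eqref{dipi} for the high-high term) into a sum of the schematic type $2^{-k} L_k(v_{<k},\phi_k) + 2^{-k}L_k(\phi_{<k},v_k) + \sum_{l\ge k} 2^{-l} L_k(v_l,\phi_l)$ — that is, one factor always carries a $2^{-j}$ gain from the inverse derivative where $2^j$ is the high frequency. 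Then I estimate each term in $L^2$ by placing the high-frequency factor in $L^2$ and the low-frequency factor in $L^\infty$ via Bernstein. For the low-high term $2^{-k}L_k(v_{<k},\phi_k)$: using $\|\phi_k\|_{L^2}\le c_k\epsilon\le d_k$ and $\|v_{<k}\|_{L^\infty}\lesssim \sum_{l<k}2^{l/2}\|v_l\|_{L^2} = \sum_{l<k}2^{l} (2^{-l/2}\|v_l\|_{L^2}) \lesssim 2^k d_k$ (slowly varying), one gets $2^{-k}\cdot 2^k d_k \cdot d_k \lesssim d_k$, which is much better than $2^{k/2}d_k$. The high-low term $2^{-k}L_k(\phi_{<k},v_k)$ and the high-high sum are handled the same way, using the envelope properties of $c_k$ and $d_k$ and the slowly varying summation; in each case the $2^{-j}$ from the inverse derivative more than compensates the $2^{j/2}$ growth of $d_j$.

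The second piece, $H P_k^+\phi\cdot \partial_x^{-1} v_{(0,k)}$, is where the frequency truncation $(0,k)$ on $v$ matters. Here $P_k^+\phi$ has frequency $\sim 2^k$ and $\|P_k^+\phi\|_{L^2}\le c_k\epsilon$, while $\partial_x^{-1}v_{(0,k)} = \sum_{0<l<k} 2^{-l}L(v_l)$ so that $\|\partial_x^{-1}v_{(0,k)}\|_{L^\infty} \lesssim \sum_{0<l<k} 2^{-l}\cdot 2^{l/2}\|v_l\|_{L^2} = \sum_{0<l<k} 2^{-l/2}(2^{-l/2}\|v_l\|_{L^2})\cdot 2^{l/2} $; organizing this as $\sum_{0<l<k} \|v_l\|_{L^2}\cdot 2^{-l} \cdot 2^{l/2}$... more carefully, $\|\partial_x^{-1} v_l\|_{L^\infty} \lesssim 2^{-l}\|v_l\|_{L^\infty}\lesssim 2^{-l}2^{l/2}\|v_l\|_{L^2} = 2^{-l/2}\|v_l\|_{L^2} \le 2^{-l/2}\cdot 2^{l/2} d_l = d_l$, and summing over $0<l<k$ with the slowly varying property gives $\lesssim d_k$ (the sum is dominated by its top end up to the $2^{\delta|k-l|}$ factor). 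Hence this piece is bounded by $\|P_k^+\phi\|_{L^2}\cdot\|\partial_x^{-1}v_{(0,k)}\|_{L^\infty}\lesssim \epsilon c_k d_k \lesssim d_k \le 2^{k/2}d_k$. The case $k=0$ is handled separately as indicated in the text, with only the trivial low-frequency modification and no renormalization, so $w_0 = \tv_0^+$ and the same estimates apply directly. The main obstacle is purely bookkeeping: keeping track of which factor absorbs the inverse-derivative gain and verifying that in the worst configuration (low-high, with the low factor in $L^\infty$ costing a full $2^k$ from Bernstein) the net power of $2^k$ does not exceed $2^{k/2}$ — which it does not, since the $2^{-k}$ prefactor from $B_k$ leaves a surplus.
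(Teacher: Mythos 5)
Your proposal follows essentially the same route as the paper: conjugation by the unimodular factor $e^{-i\Phi_{<k}}$ reduces matters to $\tv_k^+=v_k^+ +2B_k^{lin}(\phi,v)$, the term $v_k^+$ is controlled directly by the $H^{-\frac12}$ envelope, and $B_k^{lin}$ is estimated exactly as in Lemma~\ref{l:invertibila}, via the decomposition \eqref{bilinear nft lin} plus Bernstein and the envelope properties. The only inaccuracy is quantitative: slow variation does not give $\sum_{0<l<k} d_l \lesssim d_k$ (nor is the high--high sum over $j\geq k$ dominated by its $j=k$ term without retaining a negative power of $2^{j-k}$, e.g.\ by applying Bernstein at the output frequency $2^k$); one only gets $\sum_{0<l<k} d_l \lesssim 2^{\delta k} d_k$, or $\lesssim k^{\frac12}$ by Cauchy--Schwarz, which is precisely why the paper records the correction bound as $k\epsilon d_k$ rather than $\epsilon d_k$ --- but since the target $2^{\frac{k}2}d_k$ leaves ample room, this slip does not affect the conclusion \eqref{wk-data}.
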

\begin{proof}
It suffices to prove the similar estimate for $\tv_k$, which in turn reduces to 
estimating $B_k^{lin}(\phi,v)$. The same argument as in the proof of Lemma~\ref{l:invertibila}
yields
\[
\| B_k^{lin}(\phi,v) \|_{L^2} \lesssim  k \epsilon d_k , 
\]
which is stronger than we need.
\end{proof}

Next we consider the inhomogeneous term:
\begin{lemma}
The  inhomogeneous terms in the $w_k$ equation satisfy
\begin{equation}\label{wk-inhom}
  \|  Q_k^{2,lin} \|_{L^1 L^2} + \| \tilde Q_k^{3,lin} \|_{L^1 L^2} +\| \tilde Q_k^{4,lin} \|_{L^1 L^2} \lesssim 
2^{\frac{k}2} C \epsilon  d_k.
\end{equation}
\end{lemma}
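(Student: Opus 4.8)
The plan is to estimate each of the three types of inhomogeneous terms separately, following the template of the corresponding bound \eqref{perturbative} for the full equation, but keeping careful track of the $H^{-\frac12}$ frequency weights $d_k$ attached to $v$ versus the $L^2$ weights $c_k$ attached to $\phi$. Throughout I will use the Strichartz bounds \eqref{boot-lin}, the bilinear bounds \eqref{boot-lin-bi}, and the nonlinear bounds \eqref{boot1}--\eqref{boot2}, together with the structural description of $Q_k^3$ (hence of its linearizations) from Lemma~\ref{l:q3}; all of these reduce the matter to purely multilinear $L$-form estimates. The exponential factor $e^{-i\Phi_{<k}}$ is bounded pointwise and disappears in every estimate, so I drop it immediately.

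First I would handle $\tilde Q_k^{3,lin}$, which by Lemma~\ref{l:q3} is a sum of forms $L_k(v,\phi,\phi)$ and $L_k(H(\cdot),\cdot,\cdot)$ with one $v$ entry and two $\phi$ entries (the extra $v_k^+ P_{<k}(\phi^2)$ piece being trivial to bound by two Strichartz factors and a bilinear or $L^\infty$ factor). As in the proof of \eqref{perturbative} I run the frequency trichotomy on the three inputs: (i) if the $v$-frequency $2^{k_1}$ is below the balanced $\phi$-frequency $2^{k_2}=2^{k_3}$, use the bilinear $L^2$ bound \eqref{boot-lin-bi} on the $v_{k_1}\phi_{k_3}$ pair (gaining $2^{-\min\{k_1,k_3\}/2}=2^{-k_1/2}$... more precisely the $2^{-\max/2}$ gain appropriate to the two frequency-separated factors) together with a single $L^4_tL^\infty_x$ Strichartz bound on the remaining $\phi_{k_2}$; (ii) if all three frequencies are comparable to $2^k$, use three Strichartz bounds directly; (iii) if all three are $\gg 2^k$, then two of the absolute values are separated by $\gtrsim 2^{k_3-4}$, so again a bilinear $L^2$ estimate plus one Strichartz applies. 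In each case, tracking which factor carries $d$ and which carry $c$, and using the slowly-varying property to sum the resulting geometric series, produces $2^{k/2}C\epsilon d_k$ — where the single power of $\epsilon$ comes from the two $\phi$ entries (one contributing $\epsilon$, one absorbed into $C$ via the Strichartz bound, or vice versa) and the single power of $C$ from the bootstrap hypotheses on $v$.

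Next, $\tilde Q_k^{4,lin}$ has the form $\tfrac14 B_k^{lin}(v,\phi)\{2P_{<k}(\phi^2)-(P_{<k}\phi)^2\}$, so its $L^1_tL^2_x$ norm follows from the $L^\infty_tL^2_x$ bound on $B_k^{lin}(v,\phi)$ — which is exactly the estimate $\lesssim k\epsilon d_k$ from the previous lemma, or rather its time-dependent Strichartz upgrade $\lesssim 2^{k/2}\epsilon C d_k$ obtained the same way but without spending Bernstein, cf.\ the passage around \eqref{psi-err} — multiplied by two $L^4_tL^\infty_x$ Strichartz bounds on the two low-frequency $\phi$ factors, which contribute $\epsilon\cdot\epsilon$ but are summable and bounded by $O(\epsilon^2)$, well inside the budget. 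Finally, the genuinely new term is $Q_k^{2,lin}(\phi_0,v_k^+)= i v_k^+\partial_x\phi_0 -\tfrac12\partial_x(H+i)v_0\cdot\phi_k^+$ — wait, here it is $Q_k^{2,lin}(\phi_0,v_k^+)$, i.e.\ the low-frequency factor is $\phi_0$ and the high factor is $v_k^+$: it is a product of $v_k^+$ (Strichartz: $C2^{k/2}d_k$ in $L^4_tL^\infty_x$, hence after pairing with an $L^{4/3}_tL^1_x$ factor...) with a single frequency-$O(1)$ piece of $\phi$; since that low piece $\phi_0$ is bounded in $L^\infty_{t,x}$ by $\epsilon$ (Bernstein plus mass), Hölder in $L^1_tL^2_x = L^\infty_tL^2_x\cdot L^1_tL^\infty_x$ over the unit time interval gives $\|Q_k^{2,lin}\|_{L^1L^2}\lesssim \|v_k^+\|_{L^\infty L^2}\|\phi_0\|_{L^1L^\infty}\lesssim 2^{k/2}d_k\cdot\epsilon C$, as needed (the $\partial_x$ falling on $\phi_0$ costs only $O(1)$, and the $\partial_x(H+i)v_0$ variant is the $k=0$-type term, handled by the special low-frequency adjustment mentioned after the display for $Q_k^{3,lin}$).

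The main obstacle is the same one as in the nonlinear a-priori estimate: the derivative in the nonlinearity is borderline, so in the high-high-to-low interaction (case (i) and case (iii) above) one cannot afford any frequency loss, and the argument only closes because the bilinear $L^2$ estimate \eqref{boot-lin-bi} supplies exactly a $2^{-\max/2}$ gain that compensates the derivative and makes the dyadic sums converge; getting the weights $d_j$ versus $c_k$ to land on the correct factors in each of the three cases — and in particular making sure the $v$-entry always picks up $d$ and never a spurious extra $\epsilon$ — is the delicate bookkeeping, but it is forced once one observes that $Q_k^{3,lin}$ is linear in $v$ and quadratic in $\phi$. There is no new analytic input beyond what was used for \eqref{perturbative}; the case $k=0$ is disposed of by the remark that it only adds a harmless low-frequency quadratic term.
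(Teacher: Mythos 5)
Your handling of the generic cubic terms (frequency trichotomy with bilinear $L^2$ plus Strichartz bounds) and of the quartic term ($B_k^{lin}$ in $L^2$ times two Strichartz factors) does follow the paper's argument, but there are two genuine gaps. First, $\tilde Q_k^{3,lin}$ is \emph{not} just a sum of bounded forms $L_k(v,\phi,\phi)$: it contains the exceptional term $\tfrac{i}{2} P_k^+\partial_x(\phi^2)\,\partial_x^{-1}v_{(0,k)}$, where the inverse derivative acts on $v$ at intermediate frequencies $2^j$ with $0<j<k$ while the derivative acts at output frequency $2^k$. Expanding it produces coefficients $2^{k-j}\gg 1$, namely $\sum_{j\in(0,k)}2^{k-j}L_k(\phi_k,\phi_{<k},v_j)+\sum_{j\in(0,k)}\sum_{l\geq k}2^{k-j}L_k(\phi_l,\phi_l,v_j)$, so it is not covered by your blanket claim and your trichotomy does not apply to it as stated. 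The paper estimates this piece separately, using that there are always two unbalanced frequencies, so a bilinear $L^2$ bound together with one Strichartz bound supplies the $2^{-\max/2}$ gain that absorbs the $2^{k-j}$ loss after summing in $j$ (and $l$). Without this step the proof of \eqref{wk-inhom} is incomplete.

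Second, your estimate of $Q_k^{2,lin}$ rests on a misreading of the term. By its definition, $Q_k^{2,lin}=i\,v_0\,\partial_x\phi_k^+-\tfrac12\partial_x(H+i)v_0\cdot\phi_k^+$: in both pieces $v$ carries the $O(1)$ frequencies and $\phi$ the frequency $2^k$, and in the first piece the derivative falls on the \emph{high}-frequency factor $\phi_k^+$, costing $2^k$. Your H\"older bound $\|v_k^+\|_{L^\infty L^2}\|\phi_0\|_{L^1L^\infty}$ estimates a different, harmless expression; applied to the actual term it yields at best $2^{k}\epsilon\, c_k d_0$, which misses the target $2^{k/2}C\epsilon d_k$ by a factor of $2^{k/2}$, since only $c_k\leq d_k$ is assumed. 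What saves this term is precisely the bilinear $L^2$ bound between the frequency-separated factors $v_0$ and $\phi_k$ (cf.\ Corollary~\ref{c:bi-jk} and the bootstrap hypothesis \eqref{boot-lin-bi}), whose $2^{-k/2}$ gain compensates the derivative; this is how the paper argues, in one line. Your aside that the $\partial_x(H+i)v_0\cdot\phi_k^+$ piece is ``the $k=0$-type term'' is also off target: it is an output at frequency $2^k$ with a harmless derivative on $v_0$, estimated by the same bilinear bound, and has nothing to do with the special $k=0$ adjustment.
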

\begin{proof}
We begin with $Q^{2,lin}_k$, which is easily estimated in $L^2$ using the bilinear Strichartz estimates 
\eqref{boot-lin-bi} in our bootstrap assumption. 

All terms  in the  cubic part $\tilde Q_k^{3,lin}$ have the form 
$L_k(\phi,\phi,v)$ possibly with an added harmless Hilbert transform,
except for the expression $  P_k^+ \partial_x (\phi^2) \partial_{x}^{-1} v_{(0,k)}$.
 For this we have the bound
\[
\| L_k(\phi,\phi,v)\|_{L^1 L^2} \lesssim 2^{\frac{k}2} C^2 \epsilon^2 d_k .
\]
The proof is identical to the similar argument for the similar bound
in Lemma~\ref{perturbative}; we remark that the only difference occurs
in the case when $v$ has the highest frequency, which is larger than
$2^k$.  

We now consider the remaining expression $  P_k^+ \partial_x (\phi^2) \partial_{x}^{-1} v_{(0,k)}$,
which admits the expansion
\[
 P_k^+ \partial_x (\phi^2) \partial_{x}^{-1} v_{(0,k)} = \sum_{j \in (0,k)} 2^{-j} 2^{k} L_k(\phi_k,\phi_{<k},v_j)
+ \sum_{j \in (0,k)} \sum_{l\geq k} 2^{-j} 2^{k} L_k(\phi_l,\phi_{l},v_j) .
\]
Here we necessarily have two unbalanced frequencies, therefore this expression is estimated 
by a direct application of the bilinear $L^2$ bound plus a Strichartz estimate.

The bound for the quartic term is identical to the one in Lemma~\ref{perturbative}.
\end{proof}

Now we proceed to recover the Strichartz and bilinear $L^2$ bounds. 
In view of the last two Lemmas we do have the Strichartz bounds for $w_k$, and thus for $\tv_k$.
On the other hand for the quadratic correction $B^{lin}_k(\phi,v)$ we have
\[
B_k^{lin}(\phi,v) = 2^{-k} L(\phi_{<k},v_k) + \sum_{j \in (0,k)}  2^{-j} L(v_j,\phi_k) + \sum_{j \geq k} 2^{-j} L(\phi_j,v_j).
\]
Therefore, applying one Strichartz and one Bernstein inequality, we obtain
\[
\|  B_k^{lin}(\phi,v) \|_{S} \lesssim C\epsilon d_k ,
\]
which suffices in order to transfer the Strichartz bounds to $v_k$.

To recover the bilinear $L^2$ bounds we again follow the argument in 
the proof of Theorem~\ref{apriori}. Our starting point is  the bilinear $L^2$ bound 
\[
\| \tilde P_j v_j \cdot \tilde P_k \psi_k\|_{L^2} \lesssim C \epsilon d_j c_k 2^{\frac{j}2} 2^{-\frac{\max\{j,k\}}2} 
\]
which is a consequence of Lemma~\ref{l:bi}. To fix the notations we assume that $j < k$;
the opposite case is similar.  To transfer this bound to $v_j^+ \phi_k^+$ we 
write  
\[
\tilde P_j v_j  \tilde P_k \psi_k -  \phi_j^+ e^{-i \Phi_{<j}} \phi_k^+  e^{-i \Phi_{<k}} =
\tilde P_j v_j  (\tilde P_k \psi_k -  \phi_k^+ e^{-i \Phi_{<k}})+
(\tilde P_j v_j  -   v_j^+ e^{-i \Phi_{<j}})   \phi_k^+  e^{-i \Phi_{<k}} .
\]
For the first term we use the bound \eqref{psi-err} for the second
factor combined with the Strichartz bound for the first factor.  It
remains to consider the second term.  We freely drop the exponential,
and use the commutator result in  Lemma~\ref{commutator} to express the difference in
the second term as
\[
\begin{split}
\tilde P_j w_j  -  v_j^+ e^{-i \Phi_{<j}}  = & \ 
(\tilde P_j-1) (\tv^+ e^{  -i \Psi_{<j}}) + B_j^{lin} (\phi,v) e^{-i \Phi_{<j}} 
\\
= & \ 
[\tilde P_j-1, e^{  -i \Phi_{<j}}] v_j^+ +
(\tilde P_j-1) ( B_j (\phi,v)  e^{  -i \Psi_{<j}})
+  B_j (\phi,v) e^{-i \Phi_{<j}} 
\\ 
= & \ 2^{-j} L(\partial_x   e^{  -i \Phi_{<j}}, \phi_j^+)  + L( B_j (\phi,v), e^{-i \Phi_{<j}}) 
\\
= & \  2^{-j} L( \phi_{<j}, v_j,  e^{  -i \Phi_{<j}}) 
+ 2^{-j} L( v_{<j}, \phi_j,  e^{  -i \Phi_{<j}}) + L( \partial^{-1} v_{(0,j)}, \phi_j,  e^{  -i \Phi_{<j}}) 
\\ & \ + \sum_{l > j} 2^{-l} L(v_l,\phi_l, e^{-i \Phi_{<j}}) .
\end{split}
\] 
Now we multiply this by $\phi_k^+$, and estimate in $L^2$ using our 
bootstrap hypothesis. For $l \neq k$ we can use a bilinear $L^2$ estimate 
combined with an $L^\infty$ bound obtained via Bernstein's inequality.
For $l = k$ we use three Strichartz bounds. The exponential is harmlessly discarded in all cases.
We obtain
\[
\| (\tilde P_j w_j  -   \phi_j^+ e^{-i \Phi_{<j}} ) \phi_k^{+}\|_{L^2}
\lesssim C \epsilon^2 2^{-\frac{k}2} d_j d_k  
\]
which suffices.  The same argument applies when the roles of $j$ and $k$ are interchanged.

\section{ $L^2$ well-posedness for Benjamin-Ono}

Here we prove our main result in Theorem~\ref{thm:lwp}. By scaling we can assume that our initial data satisfies
\begin{equation}\label{small+}
\| \phi_0\|_{L^2} \leq \epsilon \ll 1,
\end{equation}
and prove well-posedness up to time $T = 1$. We know that if in addition $\phi_0 \in H^3$ 
then solutions exist, are unique and satisfy the bounds in Theorem~\ref{apriori}. For $H^3$ data
we  can also use the  bounds for the linearized equation in Theorem~\ref{t:liniarizare} to compare 
two solutions,
\begin{equation}\label{lip}
\| \phi^{(1)} - \phi^{(2)}\|_{S^{-\frac12}} \lesssim \| \phi^{(1)}(0) - \phi^{(2)}(0)\|_{C(0,1;H^{-\frac12})}.
\end{equation}
We call this property \emph{weak Lipschitz dependence on the initial data}.

We next use the above Lipschitz property to construct solutions for $L^2$ data. Given any initial 
data $\phi_0 \in L^2$ satisfying \eqref{small+}, we consider the corresponding regularized data
\[
\phi^{(n)}(0) = P_{<n} \phi_0.
\]
These satisfy uniformly the bound \eqref{small+}, and further they admit a uniform frequency envelope 
$\epsilon c_{k}$ in $L^2$,
\[
\| P_k  \phi^{(n)}(0) \|_{L^2} \leq \epsilon c_k .
\]
By virtue of Theorem ~\ref{apriori}, the corresponding solutions
$\phi^{(k)}$ exist in $[0,1]$, and satisfy the uniform bounds 
\begin{equation}\label{S-unif}
\| P_k  \phi^{(n)} \|_{S} \lesssim  \epsilon c_k .
\end{equation}
On the other hand, the differences satisfy 
\[
\| \phi^{(n)} - \phi^{(m)}\|_{S^{-\frac12}} \lesssim \| \phi^{(1)}(0) - \phi^{(2)}(0)\|_{H^{-\frac12)}} \lesssim 
(2^{-n} + 2^{-m}) \epsilon .
\]
Thus the sequence $\phi^{(n)}$ converges to some function $\phi$ in $S^{-\frac12}$,
\[
\| \phi^{(n)} - \phi\|_{S^{-\frac12}} \lesssim  2^{-n} \epsilon .
\]
In particular we have convergence in $S$ for each dyadic component, therefore
the function $\phi$ inherits the dyadic bounds in \eqref{S-unif},
\begin{equation}\label{fe-l2}
\| P_k \phi\|_{S}   \lesssim  \epsilon c_k.
\end{equation}
This further allows us to prove convergence in $\ell^2 S$. For fixed $k$ we write
\[
\limsup \| \phi^{(n)} - \phi\|_{\ell^2 S} \leq \limsup \| P_{<k} (\phi^{(n)} - \phi)\|_{\ell^2 S}
+ \| P_{\geq k} \phi\|_{\ell^2 S} + \limsup \| P_{\geq k} \phi^{(n)} \|_{\ell^2 S} \leq c_{\geq k} .
\]
Letting $k \to \infty$ we obtain 
\[
\lim \| \phi^{(n)} - \phi\|_{\ell^2 S} = 0.
\]
Finally, this property also implies uniform convergence in $C(0,1;L^2)$; this in turn allows 
us to pass to the limit in the Benjamin-Ono equation, and prove that the limit $\phi$ 
solves the Benjamin-Ono equation in the sense of distributions.

Thus, for each initial data $\phi_0 \in L^2$ we have obtained a weak solution $\phi \in \ell^2 S$,
as the  limit of the solutions with regularized data. Further, this solution satisfies 
the frequency envelope bound \eqref{fe-l2}.

Now we consider the dependence of these weak solutions on the initial data. First of all, the $\ell^2 S$ 
convergence allows us to pass to the limit in \eqref{lip}, therefore \eqref{lip} extends to these weak 
solutions. Finally, we show that these weak solutions depend continuously on the initial data in $L^2$.
To see that, we consider a sequence of data $\phi^{(n)}(0)$ satisfying \eqref{small+} uniformly, 
so that 
\[
\phi^{(n)}(0) \to \phi_0 \qquad \text{ in } L^2 .
\]
Then by the weak Lipschitz dependence we have
\[
\phi^{(n)} \to \phi \text{ in } S^{-\frac12}.
\]

Hence for the corresponding solutions we estimate
\[
\phi^{(n)} - \phi = P_{<k} (\phi^{(n)} -\phi) + P_{\geq k}   \phi^{(n)} - P_{\geq k} \phi  .
\]
Here the first term on the right converges to zero in $\ell^2 S$ as $n \to \infty$ 
by the weak Lipschitz dependence \eqref{small+}, and the last term converges to zero 
as $k \to \infty$ by the frequency envelope bound \eqref{fe-l2}. Hence letting in order 
first $n \to \infty$ then $k \to \infty$ we have
\[
\limsup_{n \to \infty} \| \phi^{(n)} - \phi\|_{\ell^2 S} \leq \|  P_{\geq k} \phi  \|_{\ell^2 S}+
\limsup_{n \to \infty}  \|  P_{\geq k} \phi^{(n)}  \|_{\ell^2 S}
\]
and then
\[
\limsup_{n \to \infty} \| \phi^{(n)} - \phi\|_{\ell^2 S} \leq 
\lim_{k \to \infty} \limsup_{n \to \infty}  \|  P_{\geq k} \phi^{(n)}  \|_{\ell^2 S}.
\]
It remains to show that this last right hand side vanishes. For this we 
use the  frequency envelope bound \eqref{fe-l2} applied to $\phi^{(n)}$ as follows.

Given $\delta > 0$, we have 
\[
\| \phi^{(n)}(0) - \phi_0\|_{L^2} \leq \delta, \qquad n \geq n_\delta.
\]
Suppose $\epsilon c_k$ is an $L^2$ frequency envelope for $\phi_0$,
and $\delta d_k$ is an $L^2$ frequency envelope for $ \phi^{(n)}(0) -
\phi_0$. Here  $d_k$ is a normalized frequency envelope,
which however may depend on $n$. 
Then $\epsilon c_k+ \delta d_k$ is an $L^2$ frequency
envelope for $ \phi^{(n)}(0)$.  Hence by \eqref{fe-l2} we obtain for
$n \geq n_\delta$
\[
\| P_{\geq k} \phi^{(n)}\|_{\ell^2 S} \lesssim \epsilon c_{\leq k} + \delta d_{\leq k} \lesssim \epsilon c_{\leq k} +\delta.
\]
Thus
\[
 \limsup_{n \to \infty}  \|  P_{\geq k} \phi^{(n)}  \|_{\ell^2 S} \lesssim \epsilon c_{\leq k} +\delta,
\]
and letting $k \to \infty$
we have 
\[
\lim_{k \to \infty} \limsup_{n \to \infty}  \|  P_{\geq k} \phi^{(n)}  \|_{\ell^2 S} \lesssim \delta.
\]
But $\delta > 0 $ was arbitrary. Hence
\[
\lim_{k \to \infty} \limsup_{n \to \infty}  \|  P_{\geq k} \phi^{(n)}  \|_{\ell^2 S} = 0,
\]
and the proof of the theorem is concluded.

\section{The scaling conservation law}

As discussed in the previous section, for the linear equation \eqref{bo-lin} with localized data
we can measure the initial data localization with an $x$ weight, and then propagate
this information along the flow using  the following relation:
\[
\| x \psi(0)\|_{L^2} = \| L\psi (t)\|_{L^2} = \| (x - 2tH \partial_x)\psi(t)\|_{L^2}^2.
\]
The question we ask here is whether there is a nonlinear counterpart to that.
To understand this issue we expand
\[
 \| (x - 2tH \partial_x)\phi(t)\|_{L^2}^2 = \int x^2 \phi^2 - 4xt \phi H\phi_x + 4 t^2 \phi_x^2\,  dx \,dt,
\]
where we recognize the linear mass,  momentum and energy densities.

To define the nonlinear counterpart of this we introduce the nonlinear mass, momentum 
and energy densities as 
\[
\begin{split}
m = &\phi^2,
\\
p = & \  \phi H\phi_x  - \frac13 \phi^3 ,
\\
e = &  \phi_x^2 - \frac34 \phi^2 H \phi_x + \frac18 \phi^4 .
\end{split}
\]
Then we set 
\[
G(\phi) = \int  x^2 m - 4xt p + 4 t^2 e \, dx .
\]
For this we claim that the following holds:

\begin{proposition}
\label{p:energy}
Let $\phi$ be a solution to the Benjamin-Ono equation for which 
the initial data satisfies $\phi_0 \in H^2$, $x\phi_0 \in L^2$. Then 

a) $L \phi \in C_{loc} (\R; L^2(\R))$.

b) The expression $G(\phi)$ is conserved along the flow.

c)  We have the representation
\begin{equation}
G(\phi) = \| \L \phi\|_{L^2}^2
\end{equation}
where
\begin{equation}
\L \phi = x\phi-  2t ( H \phi_x - \frac18(3\phi^2 - (H\phi)^2).
\end{equation}
\end{proposition}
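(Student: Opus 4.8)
The plan is to prove the three assertions in the order (a), (c), (b). Part (a) supplies the qualitative control without which the weighted integrals in (b) and (c) are not even well defined; part (c) is an algebraic identity; and part (b), the conservation, is where the normal-form mechanism is used. For (a) I would first reduce, by approximating the data with Schwartz functions and invoking the $L^2$ well-posedness already available, to producing for Schwartz data an a priori bound on $\|L\phi(t)\|_{L^2}$ on each compact time interval, depending only on $\|\phi_0\|_{H^2}+\|x\phi_0\|_{L^2}$. Since $L=x-2tH\partial_x$ is the push-forward of multiplication by $x$ along the linear flow, it commutes with $\partial_t+H\partial_x^2$; combining this with the equation and the commutator identity $[H,x]f = c\int f\,dx$ (a constant function, which annihilates the terms at hand since $\int\partial_x^2\phi\,dx=0$), one gets
\[
(\partial_t+H\partial_x^2)(L\phi)=\tfrac12\,x\,\partial_x(\phi^2)-t\,H\partial_x^2(\phi^2).
\]
Pairing with $L\phi$ in $L^2$ and integrating by parts, every term is bounded by $C(\|\phi\|_{C(0,t;H^2)})(1+\|L\phi\|_{L^2}^2)$; the one dangerous contribution, from $t\,H\partial_x^2(\phi^2)$, is handled by transferring one derivative onto the weight $x$ and using the global $H^2$ bound for $\phi$. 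Gronwall's lemma then closes the estimate on every compact interval, and time continuity is read off the equation.

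For (c) I would write $\L\phi=L\phi+\tfrac{t}{4}\big(3\phi^2-(H\phi)^2\big)$, expand $\|\L\phi\|_{L^2}^2$, sort the result by its weight in $(x,t)$ and its homogeneity degree in $\phi$, and compare with $G(\phi)=\int x^2 m-4xt\,p+4t^2 e\,dx$. The quadratic terms match immediately (using $\int(H\phi_x)^2=\int\phi_x^2$), and every remaining discrepancy collapses to the identities
\[
\int (H\phi)^2H\phi_x\,dx=0,\qquad \int\big(\phi^3-3\phi(H\phi)^2\big)\,dx=\int x\big(\phi^3-3\phi(H\phi)^2\big)\,dx=0,
\]
\[
\int\big(\phi^4+(H\phi)^4-6\phi^2(H\phi)^2\big)\,dx=0.
\]
The first is the perfect derivative $\tfrac13\partial_x\big((H\phi)^3\big)$. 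For the others I set $u=P_+\phi$, so $\phi=u+\bar u$ and $H\phi=-i(u-\bar u)$; a short computation gives $\phi^3-3\phi(H\phi)^2=4(u^3+\bar u^3)$ and $\phi^4+(H\phi)^4-6\phi^2(H\phi)^2=8(u^4+\bar u^4)$, and since for $k\ge 3$ the Fourier transform of $u^k$ is supported in $(0,\infty)$ and vanishes to second order at the origin — a product of purely positive-frequency factors cannot reach the zero frequency without a quantitative gain — we have $\int u^k\,dx=\int x\,u^k\,dx=0$, which gives all four identities.

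For (b) I would differentiate $G$ and use the local conservation laws $\partial_t m=-\partial_x\mathcal F_m$, $\partial_t p=-\partial_x\mathcal F_p$, $\partial_t e=-\partial_x\mathcal F_e$, with fluxes computed directly from the equation (exactly as in the standard derivation of $E_0,E_1,E_2$). Integrating by parts — the $t^2$-weighted term drops, having no surviving weight — one obtains
\[
\frac{d}{dt}G(\phi)=2\int x\big(\mathcal F_m-2p\big)\,dx-4t\int\big(\mathcal F_p-2e\big)\,dx,
\]
so it remains to check $\int x(\mathcal F_m-2p)\,dx=0$ and $\int(\mathcal F_p-2e)\,dx=0$. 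A direct computation gives $\mathcal F_m-2p=-\partial_x^{-1}H\big(\phi_x^2-(H\phi_x)^2\big)$, whose Fourier transform vanishes to second order at the origin (again a frequency-imbalanced product contributes nothing at low frequency), so the $x$-weighted integral vanishes; this is the precise nonlinear analogue of the scaling identity $\|x\psi\|_{L^2}^2=\|L\psi(t)\|_{L^2}^2$ for the linear flow. For the second identity one computes $\mathcal F_p$ explicitly, reducing products with the Cotlar-type identity $H(fg)=(Hf)g+f(Hg)+H\big((Hf)(Hg)\big)$, and integrates; the cubic and quartic identities of part (c) are exactly what force $\int(\mathcal F_p-2e)\,dx=0$. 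Equivalently, one may package this as: $\L\phi$ is real and solves an inhomogeneous Schr\"odinger-type equation whose forcing pairs to zero against $\L\phi$ in $L^2$ by those same identities, so $\|\L\phi\|_{L^2}^2$ is constant — which is the sense in which $G$ is a normal form of the linear conserved quantity $\|L\psi\|_{L^2}^2$.

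The main obstacle is identifying the quadratic correction $\tfrac{t}{4}\big(3\phi^2-(H\phi)^2\big)$: it is forced by the requirement that the anomalous cubic terms produced by $L$ acting on the quadratic nonlinearity cancel, and this is the genuinely new ingredient. The remaining difficulty — showing that with this correction $G$ is \emph{exactly}, not merely almost, conserved — rests on the cubic identity $\int\phi^3=3\int\phi(H\phi)^2$ together with its weighted and quartic companions, which trace back to the positive-frequency support structure. Part (a) is routine energy estimates but must be carried out first so that every integral and integration by parts in (b) and (c) is legitimate, and part (c) is bookkeeping once those identities are in hand.
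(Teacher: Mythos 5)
Your outline is, in substance, the paper's own proof: part (c) by expanding $\|\L\phi\|_{L^2}^2$ and reducing to the positive-frequency moment identities (your explicit formulas $\phi^3-3\phi(H\phi)^2=4(u^3+\bar u^3)$ and $\phi^4+(H\phi)^4-6\phi^2(H\phi)^2=8(u^4+\bar u^4)$, with $\int u^k\,dx=\int x\,u^k\,dx=0$ for $k\ge 3$ via the support and order of vanishing of $\widehat{u^k}$ at $\xi=0$, is exactly the paper's argument), and part (b) by differentiating $G$ against the local conservation laws; your flux identity $\frac{d}{dt}G=2\int x(\mathcal{F}_m-2p)\,dx-4t\int(\mathcal{F}_p-2e)\,dx$ is the same computation the paper writes as $\int x^2(m_t+2p_x)-4xt(p_t+2e_x)\,dx$, and $\mathcal{F}_m-2p=-\partial_x^{-1}(2\phi_xH\phi_x)$ matches $m_t+2p_x=2\phi_xH\phi_x$.

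Two points need attention. First, in part (a) you run Gronwall directly on $\|L\phi(t)\|_{L^2}$ (equivalently $\|x\phi(t)\|_{L^2}$) for Schwartz data, but that differential inequality is vacuous unless you already know the quantity is finite and absolutely continuous in time, and for Benjamin--Ono persistence of spatial decay is exactly the issue at stake (it is even known to fail for higher-order weights, so ``Schwartz data'' does not settle it by itself). The paper closes this by replacing $x$ with a bounded truncated weight $x_R$, proving the bound \emph{uniformly in $R$} via the Coifman--Meyer commutator estimates for $[x_R,H]\partial_x$ and $\partial_x[x_R,H]$, and then letting $R\to\infty$; alternatively you could invoke the weighted well-posedness theory of Fonseca--Ponce, but some such device must be inserted before your Gronwall step is legitimate. (Your formula $(\partial_t+H\partial_x^2)L\phi=\tfrac12 x\partial_x(\phi^2)-tH\partial_x^2(\phi^2)$ is correct, and once the truncation is in place your estimate does close.) Second, your claim that the cubic and quartic ``null'' identities of part (c) are what force $\int(\mathcal{F}_p-2e)\,dx=0$ is not borne out by the computation: writing out $p_t+2e_x$ as in the paper, the vanishing of $\int x(p_t+2e_x)\,dx$ uses only integration by parts, the antisymmetry of $H$, $H^2=-I$, and the fact that $x$ can be commuted under $H$ against mean-zero integrands; the positive-frequency identities enter only in part (c). This is not a fatal error — the integral does vanish — but as written it is an unverified assertion about a computation you have not carried out, and organizing it through the Cotlar identity will produce the cancellation by elementary symmetries rather than by the frequency-support argument.
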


Here one can view the expression $\L \phi$ as a normal form correction to $L \phi$.
While such a correction is perhaps expected to exist, what is remarkable is that it is both nonsingular 
and exactly conserved.

\begin{proof}
a) We first show that the solution $\phi$ satisfies
\begin{equation}\label{xphi}
\| x \phi(t)\|_{L^2} \lesssim_{\phi_0} \langle t \rangle .
\end{equation}
For this we truncate the weight to $x_R$, which is chosen 
to be a smooth function which equals $x$ for $|x| < R/2$ and $R$ for $|x| > R$.
Then we establish the uniform bound
\begin{equation}\label{xrphi}
\frac{d}{dt} \| x_R \phi\|_{L^2}^2 \lesssim_{\phi_0}   1 + \|x_R \phi\|_{L^2}.
\end{equation}
Indeed, we have
\[
\begin{split}
\frac{d}{dt}  \| x_R \phi\|_{L^2}^2 = & \ \int_\R  x_R^2 \phi (-H \partial_x^2 \phi +  \phi \phi_x) \, dx
\\ 
= & \ \ \int_\R  x_R^2 \phi_x  H  \phi_x \, dx  +  \int_\R  2 x_R x'_R (\phi H \phi_x - \frac13  \phi^3)  \, dx
\\
= & \ \ \int_\R  x_R \phi_x  [x_R,H]  \phi_x \, dx  +  \int_\R  2 x_R x'_R (\phi H \phi_x - \frac13  \phi^3)\,  dx
\\
= & \ \ \int_\R  - x'_R \phi  [x_R,H]  \phi_x  - x_R \phi \partial_x [x_R,H]  \phi_x \, dx  +  \int_\R  2 x_R x'_R (\phi H \phi_x - \frac13  \phi^3) \, dx.
\end{split}
\]
Then it suffices to establish the commutator bounds
\[
\| [x_R, H] \partial_x\|_{L^2 \to L^2} \lesssim 1, \qquad  \| \partial_x [x_R, H] \|_{L^2 \to L^2} .
\lesssim 1
\]
But these  are both standard Coifman-Meyer estimates,
which require only $x_R' \in BMO$.

Combining \eqref{xphi} with the uniform $H^1$ bound, we obtain 
\[
\| L \phi\|_{L^2} \lesssim_{\phi_0} \langle t \rangle.
\]
To establish the continuity in time of $L\phi$, we write the evolution equation
\[
( \partial_t + H \partial_x^2 ) L \phi = L \phi \phi_x + H \phi_x \phi_x ,
\]
and observe that this equation is strongly well-posed in $L^2$.

b) Integrating by parts we write
\[
\frac{d}{dt} G(\phi) = \int_\R  x^2 (m_t + 2p_x) - 4xt (p_t +2e_x) \, dx .
\]
It remains to show that the two terms above vanish. For the first we compute
\[
\begin{split}
m_t + 2p_x = & \ - 2 \phi H \phi_{xx} + 2 \phi^2 \phi_x + 2(\phi H \phi_x)_x -    2 \phi^2 \phi_x = 2 \phi_x H \phi_x.
\end{split}
\]
Integrating, we can commute in the $x$ to get
\[
\int x^2 (m_t + 2p_x) \, dx = 2  \int x^2 \phi_x H \phi_x \, dx = \int x \phi_x H(x \phi_x) \, dx= 0
\]
using the antisymmetry of $H$.

For the second term we write
\[
\begin{split}
p_t + 2 e_x = & \ - H \phi_{xx} H \phi_x + \phi \phi_{xxx} + \phi \phi_x H\phi_x + \phi H(\phi \phi_x)_x 
+ \phi^2 H \phi_{xx} - \phi^3 \phi_x 
\\ & \ + 4 \phi_x \phi_{xx} - 3 \phi \phi_x H\phi_x - \frac32 \phi^2 H \phi_{xx} + \phi^3 \phi_x
\\ =  & \  \partial_x(- \frac12 (H \phi_x)^2 + \frac32 \phi_x^2 + \phi \phi_{xx})  + \partial_x ( \phi H(\phi \phi_x) - \frac12 \phi^2 H(\phi_x)) 
\\
& \  - \phi_x H(\phi \phi_x) - \phi \phi_x H \phi_x .
\end{split}
\]
Integrating by parts we have 
\[
\begin{split}
\int x(p_t + 2 e_x) \, dx = & \ - \int - \frac12 (H \phi_x)^2 + \frac32 \phi_x^2 + \phi \phi_{xx} + \phi H(\phi \phi_x) - \frac12 
\phi^2 H(\phi_x) \, dx \\ & \ - \int x( \phi_x H(\phi \phi_x) + \phi \phi_x H \phi_x)\, dx.
\end{split}
\]
To get zero in the first integral we integrate by parts and use the antisymmetry of $H$ together with $H^2 =- I$.
In the second integral we can freely commute $x$ under one $H$ and then use the antisymmetry of $H$.

\bigskip

c) We compute the expression 
\[
Err(\phi) = G(\phi) - \int_\R (x\phi-  2t ( H \phi_x - \frac18(3\phi^2 - (H\phi)^2))^2\, dx.
\]
The quadratic terms easily cancel, so we are first left with an $xt$ term,
\[
Err_1(\phi) = \int - 4xt (- \frac13 \phi^3  +  \frac18 \phi( 3\phi^2 - (H\phi)^2) \,dx.
\]
For this to cancel we need 
\[
\int x \phi^3 \,dx =  3 \int x \phi (H\phi)^2\, dx.
\]
Splitting into positive and negative 
frequencies 
\[
\phi = \phi^+ + \phi^-. \qquad H \phi = \frac1i (\phi^+ - \phi^-),
\]
the cross terms cancel and we are left with having to prove that
\[
\int x (\phi^+)^3\, dx =  \int x (\phi^-)^3 \, dx = 0.
\]
where $\phi^- = \overline{\phi^+}$. By density it suffices to establish this for Schwartz functions $\phi$.
Then the Fourier transform of $\phi^+$ is supported in $\R^+$, and is smooth except for a jump at frequency $0$.
It follows that the Fourier transform of $(\phi^+)^3$ is also supported in $\R^+$ but of class $C^{1,1}$ at zero,
i.e. with a second derivative jump. Hence  the Fourier transform of $(\phi^+)^3$  vanishes at zero and the 
conclusion follows.

Secondly, we are left with a $t^2$ term, namely
\[
Err_2(\phi) = \int 4t^2 ( - \frac34 \phi^2 H \phi_x + \frac14( 3\phi^2 - (H\phi)^2) H \phi_x )  
+ 4 t^2(  \frac18 \phi^4 - \frac{1}{64} (3 \phi^2 - (H\phi)^2)^2 ) \, dx .
\]

The first term cancels since we can integrate out the triple $H\phi$ term. For the second we compute
\[
8 \phi^4 - (3 \phi^2 - (H\phi)^2)^2 = - \phi^4 + 6 \phi^2 (H \phi)^2 - (H\phi)^4 = - 2 (\phi^-)^4 - 2(\phi^+)^4,
\]  
which again suffices, by the same argument as in the first case.
\end{proof}

 We further show that this bound naturally extends to $L^2$ solutions:

\begin{proposition}
Let $\phi$ be a solution to the Benjamin-Ono equation whose initial data satisfies
$\phi_0 \in L^2$, $x \phi_0 \in L^2$. Then $\phi$ satisfies the bounds
\begin{equation}\label{bd1}
\| L \phi\|_{L^2} \lesssim_{\phi_0} \langle t \rangle ,
\end{equation}
\begin{equation}\label{bd2}
\| \phi\|_{L^\infty} \lesssim_{\phi_0} t^{-\frac12} \langle t^\frac12 \rangle .
\end{equation}
Furthermore $\L \phi \in C(\R; L^2)$ and has conserved $L^2$ norm. 

\end{proposition}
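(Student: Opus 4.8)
The plan is to obtain both pointwise bounds and the conservation law by approximation, transporting the regular-data statement of Proposition~\ref{p:energy} to $L^2$ solutions with the help of the $L^2$ well-posedness theory of Theorem~\ref{thm:lwp}. The only genuinely delicate point is that the plain weighted quantity $L\phi = x\phi - 2tH\partial_x\phi$ loses a derivative in its evolution, so the conserved mass of the normal form $\L\phi$ must be propagated to $L^2$ solutions through $\L\phi$ itself and not through $L\phi$.

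\emph{Regularization and uniform a priori bounds.} Take the same regularized data $\phi_0^{(n)}=P_{<n}\phi_0$ as in the proof of Theorem~\ref{thm:lwp}. Since $[x,P_{<n}]$ is an $L^2$-bounded Fourier multiplier of norm $O(2^{-n})$, we have $\phi_0^{(n)}\in H^\infty$, $x\phi_0^{(n)}\in L^2$, and $\phi_0^{(n)}\to\phi_0$, $x\phi_0^{(n)}\to x\phi_0$ in $L^2$, with $\|\phi_0^{(n)}\|_{L^2}+\|x\phi_0^{(n)}\|_{L^2}\lesssim_{\phi_0}1$. Let $\phi^{(n)}$ be the corresponding global $H^3$ solutions. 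Proposition~\ref{p:energy} applies to each, so $\L\phi^{(n)}\in C_{loc}(\R;L^2)$ and, for all $t$,
\[
\|\L\phi^{(n)}(t)\|_{L^2}^2=G(\phi^{(n)})=\|x\phi_0^{(n)}\|_{L^2}^2,\qquad \|\phi^{(n)}(t)\|_{L^2}=\|\phi_0^{(n)}\|_{L^2}.
\]
From the identity $\L\phi=L\phi+\tfrac{t}{4}(3\phi^2-(H\phi)^2)$ we get $\|L\phi^{(n)}(t)\|_{L^2}\lesssim \|x\phi_0^{(n)}\|_{L^2}+t\,A^{(n)}(t)\,\|\phi_0^{(n)}\|_{L^2}$, where $A^{(n)}(t):=\|\phi^{(n)}(t)\|_{L^\infty}+\|H\phi^{(n)}(t)\|_{L^\infty}$. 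Now I invoke the pointwise bound \eqref{prima}: although stated for linear solutions, its proof is a fixed-time argument, and the only place the linear flow is used — to reduce $c^2\lesssim\|\psi\|_{L^2}\|L\psi\|_{L^2}$ to its trivial form at $t=0$ — survives for an arbitrary function $\psi$ with $L\psi\in L^2$ by pulling back along the linear flow, under which $\|\psi\|_{L^2}$ and $\int\psi$ are unchanged and $L\psi\mapsto x\widetilde\psi$. Applied to $\psi=\phi^{(n)}(t)$ it gives $A^{(n)}(t)\lesssim t^{-\frac12}\|\phi_0^{(n)}\|_{L^2}^{\frac12}\|L\phi^{(n)}(t)\|_{L^2}^{\frac12}$, and combining the two bounds,
\[
A^{(n)}(t)\lesssim_{\phi_0} t^{-\frac12}+A^{(n)}(t)^{\frac12}.
\]
Since $\phi^{(n)}$ is smooth, $A^{(n)}(t)<\infty$, so this self-improving inequality forces $A^{(n)}(t)\lesssim_{\phi_0}t^{-\frac12}\langle t^{\frac12}\rangle$ and then $\|L\phi^{(n)}(t)\|_{L^2}\lesssim_{\phi_0}\langle t\rangle$, with all constants independent of $n$.

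\emph{Passage to the limit.} By the construction in the proof of Theorem~\ref{thm:lwp}, carried out on any compact time interval, $\phi^{(n)}\to\phi$ in $C_{loc}(\R;L^2)$ and in $\ell^2 S$ on compact intervals. Fix $t$. Then $\phi^{(n)}(t)\to\phi(t)$ in $L^2$, so $L\phi^{(n)}(t)=x\phi^{(n)}(t)-2tH\partial_x\phi^{(n)}(t)\to L\phi(t)$ in $\mathcal S'$, and, since $\phi^{(n)}(t)\to\phi(t)$ in $L^2$ forces the quadratic part of $\L\phi^{(n)}(t)$ to converge in $L^1$, also $\L\phi^{(n)}(t)\to\L\phi(t)$ in $\mathcal S'$. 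Both sequences are bounded in $L^2$ uniformly in $n$ (by the previous step and the conservation identity), so the limits lie in $L^2$ and the convergence is weak in $L^2$. Weak lower semicontinuity yields \eqref{bd1}, $\|L\phi(t)\|_{L^2}\le\liminf_n\|L\phi^{(n)}(t)\|_{L^2}\lesssim_{\phi_0}\langle t\rangle$, and likewise $\|\L\phi(t)\|_{L^2}\le\lim_n\|x\phi_0^{(n)}\|_{L^2}=\|x\phi_0\|_{L^2}$. Feeding \eqref{bd1} back into \eqref{prima}, now applied to $\phi(t)$ itself (legitimate since $L\phi(t)\in L^2$), gives \eqref{bd2},
\[
\|\phi(t)\|_{L^\infty}+\|H\phi(t)\|_{L^\infty}\lesssim t^{-\frac12}\|\phi_0\|_{L^2}^{\frac12}\|L\phi(t)\|_{L^2}^{\frac12}\lesssim_{\phi_0} t^{-\frac12}\langle t^{\frac12}\rangle .
\]
In particular $\L\phi(t)=L\phi(t)-\tfrac{t}{4}(3\phi(t)^2-(H\phi(t))^2)\in L^2$ for every $t$, with $\L\phi(0)=x\phi_0$.

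\emph{Conservation and continuity of $\L\phi$; the main obstacle.} It remains to improve $\|\L\phi(t)\|_{L^2}\le\|x\phi_0\|_{L^2}$ to an equality and to prove $\L\phi\in C(\R;L^2)$ — the one step where weak convergence is insufficient, and precisely where the normal-form character of $\L$ is essential. Unlike $L\phi$, the quantity $\L\phi$ satisfies a Schr\"odinger-type evolution equation — the integrated form of the cancellations in parts (b) and (c) of Proposition~\ref{p:energy} — whose right-hand side is multilinear in $\phi$, $H\phi$ and $\L\phi$ with \emph{no} derivative loss and with coefficients controlled by the Strichartz and bilinear $L^2$ norms of $\phi$ supplied by Theorem~\ref{apriori}. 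Hence this equation, and the corresponding difference equation for $\L\phi^{(n)}-\L\phi^{(m)}$, is strongly $L^2$-well-posed once $\phi$ is frozen, so $\{\L\phi^{(n)}\}$ is Cauchy in $C_{loc}(\R;L^2)$ with limit $\L\phi$; passing the conserved value $\|x\phi_0^{(n)}\|_{L^2}\to\|x\phi_0\|_{L^2}$ and the continuity of each $\L\phi^{(n)}$ through this strong limit then gives $\L\phi\in C(\R;L^2)$ with $\|\L\phi(t)\|_{L^2}=\|x\phi_0\|_{L^2}$ for all $t$. Alternatively one may derive the equality from the two-sided bound $\|\L\phi(t)\|_{L^2}\le\|x\phi_0\|_{L^2}$ for $t\gtrless 0$, which follows by reversibility, together with the conservation already known on the dense class of $H^2$, weighted data. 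I expect this last step — making the $L^2$-level well-posedness of the $\L\phi$ evolution precise — to be the principal technical point.
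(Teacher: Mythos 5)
Your treatment of \eqref{bd1} and \eqref{bd2} is essentially the paper's argument: the paper proves these bounds for $H^2$ solutions by writing $L\phi = \L\phi - \frac{t}4(3\phi^2-(H\phi)^2)$, invoking the conservation of $\|\L\phi\|_{L^2}$ from Proposition~\ref{p:energy}, and closing a self-improving inequality via the fixed-time bound \eqref{prima} (which, as you correctly observe, applies to an arbitrary function with $\psi, L\psi\in L^2$ by pulling back along the linear flow), and then transfers them to $L^2$ solutions by density; your version runs the same estimates on the regularized solutions and passes to the limit by weak lower semicontinuity, which is fine and if anything more explicit. The same limiting argument also gives the one-sided bound $\|\L\phi(t)\|_{L^2}\le\|x\phi_0\|_{L^2}$, as in the paper.

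The genuine gap is in your primary mechanism for the exact conservation and the $C(\R;L^2)$ continuity. You assert that $\L\phi$ solves a Schr\"odinger-type equation whose right-hand side is multilinear in $\phi$, $H\phi$, $\L\phi$ with no derivative loss and with coefficients controlled by the Strichartz/bilinear norms of Theorem~\ref{apriori}; this is not established and is not what the algebra produces. Starting from $(\partial_t+H\partial_x^2)L\phi = x\phi\phi_x - 2tH\partial_x(\phi\phi_x)$ and differentiating the quadratic correction $\frac{t}4(3\phi^2-(H\phi)^2)$, then eliminating the weight via $x\phi=\L\phi+2tH\phi_x-\frac{t}4(3\phi^2-(H\phi)^2)$, one is left with a transport-type term $\phi_x\,\L\phi$ together with quadratic terms carrying two derivatives in total (schematically $t\,\phi\,\partial_x^2\phi$ and $t\,(\partial_x\phi)^2$ with various Hilbert transforms); these are precisely the terms that lose a derivative at $L^2$ regularity, so neither the strong $L^2$ well-posedness of the frozen-coefficient $\L\phi$ equation nor the Cauchy property of $\L\phi^{(n)}$ in $C_{loc}(\R;L^2)$ follows from Theorem~\ref{apriori}. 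The conservation of $\|\L\phi\|_{L^2}$ is an integral cancellation (the density--flux computation for $G$ in Proposition~\ref{p:energy}), not the reflection of a derivative-free evolution equation. The correct repair is the alternative you mention only in passing, and it is the paper's actual argument: repeat the weak-limit argument with initialization at an arbitrary time $t$, approximating $\phi(t)$ (not $\phi_0$) by regular data --- which is legitimate because \eqref{bd1} and \eqref{bd2} give $L\phi(t)\in L^2$ and $\phi(t)\in L^2\cap L^\infty$, so e.g. $P_{<m}\phi(t)$ converges together with its $\L$-quantity --- to obtain the reverse inequality and hence constancy of $\|\L\phi(t)\|_{L^2}$; the strong continuity then follows from weak $L^2$ continuity of $\L\phi$ (a consequence of the strong $L^2$ continuity of $\phi$ and the uniform bound) combined with constancy of the norm, a standard Hilbert-space step that your alternative leaves unaddressed.
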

We remark that both bounds \eqref{bd1} and \eqref{bd2} are sharp, as they 
must apply to solitons.

\begin{proof}
Since the solution to data map is continuous in $L^2$, it suffices 
to prove \eqref{bd1} and \eqref{bd2} for $H^2$ solutions. Then we
a-priori know that $L \phi \in L^2$ and $\phi \in L^\infty$, and we 
can take advantage of the $\|\L \phi\|_{L^2}$ conservation law.
Hence we can use \eqref{prima} to estimate
\[
\| L \phi \|_{L^2} \lesssim \| \L \phi \|_{L^2} + t \| \phi \|_{L^\infty} \|\phi\|_{L^2}
\lesssim  \| \L \phi \|_{L^2} + t^\frac12 \| L \phi \|_{L^2}^\frac12 \|\phi\|_{L^2}^\frac32 ,
\]
which by Cauchy-Schwarz inequality yields 
\[
 \| L \phi \|_{L^2} \lesssim \| \L \phi \|_{L^2} + t \| \phi\|_{L^2}^3.
\]
Now the pointwise bound bound for $\phi$ follows by reapplying \eqref{prima}.

For the last part, we first approximate the initial data $\phi_0$ with $H^2$ data $\phi_{0}^n$ so that 
\[
\|\phi_0^ n - \phi_0\|_{L^2} \to 0, \qquad  \|x(\phi_0^n - \phi_0)\|_{L^2} \to 0.
\]
Then we have $\|\L \phi^n\|_{L^2} \to \| \L \phi(0)\|_{L^2}$. Since $\phi^n \to \phi_0$ in $L^2_{loc}$,
taking weak limits, we obtain
\[
\| \L \phi\|_{L^\infty L^2} = \| \L \phi(0)\|_{L^2}.
\]
Repeating the argument but with initialization at a different time $t$ we similarly obtain
\[
\| \L \phi\|_{L^\infty_t L^2_x} = \| \L \phi(t)\|_{L^2_x}.
\]
Hence $\| \L\phi\|_{L^2}$ is constant in time. Then, the $L^2$ continuity follows from the 
corresponding weak continuity, which in turn follows from the strong $L^2$ continuity of $\phi$.
\end{proof}

\section{The uniform pointwise decay bound}

In this section we establish our main pointwise decay bound for $\phi$,
namely
\begin{equation}\label{want}
\| \phi(t)\|_{L^\infty} + \|H \phi(t)\|_{L^\infty}  \leq C \epsilon \langle t \rangle^{-\frac12}, \qquad |t| \leq e^{\frac{c}{\epsilon}}
\end{equation}
with a large universal constant $C$ and a small universal constant $c$, to be chosen later.
 
Since the Benjamin-Ono equation is well-posed in $L^2$, with
continuous dependence on the initial data, by density it suffices to
prove our assertion under the additional assumption that $\phi_0 \in
H^2$. This guarantees that the norm $\| u(t)\|_{L^\infty}$ is
continuous as a function of time. Then it suffices to establish the
desired conclusion \eqref{want} in any time interval $[0,T]$ under the
additional bootstrap assumption
\begin{equation}\label{boot}
\| \phi(t)\|_{L^\infty} + \|H \phi(t)\|_{L^\infty}  \leq 2C \epsilon \langle t \rangle^{-\frac12}, \qquad |t| \leq T \leq  e^{\frac{c}{\epsilon}}.
\end{equation}

We will combine the above bootstrap assumption with the bounds arising from the following 
conservation laws:
\begin{align}
\label{use01}
\| \phi(t) \|_{L^2} \leq & \ \epsilon,
\\
\label{use02}
\| \L \phi(t) \|_{L^2} \leq & \ \epsilon,
\\
\label{use03}
\int_{-\infty}^\infty \phi dx = c, \qquad &|c| \leq  \epsilon .
\end{align}
We recall that $\L$ is given by
\[
\L \phi = x\phi-  2t \left[  H \phi_x - \frac18(3\phi^2 - (H\phi)^2)\right] .
\]
One difficulty here is that the quadratic term in $\L \phi$ cannot be treated
perturbatively. However, as it turns out, we can take advantage of its structure
in a simple fashion.

As a preliminary step, we establish a bound on the function 
\[
\partial^{-1} \phi (x) := \int_{-\infty}^x \phi(y)\, dy 
\]
as follows:
\begin{equation}\label{intphi}
| \partial^{-1} \phi(x)| \lesssim  C \epsilon + C^2 \epsilon^2 \log \langle t/x\rangle.
\end{equation}

Assume first that $x \leq -\sqrt{t}$. Then we write
\[
\phi = \frac{1}{x} \L(\phi) +  \frac{2t}x H \phi_x  - \frac{t}{4x}( 3 \phi^2 -(H\phi)^2)) .
\]
Integrating by parts, we have 
\[
\partial^{-1} \phi(x) = \frac{2t}x H\phi(x)  + \int_{-\infty}^x \frac{2t}{y^2} H\phi(y) +\frac{1}{x} \L(\phi)   
- \frac{t}{4y}(    3 \phi^2 -(H\phi)^2) \, dy.
\]
For the first two terms we have a straightforward $\dfrac{C\epsilon \sqrt{t}}{|x|}$ bound due to \eqref{boot}. 
For the third term we use \eqref{use02} and the Cauchy-Schwarz inequality.
For the last  integral term we use the $L^2$ bound \eqref{use01} for $x < -t$ and the $L^\infty$ bound \eqref{boot}
for $-t \leq x \leq -\sqrt{t}$ to get a bound of $C^2 \epsilon^2 \log \langle t/x\rangle$.

This gives the desired bound in the region $x \leq - \sqrt{t}$. A
similar argument yields the bound for $x \geq \sqrt{t}$, where in
addition we use the conservation law \eqref{use03} for $\displaystyle\int \phi \, dy$ to connect $\pm
\infty$. Finally, for the inner region $|x| \leq \sqrt{t}$ we use
directly the pointwise bound \eqref{boot} on $\phi$. This concludes the proof of \eqref{intphi}.

\bigskip

Now we return to the pointwise bounds on $\phi$  and $H \phi$. 
Without using any bound for $t$, we will establish the estimate
\begin{equation} \label{point-get}
\|\phi(t)\|_{L^\infty}^2 + \|H\phi(t)\|_{L^\infty}^2 \lesssim \epsilon^2 t^{-1}(1+ C + C^3 \epsilon \log t + C^4 \epsilon^2 \log^2 t).
\end{equation}
In order to retrieve the desired bound \eqref{want} we first choose $C \gg 1$ in order to account 
for the first two terms, and then restrict $t$ to the range $C \epsilon \log t \ll 1$ for the last two terms.
This determines the small constant $c$ in \eqref{want}.

To establish \eqref{point-get} we first use the expression for $\L (\phi)$ to compute
\[
\frac{d}{dx} (|\phi|^2 + |H\phi|^2) = \frac{1}t F_1 + \frac{1}t F_2 + \frac14 F_3,
\]
where 
\[
F_1 = \phi H \L(\phi) + H \phi \L(\phi), \qquad F_2 = x \phi  H \phi - \phi H(x\phi),
\]
\[ 
F_3 =  - \phi H( 3 \phi^2 - (H\phi)^2) + H \phi (  3 \phi^2 - (H\phi)^2).
\]
We will estimate separately the contributions of $F_1$, $F_2$ and $F_3$. For $F_1$ we combine \eqref{use01} and \eqref{use02}
to obtain 
\[
\| F_1 \|_{L^1} \lesssim \epsilon^2,
\]
which suffices. For $F_2$ we commute $x$ with $H$ to rewrite it as 
\[
F_2(x)  = \phi(x) \int_{-\infty}^\infty \phi(y)\, dy,
\]
which we can integrate using \eqref{intphi}.

Finally, for $F_3$ we use the identity 
\[
H ( \phi^2 - (H \phi)^2) = 2 \phi H\phi
\]
to rewrite it as
\[
F_3 = - \phi H(  \phi^2 + (H\phi)^2) - H \phi (   \phi^2 + (H\phi)^2).
\]
This now has a commutator structure, which allows us to write
\[
\int_{-\infty}^{x_0} F_3(x)\, dx =  - \int_{-\infty}^{x_0} \int_{x_0}^\infty \phi(x) \frac{1}{x-y} (  \phi^2 + (H\phi)^2)(y) \, dy\,  dx .
\] 
Here the key feature is that $x$ and $y$ are separated. We now estimate the last integral. We consider several cases:

\medskip

a) If $ |x -y| \lesssim \sqrt{t} $ then direct integration using \eqref{boot} yields a bound of $C^3 \epsilon^3 t^{-1}$.

\medskip

b) If $|x-y| > t$ then we use \eqref{use01} to  bound $\phi^2 +(H\phi)^2$ in $L^1$. Denoting $x_1 = \min\{x_0,y-t\}$,
we are left with an integral of the form
\[
\int_{-\infty}^{x_1} \frac{1}{x-y} \phi(x) \,dx = \frac{1}{ x_1 - y} \partial^{-1}\phi(x_1) - 
\int_{-\infty}^{x_1} \frac{1}{(x-y)^2} \partial^{-1}
\phi(x) \,dx .
\]
As $|x_1 - y| > t$  from \eqref{intphi} we obtain a bound of 
\[
t^{-1}( C\epsilon^{3} + C^2 \epsilon^4 \log t ).
\]

\medskip

c) $x-y \approx r \in [\sqrt{t},t]$. Then we use \eqref{boot} to bound $  \phi^2 + (H\phi)^2 $ in $L^\infty$ 
and argue as in case (b) to obtain a bound of
\[
t^{-1}( C^3\epsilon^{3} + C^4 \epsilon^4 \log t).
\]
Then the dyadic $r$ summation adds another $\log t$ factor.

\section{The elliptic region}

Here we improve the pointwise bound on $\phi$ in the elliptic region 
$x < - \sqrt{t}$. Precisely, we will show that for $t < e^{\frac{c}\epsilon}$ we have 
\begin{equation}\label{point-ell}
|\phi(x)| + |H\phi(x)| \lesssim \epsilon t^{-\frac14} x^{-\frac12}, \qquad x  \geq \sqrt{t}.
 \end{equation}
To prove this we take advantage of the ellipticity of the linear part $x -2t H \partial_x$ of the operator $\L$
in the region $x  \geq \sqrt{t}$. For this linear part we claim the bound
\begin{equation}\label{lin(L)}
\| x \chi \phi\|^2_{L^2} + \| t \chi \phi_x\|_{L^2}^2 \lesssim \| (x -2t H \partial_x) \phi\|_{L^2}^2+ t^{\frac32} \| \phi \|_{L^\infty}^2 
+ t^\frac12 \| \partial^{-1} \phi\|_{L^\infty}^2,
\end{equation}
where $\chi$ is a smooth cutoff function which selects the region $\{x > \sqrt{t} \}$. 

Assuming we have this, using also \eqref{want}, \eqref{use02} and \eqref{intphi} we obtain
\[
\| x \chi \phi\|^2_{L^2} + \| t \chi \phi_x\|_{L^2}^2 \lesssim \epsilon t^{\frac12} +  
t^2 \| \chi (\phi^2 + (H \phi)^2)\|_{L^2}^2 .
\]
We claim that we can dispense with the second term on the right. Indeed, we can easily use \eqref{want} to bound
the $\phi^2$ contribution by
\[
\| \chi \phi^2\|_{L^2} \lesssim \| \phi\|_{L^\infty} \| \chi \phi \|_{L^2} \lesssim \epsilon t^{-1} \|x \chi \phi\|_{L^2} .
\]
The $(H \phi)^2$ contribution is estimated in the same manner, but in addition we also need to bound the commutator
\begin{equation}\label{com(H)}
\| [H,\chi] \phi\|_{L^2} \lesssim \|\phi \|_{L^\infty} + t^{-\frac12} \|\partial^{-1} \phi \|_{L^\infty}.
\end{equation}
Assuming we also have this commutator bound, it follows that
\begin{equation} \label{ell}
\| x \chi\phi\|^2_{L^2} + \| t (\chi \phi)_x\|_{L^2}^2 \lesssim \epsilon t^{\frac12}.
\end{equation}
This directly yields the  desired pointwise bound \eqref{point-ell} for $\phi$. 

Now we prove the  $H \phi$  part of \eqref{point-ell}.  For $x \approx r > t^\frac12$ we decompose 
\[
\phi = \chi_r \phi + (1-\chi_r)\phi,
\]
where $\chi_r$ is a smooth bump function selecting this dyadic region.

For the contribution of the first term we use interpolation to write
\[
\| H(\chi_r \phi)\|_{L^\infty} \lesssim \| \chi_r \phi\|_{L^2}^\frac12 \| \partial_x(\chi_r \phi)\|_{L^2}^\frac12 \lesssim
\epsilon (t^{\frac14} r^{-1})^\frac12 (t^{-\frac34})^\frac12 = \epsilon t^{-\frac14} r^{-\frac12}.
\]
For the second term we use the kernel for the Hilbert transform, 
\[
 H[ (1-\chi_r)\phi] (x)  = \int \frac{1}{x-y}  [(1-\chi_r)\phi] (y)\, dy .
\]
For the contribution of the region $y > t^\frac12$ we use the pointwise bound \eqref{point-ell} on $\phi$ and directly integrate.
For the contribution of the region $y  < t^\frac12$ we integrate by parts and use the bound \eqref{intphi} on $\partial^{-1} \phi$.
This concludes the proof of the $H\phi$ bound in \eqref{point-ell}.

It remains to prove the bounds \eqref{lin(L)} and \eqref{com(H)}. Both are scale invariant in time, so without any restriction in
generality we can assume that $t = 1$.
\bigskip

{\em Proof of \eqref{com(H)}.}
The kernel $K(x,y)$ of $[\chi,H]$ is given by 
\[
K(x,y) = \frac{\chi(x) -\chi(y)}{x-y},
\]
and thus satisfies
\[
(1+|x|+|y|)|K(x,y)| +  (1+|x|+|y|)^2|\nabla_{x,y}K(x,y)| \lesssim 1
\]
Then we write
\[
\int_\R K(x,y) \phi(y) dy = - \int_\R  K_y(x,y) \partial^{-1} \phi(y) dy
\]
and then take absolute values and estimate.

\bigskip

{\em Proof of \eqref{lin(L)}.}
We multiply $(x - 2H \partial_x)  \phi$ by $\chi := \chi_{\geq 1}(x)$, square and integrate. have 
\[
\|\chi (x - 2H \partial_x) \phi\|_{L^2}^2 - \| \chi x \phi\|_{L^2}^2 - 2 \| \chi |x|^\frac12 |D|^\frac12 \phi\|_{L^2}^2
- \| \chi \phi_x\|_{L^2}^2 = \langle ( T_1+T_2) \phi,\phi \rangle
\]
where 
\[
T_1 = |D| \chi^2 |D| + \partial_x \chi^2 \partial_x, \qquad T_2 =   \chi^2 x |D| + |D|\chi^2 x - 2  |D|^\frac12 \chi^2 x  |D|^\frac12.
\]
Then it suffices to show that
\begin{equation}
\label{t12}
|\langle T_{1,2} \phi,\phi \rangle| \lesssim \| \phi\|_{L^\infty}^2 + \| \partial^{-1} \phi\|_{L^\infty}^2.
\end{equation}
To achieve this we estimate the kernels $K_{1,2}$ of $T_{1,2}$. In order to compute the kernels $K_1$ and $K_2$ we observe that both $T_1$ and $T_2$ have
a commutator structure
\begin{equation}
T_1 =\partial_x \left[ \left[ \chi ^2 \, , \, H\right] \, , \, H \right] \partial_x, \quad T_2= \left[ \left[ \vert D\vert ^{\frac{1}{2}}\, , \, \chi^2 \right] \, , \, \vert D\vert ^{\frac{1}{2}}\right] .
\end{equation}
We first consider $T_1$ for which we claim that its kernel $K_1$ satisfies the bound 
\begin{equation}
\label{k1}
|K_1(x,y)| \lesssim \frac{1}{(1+|x|)(1+|y|)(1+|x|+|y|)}.
\end{equation}
This suffices for the estimate \eqref{t12}.  

To prove \eqref{k1} we observe that instead of analyzing the kernel $K_1(x, y)$, we can analyze the kernel $\tilde{K}_1$: 
\[
K_1(x,y)=\partial_x\partial_y\tilde{K}_{1}(x,y),
\] 
where $\tilde{K}_1$ is the corresponding kernel of the commutator $\left[ \left[ \chi ^2 \, , \, H\right]\,, \, H\right] $, and is given by
\[
\tilde{K}_1(x,y)=\int \frac{\chi^2 (x)- \chi^2 (y)}{x-z} \cdot \frac{1}{z-y}-\frac{\chi^2 (z)- \chi^2 (y)}{z-z} \cdot \frac{1}{x-z}\, dz.
\]
We can rewrite $\tilde{K}_1$ using the symmetry $z\rightarrow x+y-z$
\[
\tilde{K}_1(x,y)=\int \frac{\chi^2 (x)+ \chi^2 (y)  -\chi^2(z) -\chi^2 (x+y-z)}{(x-z)(y-z)}\,  dz.
\]

Secondly, in a similar fashion, we compute the kernel $K_2$ of $T_2$, 
\begin{equation}
\label{k2}
K_2(x,y) =\int \frac{\chi^2 (x) +\chi^2 (y)-\chi^2 (x+y-z)-\chi^2(z)  }{\vert x-z\vert ^{\frac{3}{2}} \vert y-z \vert ^{\frac{3}{2}}}\, dz,
\end{equation}
where again the numerator vanishes of order one at $x=z$ and $y=z$.
For this kernel we distinguish two regions: 
\begin{itemize}
\item $\vert x\vert +\vert y\vert  \lesssim 1$; in this region a direct computation shows that the kernel $K_2$ has a mild logarithmic singularity on the diagonal $x=y$,
\[
\vert K_2 (x,y)\vert \leq 1+ \vert \log \vert x-y\vert \vert .
\] 
\item $\vert x\vert +\vert y\vert  \gg 1$; in this region the kernel $K_2$ is smooth and can be shown to satisfy the bound
\[
 |K_2^{low}(x,y)| \lesssim \frac{ (1+ \min\{|x|,|y|\})^\frac12}{  (1+|x|+|y|)^\frac32} .
\]
This does not suffice for the bound \eqref{t12}. However after differentiation it improves to 
\[
|\partial_x \partial_y K_2^{low}(x,y)| \lesssim \frac{1}{ (1+ \min\{|x|,|y|\})^\frac12 (1+|x|+|y|)^\frac52} ,
\]
and that is enough to obtain \eqref{t12}.
\end{itemize}

\end{document}